\documentclass[a4paper,11pt]{amsart}

% Packages

\usepackage[utf8]{inputenc}		% font and encoding
\usepackage[T1]{fontenc}
\usepackage[english]{babel}

\usepackage{amsfonts}			% ams-packages
\usepackage{amsmath}
\usepackage{amssymb}
\usepackage{amsthm}

\usepackage{graphicx}			% images and drawing
\usepackage{tikz}
\usetikzlibrary{cd}

\usepackage{hyperref}			% hyperlinks
\hypersetup{colorlinks=false, urlcolor=black, linkcolor=black}
\usepackage{cleveref}

\usepackage{enumitem}			% arbitrary list items

\usepackage{color}				% colored text for signaling changes
\usepackage{float}

\usepackage{thmtools}
\usepackage{thm-restate}

\usepackage{mathrsfs}

%Symbols
						% Nonnegative integers
\newcommand{\Z}{\mathbb{Z}}						% Integers
						% Rationals
\newcommand{\R}{\mathbb{R}}						% Reals
\newcommand{\C}{\mathbb{C}}						% Complex numbers

					% Probability measures

\renewcommand{\S}{\mathbb{S}}					% Sphere
						% Torus
						% Disk
\newcommand{\B}{\mathbb{B}}
					% Halfspace

						% Domains

					% Big O

				% Julia set
				% Fatou set

\newcommand{\eps}{\varepsilon}					% Epsilon shortcut
				% De Rham cohomology

\newcommand{\dd}								% Differential d
{\mathop{}\!\mathrm{d}}						
\newcommand{\ddn}[1]							% Powers of a differential d
{\mathop{}\!\mathrm{d^{#1}}}

\newcommand{\abs}[1]							% Absolute value
{\left| #1 \right|}
\newcommand{\smallabs}[1]						% Small absolute value bars which won't scale to the argument.
{\lvert #1 \rvert}	
\newcommand{\norm}[1]							% Norm 
{\left\lVert #1 \right\rVert}	
\newcommand{\smallnorm}[1]						% Small norm bars which won't scale to the argument.
{\lVert #1 \rVert}						
\newcommand{\ip}[2]								% Inner product
{\left< #1 , #2 \right>}

%Operators
					% Identity
					% Projections
\DeclareMathOperator{\intr}{int}				% Interior
\DeclareMathOperator{\vol}{vol}					% Volume (and the volume form)
					% Torsion operator/subgroup
\DeclareMathOperator{\spt}{spt}					% Support
					% Trace
\DeclareMathOperator{\diam}{diam}				
		% Essential supremum

\DeclareMathOperator{\capac}{Cap}	
\DeclareMathOperator{\adm}{Adm}		
					% Graph

\DeclareMathOperator{\dist}{dist}

									% Redefine real and imaginary part operators

					% Note: Imaginary part (pair is \Re).
					% Note: Image (pair is \ker).

%Other math notation
					% Pushforward (has better spacing in some contexts).

\newcommand{\loc}{\mathrm{loc}}

\renewcommand{\phi}{\varphi}

% The following is apparently standard code for an average integral symbol (taken from Pakin's "The Comprehensive LaTeX symbol list", which attributes it to the UK TEX Users’ Group FAQ).
% Is there a standard package which just adds this? If not, why on earth there isn't?
\def\Xint#1{\mathchoice
	{\XXint\displaystyle\textstyle{#1}}%
	{\XXint\textstyle\scriptstyle{#1}}%
	{\XXint\scriptstyle\scriptscriptstyle{#1}}%
	{\XXint\scriptscriptstyle\scriptscriptstyle{#1}}%
	\!\int}
\def\XXint#1#2#3{{\setbox0=\hbox{$#1{#2#3}{\int}$}
		\vcenter{\hbox{$#2#3$}}\kern-.5\wd0}}

\def\dashint{\Xint-}

%Theorem environments
\newtheorem{thm}{Theorem}[section]{\bf}{\it}
\newtheorem{lemma}[thm]{Lemma}
\newtheorem{prop}[thm]{Proposition}
\newtheorem{cor}[thm]{Corollary}

{\bf}{\it}

{\bf}{\it}
\newenvironment{customthm}[1]
{\innercustomthm}
{\endinnercustomthm}

{\bf}{\it}

{\bf}{\it}

\theoremstyle{definition}
\newtheorem{defn}[thm]{Definition}

\theoremstyle{remark}

%Equations
\numberwithin{equation}{section}

\begin{document}
	
	\title{Linear distortion and rescaling for quasiregular values}

	\author[I. Kangasniemi]{Ilmari Kangasniemi}
	\address{Department of Mathematical Sciences, University of Cincinnati, P.O.\ Box 210025, 
			Cincinnati, OH 45221, USA.}
	\email{kangaski@ucmail.uc.edu}
	
	\author[J. Onninen]{Jani Onninen}
	\address{Department of Mathematics, Syracuse University, Syracuse,
		NY 13244, USA and  Department of Mathematics and Statistics, P.O.Box 35 (MaD) FI-40014 University of Jyv\"askyl\"a, Finland
	}
	\email{jkonnine@syr.edu}
	
	\subjclass[2020]{Primary 30C65; Secondary 35R45}
	\date{\today}
	\keywords{Quasiregular values, linear distortion, rescaling principle}
	\thanks{I. Kangasniemi was supported by the National Science Foundation grant DMS-2247469. J. Onninen was supported by the National Science Foundation grant DMS-2154943. }
	
\begin{abstract}
		Sobolev mappings exhibiting only pointwise quasiregularity-type bounds have arisen in various applications, leading to a recently developed theory of quasiregular values. In this article, we show that by using rescaling, one obtains a direct bridge between this theory and the classical theory of quasiregular maps. 
		More precisely,  we  prove  that a non-constant mapping $f \colon \Omega \to \mathbb{R}^n$ with a $(K, \Sigma)$-quasiregular value at $f(x_0)$ can be rescaled at $x_0$ to a non-constant $K$-quasiregular mapping.
		Our proof of this fact involves  establishing a quasiregular values -version of the linear distortion bound of quasiregular mappings. A quasiregular values variant of the small $K$ -theorem is obtained as an immediate  corollary of our main result.
	\end{abstract}
	
	\maketitle
	
	\section{Introduction}
	
	Let $\Omega$ be a domain in $\R^n$ with $n \ge 2$ and $y_0 \in \R^n$.  A mapping $f \colon \Omega \to \R^n$ in the Sobolev class $W^{1,n}_\loc(\Omega, \R^n)$ has a \emph{$(K, \Sigma)$-quasiregular value at $y_0$} if it satisfies the inequality 
	\begin{equation}\label{eq:qrval_def}
		\abs{Df(x)}^n \leq K J_f(x) + \abs{f(x)-y_0}^n \Sigma(x)
	\end{equation}
	for almost every (a.e.) $x \in \Omega$, where $K \geq 1$ is a constant and $\Sigma$ is a nonnegative function in $L^{1+\eps}_\loc(\Omega)$ for some $\eps >0$.  Here, $\abs{Df(x)}$ denotes the operator norm of the weak derivative of $f$ at $x$, and $J_f = \det Df$ is the Jacobian determinant of $f$.
 	The higher integrability of $\Sigma$ yields that a map $f \in W^{1,n}_\loc(\Omega, \R^n)$ with a $(K, \Sigma)$-quasiregular value has a continuous representative; see \cite[Theorem 1.1]{Kangasniemi-Onninen_Heterogeneous} or \cite{Dolezalova-Kangasniemi-Onninen_MGFD-cont} for details. In this paper, we always refer to the continuous representatives of these maps unless otherwise noted.
	
 	The classical analytic definition of a \emph{$K$-quasiregular} map is exactly the case $\Sigma \equiv 0$ of~\eqref{eq:qrval_def}. Of particular note is the sub-class of homeomorphic quasiregular maps, which are called \emph{quasiconformal maps}. The concept of quasiregular mappings emerged as a necessity to extend the geometric principles of holomorphic functions of one complex variable into higher dimensions, and is by now a central topic in modern analysis. Undoubtedly, the theory of arbitrary solutions of~\eqref{eq:qrval_def} is significantly more complicated.  Heuristically speaking,~\eqref{eq:qrval_def} allows $f$ to deviate from the standard behavior of quasiregular maps in a manner controlled by $\Sigma$, where these deviations  incur greater penalties as $f$ approaches $y_0$.  Away from $y_0$, these solutions behave similarly to an arbitrary Sobolev map. 
 
  	In the planar case, the equation~\eqref{eq:qrval_def} can be expressed as a  linear  Cauchy-Riemann type system, and is thus uniformly elliptic. If $n = 2$, then a solution $f$ can be decomposed using the existence theory of Beltrami equations into
 	\begin{equation}\label{eq:2d_decomposition}
  		f(x) = e^{\theta(x)} g(x) + f(x_0)
  	\end{equation}
   	with $\theta \colon \Omega \to \C$ continuous and $g \colon \Omega \to \C$ quasiregular, see~\cite{Astala-Iwaniec-Martin_Book}.
  	A notable application of this class of mappings arose when Astala and P\"aiv\"arinta employed it in their solution of the planar Calder\'on problem \cite{Astala-Paivarinta}. Moreover, solutions derived from \eqref{eq:qrval_def} also serve as pivotal components in various other uniqueness theorems; see the book by Astala, Iwaniec and Martin~\cite{Astala-Iwaniec-Martin_Book} for further details. When $n \ge 3$, the theory is instead nonlinear, and the main inherent difficulty lies in the lack of general existence theorems.

 	Despite the challenges inherent in the higher dimensional version of the theory, we have recently managed to show that mappings with quasiregular values satisfy single-value counterparts to many of the foundational results of quasiregular maps, under suitable regularity assumptions on $\Sigma$. These results include versions of Reshetnyak's open mapping theorem \cite{Kangasniemi-Onninen_1ptReshetnyak},  the Liouville uniqueness theorem \cite{Kangasniemi-Onninen_Heterogeneous, Kangasniemi-Onninen_Heterogeneous-corrigendum}, and even Rickman's Picard theorem \cite{Kangasniemi-Onninen_Picard} on the number omitted values of entire quasiregular maps. By our variant of Reshetnyak's theorem, if a non-constant map $f$ has a quasiregular value at $y_0$, then $f^{-1} \{y_0\}$ is a discrete set and the local index $i(x, f)$ is positive at every $x \in f^{-1}\{y_0\}$. Nevertheless, it's crucial to understand that a map $f$ satisfying~\eqref{eq:qrval_def} needs not be locally quasiregular even in any neighborhood of~$f^{-1} \{y_0\}$. In fact, it is possible that such a neighborhood always meets a region where $J_f <0$, see~\cite[Example 7.2]{Kangasniemi-Onninen_1ptReshetnyak}.
	
	Our main result in this work is a version of the rescaling principle for quasiregular values. In particular, we show that if $f \colon \Omega \to \R^n$ has a $(K, \Sigma)$-quasiregular value at $f(x_0)$ with $\Sigma \in L^{1+\eps}_\loc(\Omega)$, then $f$ can be rescaled into a $K$-quasiregular map. For the statement, we recall that if $h, h_j \in X$ where $X$ is a Banach space, we say that \emph{$h_j$ converges to $h$ weakly in $X$} if $\Phi(h_j) \to \Phi(h)$ for every bounded functional $\Phi \in X^*$. Also, we denote the open unit ball centered at the origin in $\R^n$ by $\B^n$.
	
	\begin{thm}\label{thm:rescaling}
		Let $\Omega \subset \R^n$ be a domain and  $x_0 \in \Omega$. Suppose that $f \colon \Omega \to \R^n$ is a non-constant continuous map having a $(K, \Sigma)$-quasiregular value at $f(x_0)$, where $K \geq 1$ and $\Sigma \in L^{1+\eps}_\loc(\Omega)$ with $\eps > 0$. Then there exist radii $r_j > 0$ and scaling factors $c_j > 0$, $j \in \Z_{> 0}$, such that the functions $h_j \colon \B^n \to \R^n$ defined by
		\[
			h_j(x) = c_j(f(x_0 + r_j x) - f(x_0))
		\]
		converge both locally uniformly and weakly in $W^{1,n}(\B^n, \R^n)$ to a non-constant $K$-quasiregular map $h \colon \B^n \to \R^n$.
	\end{thm}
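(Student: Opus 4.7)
The plan is to make everything follow from a linear-distortion estimate for maps with a quasiregular value. Heuristically, a point $x_0$ where $f$ has a $(K, \Sigma)$-quasiregular value should look quasiregular on small scales, because the $\Sigma$-term in \eqref{eq:qrval_def} carries the factor $|f - f(x_0)|^n$ that becomes negligible as one zooms in on $x_0$. This suggests that, after the right normalisation, a rescaling of $f$ at $x_0$ should converge to a genuine $K$-quasiregular map. The key technical task, and the clear main obstacle, is to establish a Mori-type distortion bound for quasiregular values:
$$L(x_0, 2r, f) \le C(K, n) \, L(x_0, r, f) \quad \text{for all sufficiently small } r > 0,$$
where $L(x_0, r, f) := \max_{|y - x_0| \le r} |f(y) - f(x_0)|$. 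I would prove this by adapting the classical capacity/modulus proof of linear distortion for quasiregular maps: replace the global capacity estimate by a Caccioppoli inequality derived from \eqref{eq:qrval_def} with a suitable radial cutoff, and control the $\Sigma$-contribution on $B(x_0, r)$ by H\"older's inequality and the higher integrability $\Sigma \in L^{1+\eps}_\loc$, so that this error is of order $r^{n\eps/(1+\eps)} L(x_0, r, f)^n$ and hence negligible against the main term as $r \to 0$. Discreteness of $f^{-1}\{f(x_0)\}$ and positivity of the local index at $x_0$, provided by the Reshetnyak-type theorem for quasiregular values cited above, ensure that $L(x_0, r, f) > 0$ for all small $r$ so that the bound is meaningful.

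Once the distortion bound is in hand, I would pick any $r_j \searrow 0$ with $\overline{B(x_0, r_j)} \subset \Omega$ and set $c_j = 1/L(x_0, r_j/2, f)$, so that by construction $\max_{|x| \le 1/2} |h_j(x)| = 1$. Iterating the distortion bound gives $\|h_j\|_{L^\infty(\B^n)} \le C$ uniformly. A direct change of variables shows that each $h_j$ has a $(K, \tilde\Sigma_j)$-quasiregular value at $0$, with
$$\tilde\Sigma_j(x) = r_j^n \Sigma(x_0 + r_j x), \qquad \|\tilde\Sigma_j\|_{L^{1+\eps}(\B^n)} = r_j^{n\eps/(1+\eps)} \|\Sigma\|_{L^{1+\eps}(B(x_0, r_j))} \xrightarrow{j \to \infty} 0.$$
A standard Caccioppoli argument --- integrating $\phi^n J_{h_j}$ by parts against the wedge-product identity for the Jacobian and using Young's inequality on the $\Sigma$-term --- then yields a uniform $W^{1,n}$ bound for $h_j$ on each compact subset of $\B^n$. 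After extracting a subsequence, $h_j \rightharpoonup h$ weakly in $W^{1,n}(\B^n, \R^n)$. Equicontinuity of $\{h_j\}$ is obtained by combining the distortion bound, applied to each $h_j$ at its own quasiregular value $0$, with a Morrey-type oscillation estimate coming from the same Caccioppoli inequality; Arzel\`a--Ascoli then upgrades the weak convergence to local uniform convergence.

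It remains to identify $h$ as non-constant and $K$-quasiregular. Non-constancy passes to the uniform limit from $\max_{\overline{B(0, 1/2)}} |h_j| = 1$. For quasiregularity, I would pass $J_{h_j}$ to $J_h$ in the distributional sense via the null-Lagrangian structure of the Jacobian, using the uniform $L^\infty$ bound together with weak $W^{1,n}$ convergence, and then combine this with lower semicontinuity of $\int |Dh|^n$ under weak convergence and with $\tilde\Sigma_j \to 0$ in $L^{1+\eps}$ to conclude $|Dh|^n \le K J_h$ a.e. The main obstacle is unambiguously the first step: standard proofs of linear distortion for quasiregular maps exploit pointwise quasiregularity throughout, whereas here only the single value $f(x_0)$ is controlled, and the $\Sigma$-term has to be absorbed without destroying the scale invariance required to compare $L(x_0, 2r, f)$ with $L(x_0, r, f)$.
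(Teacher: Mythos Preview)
Your overall architecture matches the paper's: rescale, obtain uniform bounds, extract a limit via Arzel\`a--Ascoli and weak $W^{1,n}$ compactness, and verify the limit is $K$-quasiregular through distributional Jacobian convergence and lower semicontinuity of $\int|Dh|^n$. Your normalisation $c_j = 1/L(x_0, r_j/2, f)$ is a genuinely different and in one respect cleaner choice: it makes non-constancy automatic from $\max_{\overline{B(0,1/2)}}|h_j|=1$ and $h_j(0)=0$, whereas the paper normalises by $L_f(x_0,r_j)$ so that $\|h_j\|_{L^\infty(\B^n)}=1$ is free but non-constancy requires a separate capacity/Loewner argument with a strategically chosen $r_j = l_f^*(x_0,\rho_j)$. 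Your Caccioppoli route to local $W^{1,n}$ bounds from an $L^\infty$ bound is also more elementary than the paper's Lemma~6.1, which instead ties $\|Dh_r\|_{L^n(\B^n)}$ directly to the linear distortion $L_f/l_f$.

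The gap is in the Mori bound $L(x_0,2r,f)\le C\,L(x_0,r,f)$ itself. You correctly flag it as the main obstacle, but you misidentify what makes it hard. Absorbing the $\Sigma$-term is routine --- your own estimate $\int_{B_r}|f-f(x_0)|^n\Sigma \lesssim r^{n\eps/(1+\eps)}L(x_0,r,f)^n$ is exactly right and harmless. The real obstruction is topological: the classical capacity/modulus argument tacitly uses that $f$ is open and discrete, so that $\R^n\setminus U_f(x_0,\rho)$ is connected and capacity comparisons between domain and image rings make sense. For a map with only a single quasiregular value, $f$ need not be open, and $\R^n\setminus U_f(x_0,\rho)$ can have bounded components on which $f$ may wander far from $f(x_0)$. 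The paper spends all of Section~5 on exactly this issue (Lemmas~5.1 and~5.2), combining degree theory, the Loewner property, logarithmic higher integrability of $|Df|/|f-f(x_0)|$ via Gehring's lemma, and positivity of $p$-capacity of points for $p>n$, before it can establish its distortion bound $\limsup_{r\to 0} L_f(x_0,r)/l_f(x_0,r)\le C$ (Theorem~1.2). Your doubling bound is not obviously implied by, nor obviously easier than, this $L/l$ bound, and any proof of it will have to confront the same bounded-components phenomenon; a Caccioppoli inequality with a radial cutoff will not by itself produce a lower bound on $L(x_0,r,f)$, which is what the doubling estimate really encodes.
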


	This result is again relatively easy to see in the planar case, thanks to the planar decomposition~\eqref{eq:2d_decomposition}. Indeed, an appropriate rescaling will hence make the $e^{\theta}$-coefficient approach a constant value by continuity of $\theta$, allowing one to reduce the result to a similar rescaling principle for quasiregular maps. However, as is common in the theory of quasiregular values, the question is much more involved in higher dimensions due to a lack of a counterpart for this decomposition.
	
	\subsection{Linear distortion}
	
	Our main tool in achieving Theorem \ref{thm:rescaling} is a linear distortion bound for mappings with quasiregular values. That is, given a continuous map $f \colon \Omega \to \R^n$, a point $x_0 \in \R^n$, and a radius $r > 0$ such that $\overline{\B^n}(x_0, r) \subset \Omega$, we let
	\begin{equation}\label{eq:L_and_l}
		\begin{aligned}
		L_f(x_0, r) &= \sup \left\{ \abs{y - f(x_0)} : y \in f \B^n(x_0, r)\right\},\\
		l_f(x_0, r) &= \inf \left\{ \abs{y - f(x_0)} : y \notin f \B^n(x_0, r)\right\}.
		\end{aligned}
	\end{equation}
	If $f$ is a $K$-quasiregular map, then $\limsup_{r \to 0} L_f(x_0, r)/l_f(x_0, r)$ is bounded from above by a constant only dependent on $n$, $K$, and the local index $i(x_0, f)$. An explicit bound for quasiconformal maps was given by Gehring~\cite{Gehring_TAMS} and for quasiregular maps by Martio, Rickman and V\"ais\"al\"a~\cite[Theorem~4.5]{Martio-Rickman-Vaisala_AASF-QR1}.  We prove a counterpart of this result for quasiregular values.
	
	\begin{thm}\label{thm:limsup_lin_distortion}
		Let $\Omega \subset \R^n$ be a domain, let $x_0 \in \Omega$, and let $f \in W^{1,n}_\loc(\Omega, \R^n)$ be a non-constant continuous map. Suppose that $f$ has a $(K, \Sigma)$-quasiregular value at $f(x_0)$, where $K \geq 1$ and $\Sigma \in L^{1+\eps}_\loc(\Omega)$ with $\eps > 0$. Then
		\[
			\limsup_{r \to 0} \frac{L_f(x_0, r)}{l_f(x_0, r)} \leq C(n, K, i(x_0, f)) < \infty.
		\]
	\end{thm}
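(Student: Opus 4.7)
The plan is to adapt the classical Martio--Rickman--V\"ais\"al\"a distortion argument, which for genuine $K$-quasiregular maps reduces the bound on $L_f/l_f$ to a comparison of ring-capacity moduli via the KO path-modulus inequality. In the quasiregular-values setting, the role of the KO inequality will be played by a perturbed version with an extra error term involving $\Sigma$ and the target scale $L$. Normalizing $f(x_0)=0$ and writing $L=L_f(x_0,r)$, $l=l_f(x_0,r)$ for small $r$, continuity of $f$ gives $L\to 0$ as $r\to 0$. Consequently, on $\B^n(x_0,r)$ the defining inequality~\eqref{eq:qrval_def} reduces to the perturbed pointwise bound $\abs{Df}^n\le K\,J_f+L^n\,\Sigma$ a.e., in which the factor $L^n$ provides the small parameter that will absorb the $\Sigma$-contribution in the limit.

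Next I would invoke the quasiregular-values Reshetnyak open mapping theorem from~\cite{Kangasniemi-Onninen_1ptReshetnyak} to conclude that $f$ is open, discrete, and sense-preserving near $x_0$ with finite positive local index $i(x_0,f)$. This yields, for each sufficiently small $s\in(l,L]$, a normal neighbourhood $U_s\Subset\B^n(x_0,r)$ of $x_0$ with $f(U_s)=\B^n(0,s)$ and $\deg(f|_{U_s})=i(x_0,f)$, together with the multiplicity inequality
\[
\int_{U_s}(\rho\circ f)^n\,J_f\,\dd x\le i(x_0,f)\int_{\B^n(0,s)}\rho^n\,\dd y
\]
for every nonnegative Borel $\rho$. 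Given a path family $\Gamma$ in $\B^n(0,s)$ and an admissible $\rho\in\adm(\Gamma)$, the composition $(\rho\circ f)\abs{Df}$ is admissible for the family $\Gamma^\sharp$ of $f$-liftings in $U_s$; integrating the perturbed pointwise inequality and applying the multiplicity bound yields the perturbed modulus inequality
\[
M(\Gamma^\sharp)\le K\,i(x_0,f)\,M(\Gamma)+L^n\int_{U_s}(\rho\circ f)^n\,\Sigma\,\dd x.
\]
H\"older's inequality with exponents $1+\eps$ and $(1+\eps)/\eps$ then bounds the error by $C\,L^n\,\norm{\Sigma}_{L^{1+\eps}(U_s)}\cdot M(\Gamma)^{\eps/(1+\eps)}$, which is $o(M(\Gamma))$ as $r\to 0$.

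To conclude, I would apply this with $\Gamma$ the family of paths in the spherical ring $\{l<\abs{y}<s\}$, for which $M(\Gamma)=\omega_{n-1}(\log(s/l))^{1-n}$; the lifted family $\Gamma^\sharp$ separates $\partial U_l$ from $\partial U_s$ inside $U_s\setminus\overline{U_l}$, and taking $s$ close to $L$ ensures that $\overline{U_s}$ reaches $\partial\B^n(x_0,r)$, giving via a standard Loewner/capacity argument a lower bound $M(\Gamma^\sharp)\ge c_n>0$ that is uniform in $r$. Plugging this into the modulus inequality and absorbing the error term yields $\log(L/l)\le C(n,K,i(x_0,f))$, the desired bound. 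The main obstacle is the middle step: establishing the perturbed modulus inequality with an error that genuinely vanishes requires both the higher integrability of $\Sigma$ and a careful choice of admissible metric; moreover, because $f$ is not quasiregular on $U_s$, the multiplicity inequality must be derived directly from the open/discrete/sense-preserving structure of $f|_{U_s}$ rather than by invoking any existing quasiregular change-of-variables formula.
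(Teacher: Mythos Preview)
Your proposal rests on a misreading of the single-point Reshetnyak theorem. That result does \emph{not} give that $f$ is open, discrete, and sense-preserving near $x_0$; it only says that the fiber $f^{-1}\{f(x_0)\}$ is discrete, that $i(x_0,f)>0$, and that $f(x_0)\in\intr f(U)$ for every neighborhood $U$ of $x_0$. In fact $f$ may have $J_f<0$ arbitrarily close to $x_0$ and need not be open or discrete on any neighborhood; see \cite[Example~7.2]{Kangasniemi-Onninen_1ptReshetnyak}. Consequently path-lifting from $\B^n(0,s)$ to $U_s$ is unavailable and the V\"ais\"al\"a-type inequality cannot be set up: there is no lifted family $\Gamma^\sharp$, and the admissibility of $(\rho\circ f)\abs{Df}$ has no meaning. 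The paper circumvents this entirely by working with conformal capacity and the test function built from $\log\abs{f-f(x_0)}$ (Lemma~\ref{lem:basic_adm_function_estimate}), never invoking path families.

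More importantly, the step you call a ``standard Loewner/capacity argument'' is exactly where the genuine difficulty lies, and you are checking the wrong condition. For a lower bound on $\capac(\overline{U_{\rho_1}},U_{\rho_2})$ with $\rho_1$ slightly above $l$ and $\rho_2$ slightly below $L$, one needs both $\overline{U_{\rho_1}}$ and the \emph{unbounded component} of $\R^n\setminus U_{\rho_2}$ to meet $\partial\B^n(x_0,r)$. The first is easy (Corollary~\ref{cor:inner_radius_lemma}); your observation that $\overline{U_s}$ reaches the sphere is this same easy direction again. The second is the crux: because $f$ is not open, $\R^n\setminus U_{\rho_2}$ may have bounded components, and Lemma~\ref{lem:inner_and_outer_radius_prelemma} only guarantees that \emph{some} component meets $\B^n(x_0,r)$. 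Forcing it to be the unbounded one is the content of Section~\ref{sect:bounded_components} (Lemma~\ref{lem:the_important_distortion_lemma} and Lemma~\ref{lem:outer_distortion_lemma_with_QRvar}), whose proof combines a Loewner roundness estimate, Gehring's lemma for $\abs{Df}/\abs{f-f(x_0)}$, the positivity of $p$-capacity of a point for $p>n$, and the degree identity $\int_V J_f/\abs{f-f(x_0)}^n=0$ over bounded components $V$. This is the missing idea in your outline; the ``main obstacle'' you flag (the perturbed modulus inequality) is not the actual bottleneck.
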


	The main difficulty in proving Theorem \ref{thm:limsup_lin_distortion} is caused by the fact that, as opposed to the theory of quasiregular maps, a map with just a quasiregular value at $f(x_0)$ can have bounded components in the complement of a standard normal neighborhood of $x_0$. The hard part is hence obtaining control on these bounded components, which we achieve in Section \ref{sect:bounded_components} through a somewhat intricate argument combining degree theory, the Loewner property of $\R^n$, Gehring's Lemma, and the "pseudo-logarithm" $\R^n \setminus \{0\} \to \R \times \S^{n-1}$ considered also in e.g.\ \cite{Kangasniemi-Onninen_Heterogeneous, Kangasniemi-Onninen_Heterogeneous-corrigendum, Kangasniemi-Onninen_Picard}.
	
	\subsection{Small $K$ -theorem.}
	
	As a near-immediate consequence of Theorem \ref{thm:rescaling}, we obtain a small $K$ -theorem for maps with quasiregular values. 
	
	We recall that the small $K$ -theorem of quasiregular maps states that there exists a constant $K_0 = K_0(n) > 1$ such that if $n \geq 3$ and $f \colon \Omega \to \R^n$ is a $K$-quasiregular map with $K < K_0$, then $f$ is a local homeomorphism. The existence of such a $K_0$ can be most easily shown by indirect means, via normal family methods: see e.g.\ \cite{Iwaniec_abstract-small-K}. It is conjectured that $K_0 = 2^{n-1}$, with the winding map as the extremal example. The current best result, which provides a concrete lower bound on $K_0$ that is regardless very far from $2^{n-1}$, is due to Rajala \cite[Theorem 1.1]{Rajala_smallK}. We call this $K_0$ \emph{Rajala's constant}. The very closely related corresponding conjecture for the inner distortion of quasiregular maps is known as the \emph{Martio conjecture}, and is one of the major open problems of the theory of quasiregular maps; see e.g.\ \cite{Tengvall_Martio-conj-remarks} for details.
	
	Our version of the small $K$ -theorem for quasiregular values is as follows.
	
	\begin{thm}\label{thm:small_K}
		Let $\Omega \subset \R^n$ be a domain, let $x_0 \in \Omega$, and let $f \in W^{1,n}_\loc(\Omega, \R^n)$ be a non-constant continuous map. Suppose that $f$ has a $(K, \Sigma)$-quasiregular value at $f(x_0)$, where $\Sigma \in L^{1+\eps}_\loc(\Omega)$ with $\eps > 0$. If $K < K_0(n)$, where $K_0(n)$ is Rajala's constant, then $i(x_0, f) = 1$.
	\end{thm}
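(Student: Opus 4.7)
The plan is to combine Theorem \ref{thm:rescaling} with Rajala's small $K$ theorem via a degree-theoretic comparison. I would first apply Theorem \ref{thm:rescaling} to obtain radii $r_j > 0$ and scaling factors $c_j > 0$ such that the maps $h_j(x) = c_j(f(x_0 + r_j x) - f(x_0))$ converge locally uniformly and weakly in $W^{1,n}(\B^n, \R^n)$ to a non-constant $K$-quasiregular map $h \colon \B^n \to \R^n$. Since $h_j(0) = 0$ for every $j$, the limit satisfies $h(0) = 0$. Because $K < K_0(n)$, Rajala's small $K$ theorem applied to $h$ ensures that $h$ is a local homeomorphism; hence there exists $\rho \in (0, 1)$ such that $h$ is injective on $\overline{\B^n}(0, \rho)$, $\delta := \min\{|h(x)| : x \in \partial \B^n(0, \rho)\} > 0$, and $\deg(h, \B^n(0, \rho), 0) = 1$.

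The next step is to identify $i(x_0, f)$ with the local index of $h_j$ at the origin. Writing $\phi_j(x) = x_0 + r_j x$ and $\psi_j(y) = c_j(y - f(x_0))$, both of which are orientation-preserving homeomorphisms, we have $h_j = \psi_j \circ f \circ \phi_j$. The invariance of the local topological degree under pre- and post-composition with such homeomorphisms then gives $i(0, h_j) = i(x_0, f)$ for every $j$. Moreover, $h_j^{-1}\{0\} = \phi_j^{-1}(f^{-1}\{f(x_0)\})$ is a discrete subset of $\B^n$ by the Reshetnyak-type theorem for quasiregular values of \cite{Kangasniemi-Onninen_1ptReshetnyak}, and the same result ensures that $i(x, h_j) \geq 1$ for every $x \in h_j^{-1}\{0\}$.

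Finally, I would match the two degree computations. For all sufficiently large $j$, local uniform convergence gives $\smallnorm{h_j - h}_{L^\infty(\overline{\B^n}(0, \rho))} < \delta/2$, so the straight-line homotopy from $h$ to $h_j$ avoids $0$ on $\partial \B^n(0, \rho)$. Homotopy invariance of the degree then yields $\deg(h_j, \B^n(0, \rho), 0) = \deg(h, \B^n(0, \rho), 0) = 1$. Decomposing this degree as the sum $\sum_{x \in h_j^{-1}\{0\} \cap \B^n(0, \rho)} i(x, h_j)$ and using that each summand is at least $1$ together with $i(0, h_j) = i(x_0, f)$ yields $1 \geq i(x_0, f) \geq 1$, so $i(x_0, f) = 1$. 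The main analytic difficulty has already been absorbed into Theorem \ref{thm:rescaling}; the only subtle point remaining is the verification of the identity $i(0, h_j) = i(x_0, f)$ under the rescaling, which follows from the standard homeomorphism invariance of the topological degree.
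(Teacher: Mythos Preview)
Your proposal is correct and follows essentially the same route as the paper: apply Theorem~\ref{thm:rescaling}, invoke Rajala's theorem to get $i(0,h)=1$, and then transfer this to $i(x_0,f)$ via homotopy invariance of the degree together with $i(0,h_j)=i(x_0,f)$. The only cosmetic difference is in the last step: the paper chooses $j$ large enough that $\B^n(x_0,r_j)$ sits inside a normal neighborhood of $x_0$, forcing $h_j^{-1}\{0\}=\{0\}$ so that the summation formula has a single term, whereas you allow several preimages and use positivity of all local indices from the single-point Reshetnyak theorem to squeeze $i(x_0,f)$ between $1$ and $1$.
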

	
	Note that we do \emph{not} obtain that $f$ is a homeomorphism in some neighborhood of $x_0$; indeed, as already pointed out, such a result is not possible by \cite[Example 7.2]{Kangasniemi-Onninen_1ptReshetnyak}. For quasiregular values, the small $K$ -theorem hence only provides an infinitesimal form of injectivity, not a local form of injectivity.
	
	\subsection*{Acknowledgments}
	
	The authors thank  Nageswari Shanmugalingam for several discussions that helped in improving the paper.

	\section{Preliminaries on classical topics}
	
	\subsection{Degree and local index} 
	
	If $U$ is open, $\overline{U}$ is a compact subset of $\Omega$, and $f \colon \overline{U} \to \R^n$ is continuous, then the \emph{topological degree} $\deg(f, y, U) \in \Z$ is well-defined for all $y \in \R^n \setminus f(\partial U)$. Since the full definition of the topological degree is relatively technical, we elect not to state it here; see e.g.\ \cite[Chapter~1]{Fonseca-Gangbo-book} for the details. Instead, we recall the basic properties of the topological degree that we use; for proofs, see e.g.\ \cite[Theorems 2.1, 2.3 (3), 2.7]{Fonseca-Gangbo-book}.
	
	\begin{lemma}\label{lem:top_degree_props}
		Let $\Omega \subset \R^n$ be an open set, let $f \in C(\Omega, \R^n)$, and let $U \subset \Omega$ be open and compactly contained in $\Omega$. The topological degree $\deg(f, \cdot, U)$ satisfies the following properties. 
		\begin{enumerate}
			\item \label{enum:deg_constant} (Local constancy) The mapping $y \mapsto \deg(f, y, U)$ is locally constant in $\R^n \setminus f \partial U$.
			\item \label{enum:deg_image} (Vanishing outside the image set) If $y \in \R^n \setminus f \partial U$ and $\deg(f, y, U) \neq 0$, then $y \in f U$.
			\item \label{enum:deg_addition} (Additivity) If $U = \bigcup_{i \in I} U_i$, where $I$ is at most countable, $U_i$ are mutually disjoint open sets,  and $y \in \R^n \setminus \left( \bigcup_{i \in I} \partial U_i \right)$, then
			\[
				\deg(f, y, U) = \sum_{i \in I} \deg(f, y, U_i).
			\]
			\item \label{enum:deg_excision} (Excision) If $K \subset \overline{U}$ is compact and $y \in \R^n \setminus f (K \cup \partial U)$, then
			\[
				\deg(f, y, U) = \deg(f, y, U \setminus K).
			\]
			\item \label{enum:deg_homotopy} (Homotopy invariance) If $g \in C(\Omega, \R^n)$, $H \colon I \times \overline{U} \to \R^n$ is a homotopy from $f$ to $g$, and $y \in \R^n \setminus H(I \times \partial U)$, then
			\[
				\deg(f, y, U) = \deg(g, y, U).
			\]
		\end{enumerate}
	\end{lemma}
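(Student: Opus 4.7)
The plan is to invoke the classical construction of the Brouwer topological degree and verify each of the five listed properties directly. I would begin by recalling the definition for smooth maps at regular values: for $f \in C^1(\Omega, \R^n)$ and $y$ a regular value with $y \notin f(\partial U)$, set
\[
\deg(f, y, U) = \sum_{x \in f^{-1}\{y\} \cap U} \sgn J_f(x),
\]
which is a finite sum since $f^{-1}\{y\} \cap \overline{U}$ is compact and the preimage of a regular value is discrete. One then extends to arbitrary $y \notin f(\partial U)$ via a small perturbation of $y$ to a regular value, and to continuous $f$ by uniform approximation on $\overline{U}$ by smooth maps, checking in each step that the resulting integer is independent of the choices made.

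With the definition in place, each of the five properties follows from an elementary argument. Local constancy (1) is immediate from the definition, since the signed count is unchanged as $y$ moves within a single connected component of $\R^n \setminus f(\partial U)$. Vanishing outside the image (2) holds because, if $y \notin f\overline{U}$, any sufficiently close smooth approximation $g$ also avoids $y$ on $\overline{U}$, forcing the defining sum to be empty. Additivity (3) is a direct consequence of the preimage count splitting over the pairwise disjoint open pieces $U_i$; note that only finitely many $U_i$ contribute for a regular value, which suffices by density. Excision (4) is a special case of additivity applied to the decomposition of $U$ into $U \setminus K$ and small open neighborhoods of $K$, combined with (2) applied to those neighborhoods once one chooses them small enough to miss $y$. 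Homotopy invariance (5) follows from the fact that $t \mapsto \deg(H(t,\cdot), y, U)$ is an integer-valued continuous function on $[0,1]$, where continuity is a consequence of (1) applied uniformly along the homotopy (using the hypothesis $y \notin H(I \times \partial U)$ and compactness of $I \times \partial U$).

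Since this material is entirely standard, the main obstacle is not conceptual but administrative: setting up the smooth approximation carefully and verifying that the degree is well-defined on continuous maps. For this reason, I would not reproduce the full construction in the paper, but rather refer the reader to \cite[Chapter~1]{Fonseca-Gangbo-book}, with the specific properties recorded in \cite[Theorems 2.1, 2.3 (3), 2.7]{Fonseca-Gangbo-book}.
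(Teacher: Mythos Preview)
Your proposal is correct and ends at exactly the same place as the paper: the lemma is not proved in the text but is instead attributed to \cite[Theorems 2.1, 2.3 (3), 2.7]{Fonseca-Gangbo-book}, with the construction deferred to \cite[Chapter~1]{Fonseca-Gangbo-book}. Your added sketch of the smooth/regular-value construction is accurate but goes beyond what the paper includes.
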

	
	Moreover, continuous Sobolev mappings satisfy a change of variables theorem in terms of the topological degree. The following version of this result follows from e.g.\ the discussion in \cite[Theorems 5.27, 5.21, and Remark 5.29]{Fonseca-Gangbo-book} with relatively straightforward adjustments to the arguments.
	\begin{lemma}\label{lem:change_of_vars}
		Let $\Omega \subset \R^n$ be an open set, let $v \in L^\infty(\R^n)$, and let $U \subset \Omega$ be open and compactly contained in $\Omega$. Suppose that $f \in C(\Omega, \R^n) \cap W^{1,p}_\loc(\Omega, \R^n)$, where either $p > n$, or $p = n$ and $f$ satisfies the Lusin (N) -property. Then $v \cdot \deg(f, \cdot, U) \in L^1(\R^n \setminus f \partial U)$, $v \circ f \cdot J_f \in L^1_\loc(\Omega)$ despite the fact that $v \circ f$ is not even necessarily measurable, and we moreover have
		\[
			\int_{U} v \circ f \cdot J_f = \int_{\R^n \setminus f \partial U} v \cdot \deg(f, \cdot, U).
		\]
	\end{lemma}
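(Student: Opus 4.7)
My approach is to combine the classical area formula for Sobolev mappings under the Lusin (N) condition with a signed-counting-function representation of the topological degree, then extend to bounded measurable $v$ by density. The key identification is that at almost every value $y \in \R^n \setminus f \partial U$, the preimage $f^{-1}(y) \cap U$ consists of finitely many points $x_1, \ldots, x_m$ at which $f$ is approximately differentiable with $J_f(x_j) \neq 0$. This follows from the area formula, which is valid under the Lusin (N) hypothesis (and automatic when $p > n$, since then $f \in C^{0,1-n/p}_\loc$ satisfies Lusin (N) by Sobolev embedding).

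At such a regular value, I would isolate mutually disjoint neighborhoods $V_j \ni x_j$ inside $U$ on which $f$ is uniformly close to its approximate linearization. Excision, additivity, and a homotopy to the linear part (Lemma~\ref{lem:top_degree_props}, parts \eqref{enum:deg_addition}, \eqref{enum:deg_excision}, and \eqref{enum:deg_homotopy}) then give
\[
\deg(f, y, U) = \sum_{j=1}^{m} \sgn J_f(x_j).
\]
Combined with the signed area formula, this yields
\[
\int_U (v \circ f) J_f = \int_{\R^n} v(y) \sum_{x \in f^{-1}(y) \cap U} \sgn J_f(x) \, dy = \int_{\R^n \setminus f \partial U} v \cdot \deg(f, \cdot, U)
\]
for any bounded Borel $v$, which is the desired identity.

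The integrability statements are handled as follows: $\deg(f, \cdot, U)$ is integer-valued, vanishes outside the compact set $\overline{fU}$ by Lemma~\ref{lem:top_degree_props}\eqref{enum:deg_image}, and is bounded in absolute value by the counting function $N(\cdot, f, U)$, which by the area formula satisfies $\int_{\R^n} N(y, f, U) \, dy = \int_U |J_f| < \infty$. The measurability subtlety for $v \circ f$ is addressed by choosing a Borel representative $\tilde v$ of the $L^\infty$-class of $v$; its composition $\tilde v \circ f$ is Borel measurable since $f$ is continuous. Any two Borel representatives differ only on a null set $N$, and the area formula gives $\int_{f^{-1}(N) \cap U} |J_f| = \int_N N(y, f, U) \, dy = 0$, so $J_f = 0$ almost everywhere on $f^{-1}(N)$. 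Hence $(v \circ f) J_f$ is well-defined up to a null set and its integral does not depend on the chosen representative. The main obstacle is establishing the local regularity step at the borderline exponent $p = n$: this is precisely where the Lusin (N) hypothesis enters, supplying the almost-everywhere approximate differentiability with nonvanishing Jacobian that underlies both the area formula and the degree-versus-multiplicity identification.
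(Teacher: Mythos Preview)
The paper does not actually prove this lemma; it only records that the statement ``follows from e.g.\ the discussion in \cite[Theorems 5.27, 5.21, and Remark 5.29]{Fonseca-Gangbo-book} with relatively straightforward adjustments to the arguments.'' Your outline is the standard route taken in that reference: area formula plus identification of the degree with the signed multiplicity $\sum_{x\in f^{-1}(y)\cap U}\sgn J_f(x)$ at almost every value, together with the Borel-representative trick for handling non-measurable $v\circ f$. So at the level of strategy you are aligned with the cited source.

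There is, however, a genuine technical gap in your sketch at the borderline exponent $p=n$. You write that at each preimage point $x_j$ you would ``isolate mutually disjoint neighborhoods $V_j$ on which $f$ is uniformly close to its approximate linearization'' and then homotope to the linear map. Approximate differentiability does not provide such a uniform estimate on any full neighborhood; it only controls $f$ on a set of density one at $x_j$. Consequently the homotopy in Lemma~\ref{lem:top_degree_props}\eqref{enum:deg_homotopy} cannot be run directly, and the conclusion $\deg(f,y,V_j)=\sgn J_f(x_j)$ is not justified as stated. The usual remedy (and what the Fonseca--Gangbo arguments effectively do) is a Federer-type decomposition: write $U=Z\cup\bigcup_k E_k$ with $|Z|=0$ and $f\vert_{E_k}$ coinciding with a $C^1$ (or Lipschitz) map $g_k$, establish the degree identity for each $g_k$ via the classical Sard/linearization argument, and then use Lusin~(N) to show that the exceptional set $Z$ contributes nothing to either side. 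Alternatively one can approximate $f$ by smooth maps and pass to the limit using stability of both the Jacobian integral and the degree. Your final paragraph correctly flags $p=n$ as the delicate case, but Lusin~(N) by itself only gives you the area formula and the null-image of $\{J_f=0\}$; it does not supply the local linearization you invoke. Once you replace that step with the decomposition or approximation argument, the rest of your proof goes through.
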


	Suppose then that $\Omega \subset \R^n$ is open, $f \colon \Omega \to \R^n$ is continuous, and $x_0 \in \Omega$ is such that $f^{-1}\{f(x_0)\}$ is a discrete subset of $\Omega$. Then there exists a neighborhood $U$ of $x_0$ compactly contained in $\Omega$ such that $\overline{U} \cap f^{-1}\{f(x_0)\} = \{x_0\}$. By the excision property of the degree, see Lemma \ref{lem:top_degree_props} \eqref{enum:deg_excision}, the value of $\deg(f, f(x_0), U)$ is independent on the choice of such $U$; this value is called the \emph{local index of $f$ at $x_0$}, and is denoted $i(x_0, f)$.
	
	We also recall that the local index satisfies the following summation formula. For a proof, see e.g.\ \cite[Theorem 2.9 (1)]{Fonseca-Gangbo-book}.
	
	\begin{lemma}\label{lem:local_index_summation}
		Let $\Omega \subset \R^n$ be open, let $f \colon \Omega \to \R^n$ be continuous, and let $y \in \R^n$ be a point for which $f^{-1}\{y\}$ is discrete. If $U$ is a bounded domain with $\overline{U} \subset \Omega$ and $y \notin f \partial U$, then
		\[
		\deg(f, y, U) = \sum_{x \in U \cap f^{-1} \{y\}} i(x, f).
		\]
	\end{lemma}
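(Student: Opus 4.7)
The plan is to reduce the formula to the definition of the local index using the additivity and excision properties of the topological degree from Lemma~\ref{lem:top_degree_props}. The key observation is that $f^{-1}\{y\} \cap \overline{U}$, being a discrete subset of the compact set $\overline{U}$, is finite. I would denote this set by $\{x_1, \ldots, x_k\}$, noting that each $x_i$ lies in $U$ since $y \notin f \partial U$ by assumption.

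I would then choose pairwise disjoint open neighborhoods $V_1, \ldots, V_k$ of $x_1, \ldots, x_k$ with $\overline{V_i} \subset U$ and $\overline{V_i} \cap f^{-1}\{y\} = \{x_i\}$ for each $i$; such a choice exists by finiteness of the preimage together with the Hausdorff property of $\R^n$. By the definition of the local index recalled immediately before the lemma statement, these choices give $\deg(f, y, V_i) = i(x_i, f)$ for every $i$.

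To connect $\deg(f, y, U)$ to the $\deg(f, y, V_i)$, I would set $K = \overline{U} \setminus \bigcup_i V_i$. Then $K$ is compact, contains $\partial U$, and contains no point of $f^{-1}\{y\}$, so $y \notin f(K \cup \partial U)$. The excision statement in Lemma~\ref{lem:top_degree_props} yields $\deg(f, y, U) = \deg(f, y, U \setminus K) = \deg(f, y, \bigcup_i V_i)$, and the additivity statement in the same lemma applied to the disjoint decomposition $\bigcup_i V_i$ then gives
\[
\deg(f, y, U) = \sum_{i=1}^k \deg(f, y, V_i) = \sum_{i=1}^k i(x_i, f),
\]
which is the desired identity. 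This proof is essentially bookkeeping, and I do not expect a substantial obstacle; the only care needed is in verifying that $y$ avoids the images of the relevant boundaries and of the compact set $K$, which reduces directly to the discreteness of $f^{-1}\{y\}$ and the hypothesis $y \notin f \partial U$.
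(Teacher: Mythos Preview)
Your argument is correct and is the standard proof of this summation formula. The paper does not actually prove this lemma itself; it simply refers to \cite[Theorem 2.9 (1)]{Fonseca-Gangbo-book}, and the proof there proceeds along exactly the lines you describe (finiteness of $f^{-1}\{y\}\cap\overline{U}$, disjoint small neighborhoods, excision, additivity).
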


	\subsection{Capacity}
	
	A \emph{condenser} in $\R^n$ is a pair $(C, U)$, where $C \subset U \subset \R^n$, $C$ is compact, and $U$ is open. If $p \in [1, \infty)$, we say that a function $u \in W^{1,p}_\loc(\R^n) \cap C(\R^n)$ is \emph{admissible} for the a condenser $(C, U)$ if $u \geq 1$ on $C$ and $u \leq 0$ on $\R^n \setminus U$. The family of admissible functions for $(C, U)$ is denoted $\adm(C, U)$, and the \emph{$p$-capacity} of a condenser $(C, U)$ is defined by
	\[
		\capac_p(C, U) = \inf_{u \in \adm(C, U)} \int_{\R^n} \abs{\nabla u}^p \vol_n.
	\]
	For the specific value $p = n$, the capacity $\capac_n(C, U)$ is known as the \emph{conformal capacity} of $(C, U)$, and will be abbreviated as $\capac(C, U)$.
	
	We note that $\capac_p$ is monotone in the following sense: if $C_1 \subset C_2$ and $U_1 \supset U_2$, then
	\begin{equation}\label{eq:capac_monotonicity}
		\capac_p(C_1, U_1) \leq \capac_p(C_2, U_2).
	\end{equation}
	Indeed, this is an immediate consequence of the definition as a function admissible for $(C_2, U_2)$ is also admissible for $(C_1, U_1)$.
	
	We single out the following consequence of the definition of $p$-capacity that we use numerous times in this article.
	
	\begin{lemma}\label{lem:basic_adm_function_estimate}
		Let $p \in [1, \infty)$, let $a, b \in (0, \infty)$ with $a < b$, and let $C \subset U \subset \R^n$, where $C$ is compact and $U$ is a bounded open set. Suppose that $f \in C(\overline{U} \setminus \intr C, \R^n \setminus \{0\})$ is such that $f\vert_{U \setminus C} \in W^{1,p}(U \setminus C, \R^n)$, $\abs{f} \leq a$ on $\partial U$, and $\abs{f} \geq b$ on $\partial C$. Then
		\[
			\capac_p(C, U) \leq \frac{1}{\log^p(b/a)} \int_{U \setminus C} \frac{\abs{Df}^p}{\abs{f}^p}.
		\] 
	\end{lemma}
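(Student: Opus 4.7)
The plan is to construct an explicit admissible function for the condenser $(C,U)$ by composing a Lipschitz truncation with $\log|f|$, and then estimate its $p$-energy via the chain rule. The choice of truncation is designed precisely so that the boundary hypotheses $\abs{f} \leq a$ on $\partial U$ and $\abs{f} \geq b$ on $\partial C$ turn into the admissibility boundary values $0$ and $1$.

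Concretely, I would first define the Lipschitz function $\phi \colon (0,\infty) \to [0,1]$ by
\[
\phi(t) = \max\left(0, \min\left(1, \frac{\log(t/a)}{\log(b/a)}\right)\right),
\]
so that $\phi \equiv 0$ on $(0,a]$, $\phi \equiv 1$ on $[b,\infty)$, and $\phi'(t) = 1/(t\log(b/a))$ for $t \in (a,b)$. Then I would set $u = \phi \circ \abs{f}$ on $\overline{U} \setminus \intr C$, $u \equiv 1$ on $C$, and $u \equiv 0$ on $\R^n \setminus U$. The hypothesis $\abs{f} \leq a$ on $\partial U$ forces $u = 0$ on $\partial U$, and $\abs{f} \geq b$ on $\partial C$ forces $u = 1$ on $\partial C$, so the three pieces match continuously and yield $u \in C(\R^n)$ with $u \geq 1$ on $C$ and $u \leq 0$ on $\R^n \setminus U$. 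Hence $u \in \adm(C,U)$ provided $u \in W^{1,p}_{\loc}(\R^n)$.

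For the Sobolev regularity and gradient estimate, I would use that $\abs{f}$ lies in $W^{1,p}(U \setminus C)$ with the pointwise bound $\abs{\nabla \abs{f}} \leq \abs{Df}$ coming from $\nabla\abs{f} = (Df)^{T} f / \abs{f}$, and that composition with the Lipschitz map $\phi$ preserves $W^{1,p}$-regularity. Extending across $\partial C$ and $\partial U$ by the matching constants $1$ and $0$ then gives $u \in W^{1,p}_{\loc}(\R^n)$. The chain rule yields that $\nabla u$ is supported in $\{a < \abs{f} < b\}$, where
\[
\abs{\nabla u} \leq \frac{1}{\log(b/a)} \cdot \frac{\abs{Df}}{\abs{f}}
\]
almost everywhere. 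Plugging $u$ into the definition of $\capac_p(C,U)$ and integrating the $p$-th power of this bound over $U \setminus C$ produces the claimed inequality.

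The only delicate point in the argument is verifying that the piecewise-defined $u$ genuinely belongs to $W^{1,p}_{\loc}(\R^n)$ across $\partial C$ and $\partial U$, but this is standard since $u$ continuously attains its constant extension value from each side and $\phi \circ \abs{f}$ has zero weak gradient in the truncated regions $\{\abs{f} \leq a\}$ and $\{\abs{f} \geq b\}$.
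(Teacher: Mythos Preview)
Your approach is essentially the same as the paper's: build an admissible function by truncating $\log\abs{f}$ and estimate its gradient via the chain rule. The one place where the paper is more careful is the Sobolev gluing across $\partial C$ and $\partial U$, which you dismiss as ``standard.'' Rather than extending by constants directly along these (potentially irregular) boundaries, the paper perturbs the truncation thresholds to $a+\eps$ and $b-\eps$; by continuity of $f$ this forces $u_\eps \equiv 1$ on an \emph{open} neighborhood $C_\eps \setminus \intr C$ of $\partial C$ and $u_\eps \equiv 0$ on an open neighborhood $\overline{U}\setminus U_\eps$ of $\partial U$, so the three open pieces $\intr C_\eps$, $U\setminus C$, $\R^n\setminus\overline{U_\eps}$ overlap and the $W^{1,p}_{\loc}$-membership is immediate. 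The monotonicity of capacity then gives $\capac_p(C,U)\le\capac_p(C_\eps,U_\eps)$, and one lets $\eps\to 0$. Your gluing claim is in fact correct (one can justify it by the truncation $(\abs{u}-\eps)_+\sgn u$, which has compact support in $U\setminus C$, and pass to a weak limit), but the paper's $\eps$-shift is the cleaner way to make this step rigorous.
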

	\begin{proof}
		Let $\eps \in (0, (b-a)/2)$. We adjust the condenser slightly by selecting an open $U_\eps$ and a compact $C_\eps$ for which $C \subset \intr C_\eps$, $C_\eps \subset U_\eps$, $\overline{U_\eps} \subset U$, $\abs{f} \leq a + \eps$ on $C_\eps \setminus \intr C$, and $\abs{f} \geq b - \eps$ on $\overline{U} \setminus U_\eps$. We then define a function
		\[
			u_\eps = \min \left(1, \max \left( 0,  \frac{\log \abs{f} - \log (a + \eps)}{\log (b - \eps) - \log (a + \eps)}\right) \right).
		\]
		Since $0$ is not in the image of $f$, $u_\eps$ is a well-defined continuous map in $C(U \setminus C)$. Moreover, by boundedness of $U$, we see that $\abs{f}$ has a positive minimum on $\overline{U} \setminus \intr C$. Thus, by a Sobolev chain rule such as the one shown in \cite{Ambrosio-DalMaso}, and since Sobolev spaces are closed under minima and maxima as discussed e.g.\ in \cite[Theorem 1.20]{Heinonen-Kilpelainen-Martio_book}, we have $u \in W^{1,p}(U\setminus C)$ with
		\[
			\abs{\nabla u_\eps} \leq \frac{\nabla \abs{f}}{\abs{f} \log \frac{b-\eps}{a+\eps}} \leq \frac{\abs{Df}}{\abs{f} \log \frac{b-\eps}{a+\eps}}.
		\]
		
		We also have $u_\eps \equiv 1$ on $C_\eps \setminus \intr C$ and $u_\eps \equiv 0$ on $\overline{U} \setminus U_\eps$. Thus, we can extend $u_\eps$ to $\R^n$ by defining $u_\eps \equiv 1$ on $\intr C_\eps$ and $u_\eps \equiv 0$ on $\R^n \setminus \overline{U_\eps}$. As these domains of definition are open and cover all of $\R^n$, we conclude that $u_\eps \in C(\R^n) \cap W^{1,p}_\loc(\R^n)$. Thus, $u_\eps$ is admissible for $(C_\eps, U_\eps)$. We can then use \eqref{eq:capac_monotonicity}, the gradient estimate for $u_\eps$, and the fact that $\nabla u_\eps = 0$ outside $U \setminus C$, in order to obtain
		\[
			\capac_p(C, U) \leq \capac_p(C_\eps, U_\eps) \leq \int_{\R^n} \abs{\nabla u_\eps}^p \leq \frac{1}{\log^p \frac{b-\eps}{a+\eps}} \int_{U \setminus C} \frac{\abs{Df}^p}{\abs{f}^p}.
		\]
		Letting $\eps \to 0$, the claim follows.
	\end{proof}
	
	We then recall the \emph{Loewner property} of $\R^n$; see e.g.\ \cite[Section 3]{Heinonen-Koskela_Acta} for details.
	\begin{lemma}\label{lem:Loewner}
		Let $(C, U)$ be a condenser in $\R^n$ with $n \geq 2$. Suppose that $C$ and $\R^n \setminus U$ are connected sets with more than one point, and that $C$ is bounded. Then
		\[
			\capac(C, U) \geq \phi_n \left( \frac{\dist(C, \R^n \setminus U)}{\min(\diam C, \diam (\R^n \setminus U))} \right),
		\]
		where $\phi_n \colon (0, \infty) \to (0, \infty)$ is a decreasing function depending only on $n$ with
		\[
			\phi_n(t) \geq C(n) \max\left(\log \frac{1}{t}, \frac{1}{\log^{n-1}(t)}\right)
		\]
	\end{lemma}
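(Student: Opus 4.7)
The plan is to reduce the statement to the classical capacity-modulus duality combined with sharp estimates for the modulus of the Teichmüller ring in $\R^n$.

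First, I would invoke the standard identity $\capac(C,U) = \mathrm{Mod}_n(\Gamma(C, \R^n \setminus U; U))$, which identifies the conformal capacity of the condenser with the $n$-modulus of the family of curves in $U$ joining $C$ to $\R^n \setminus U$ (see e.g.\ Ziemer or Hesse). Since restriction of a curve joining $C$ and $\R^n \setminus U$ in $\R^n$ to the set $U$ yields a curve in the aforementioned family, this modulus is bounded below by $\mathrm{Mod}_n(\Gamma(C, \R^n \setminus U; \R^n))$. The problem is thereby reduced to a Loewner-type lower bound for modulus in the ambient space $\R^n$.

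Second, the existence of a positive decreasing function $\phi_n$ with the required property follows from the fact that $\R^n$ is an $n$-Loewner space in the sense of Heinonen--Koskela. Specifically, Ahlfors $n$-regularity of $\R^n$ together with the $(1,n)$-Poincaré inequality places $\R^n$ in the framework developed in \cite{Heinonen-Koskela_Acta}, which yields a continuous decreasing function $\phi_n \colon (0,\infty) \to (0,\infty)$ such that $\mathrm{Mod}_n(\Gamma(C, F; \R^n)) \geq \phi_n(\Delta(C,F))$ for all disjoint non-degenerate continua $C, F$, where $\Delta(C,F) = \dist(C,F)/\min(\diam C, \diam F)$.

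Third, to pin down the explicit asymptotic behavior of $\phi_n$, I would reduce by spherical (Schwarz) symmetrization to the Teichmüller ring domain $R_T(s)$, whose conformal modulus admits sharp estimates due to Gehring and Väisälä. Using $\capac(R_T(s)) = \omega_{n-1}\,\mathrm{Mod}(R_T(s))^{1-n}$, the classical asymptotics give $\capac(R_T(s)) \asymp \log(1/s)$ as $s \to 0$ and $\capac(R_T(s)) \asymp 1/\log^{n-1}(s)$ as $s \to \infty$. Taking the maximum of these two regimes (one dominates for $s \leq 1$ and the other for $s > 1$) produces the stated bound on $\phi_n$. The main obstacle is the large-$s$ regime: the capacity decays to zero, and the sharp $1/\log^{n-1}(s)$ rate essentially requires the boundedness hypothesis on $C$, since without it one cannot control the ambient geometry to extract the sphere-like scaling from the Teichmüller ring estimate.
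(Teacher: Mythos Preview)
The paper does not prove this lemma; it is stated as a recalled preliminary with the single sentence ``We then recall the \emph{Loewner property} of $\R^n$; see e.g.\ \cite[Section 3]{Heinonen-Koskela_Acta} for details.'' Your sketch is correct and simply unpacks what that citation contains: the capacity--modulus identity (Ziemer/Hesse), the Loewner lower bound for $\R^n$ from Heinonen--Koskela, and the Teichm\"uller ring asymptotics (Gehring, V\"ais\"al\"a) that give the explicit form of $\phi_n$ in the two regimes.

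Two small remarks. First, the intermediate inequality you set up between $\mathrm{Mod}_n(\Gamma(C,\R^n\setminus U;U))$ and $\mathrm{Mod}_n(\Gamma(C,\R^n\setminus U;\R^n))$ is unnecessary: the Ziemer--Hesse identity already gives $\capac(C,U)$ equal to the modulus of the connecting family in the full ambient $\R^n$, so you can pass directly to the Loewner estimate. Second, your explanation of the boundedness hypothesis is off. The large-$t$ asymptotic $1/\log^{n-1}(t)$ comes straight from the Teichm\"uller ring capacity and does not rely on $C$ being bounded; the hypothesis is present only because the paper's definition of a condenser requires $C$ to be compact (and so that $\diam C$ is finite in the statement).
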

	
	We point out the following special case of the Loewner property; see also \cite[Theorem~11.7 (4)]{Vaisala_book}.
	
	\begin{lemma}\label{lem:ring_condenser_capacity}
		Let $C_1, C_2$ be closed, connected sets with $C_1$ compact. Suppose that $x_0 \in C_1$, that $C_2$ is unbounded, and that both $C_1$ and $C_2$ meet $\partial \B^n(x_0, R)$ for some $R > 0$. Then
		\[
		\capac(C_1, \R^n \setminus C_2) \geq C(n) > 0.
		\]
	\end{lemma}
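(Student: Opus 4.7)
The strategy is to reduce the claim directly to the Loewner property, Lemma~\ref{lem:Loewner}, applied to the condenser $(C_1, \R^n \setminus C_2)$. First I would dispose of the trivial case: if $C_1 \cap C_2 \neq \emptyset$, then $C_1 \not\subset \R^n\setminus C_2$ and there is no admissible function for the condenser, so $\capac(C_1, \R^n \setminus C_2) = \infty$ by the standard convention, and the statement holds. So I may assume $C_1 \cap C_2 = \emptyset$, making $(C_1, \R^n \setminus C_2)$ a genuine condenser.

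Next I would verify the hypotheses of Lemma~\ref{lem:Loewner}: $C_1$ is compact and connected by assumption, and $\R^n \setminus (\R^n \setminus C_2) = C_2$ is closed and connected. Both sets contain more than one point, since $x_0 \in C_1$ and $C_1 \cap \partial \B^n(x_0, R) \neq \emptyset$ forces $\diam C_1 \geq R > 0$, while $C_2$ meets $\partial \B^n(x_0, R)$ and is unbounded.

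The substance of the proof is then the estimation of the ratio that appears inside $\phi_n$. Choosing points $p_i \in C_i \cap \partial \B^n(x_0, R)$ for $i=1,2$, the triangle inequality gives
\[
\dist(C_1, C_2) \leq \abs{p_1 - p_2} \leq 2R.
\]
On the other hand, $\diam C_1 \geq \abs{x_0 - p_1} = R$, and $\diam C_2 = \infty$ since $C_2$ is unbounded. Hence $\min(\diam C_1, \diam C_2) \geq R$, and consequently
\[
\frac{\dist(C_1, C_2)}{\min(\diam C_1, \diam(\R^n \setminus (\R^n \setminus C_2)))} \leq \frac{2R}{R} = 2.
\]
Using that $\phi_n$ is decreasing, Lemma~\ref{lem:Loewner} yields
\[
\capac(C_1, \R^n \setminus C_2) \geq \phi_n(2) =: C(n) > 0,
\]
which is the asserted bound. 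The argument is essentially a bookkeeping exercise, so I do not anticipate any genuine obstacle; the only mildly delicate point is confirming that the Loewner hypotheses all hold (in particular the non-degeneracy of the two continua), which the presence of $x_0$ and the common sphere handles cleanly.
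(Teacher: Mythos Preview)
Your proposal is correct and follows the same route as the paper: apply the Loewner estimate of Lemma~\ref{lem:Loewner} after bounding the relevant ratio by a dimensional constant. The only cosmetic difference is that the paper uses $x_0\in C_1$ and a point of $C_2\cap\partial\B^n(x_0,R)$ to get $\dist(C_1,C_2)\le R$, hence the ratio is at most $R/R=1$, whereas you pair two sphere points and obtain $2$; either bound suffices.
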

	\begin{proof}
		The claim follows from Lemma \ref{lem:Loewner}, since we have $\diam(C_2) = \infty$ and $\dist(C_1, C_2)/\diam(C_1) \leq R/R = 1$.
	\end{proof}
	
	We also need the following simple lower bound for the $p$-capacity of a point inside a cube when $p > n$.
	
	\begin{lemma}\label{lem:point_capacity}
		Let $p > n$, let $Q$ be an open cube in $\R^n$, and let $x_0 \in Q$. Then
		\[
			\capac_p(\{x_0\}, Q) \geq C(n, p) \abs{Q}^{-\frac{p-n}{n}}.
		\]
	\end{lemma}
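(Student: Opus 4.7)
The plan is to apply the Morrey--Sobolev embedding, which for $p > n$ makes any $u \in W^{1,p}(\R^n)$ Hölder continuous of exponent $1 - n/p$ with Hölder constant controlled by $\|\nabla u\|_{L^p(\R^n)}$. The intuition is that any function admissible for $(\{x_0\}, Q)$ must jump from $0$ outside $Q$ to at least $1$ at $x_0$ across a distance at most of order $|Q|^{1/n}$, and Morrey's inequality turns this into a lower bound on gradient energy.

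First, fix an arbitrary $u \in \adm(\{x_0\}, Q)$ with $\|\nabla u\|_{L^p(\R^n)} < \infty$ (otherwise there is nothing to show). Replacing $u$ by $\max(u, 0)$ keeps $u$ admissible and does not increase the gradient energy, so we may assume $u \geq 0$; since $u$ is continuous and vanishes outside $Q$, it then also vanishes on $\partial Q$, while $u(x_0) \geq 1$. Next, because $x_0$ lies in the open cube $Q$ of side length $|Q|^{1/n}$, a direct geometric argument produces a point $y \in \partial Q$ with
\[
|x_0 - y| = \dist(x_0, \partial Q) \leq \tfrac{1}{2} |Q|^{1/n}.
\]
Note that any admissible $u$ is supported in $\overline{Q}$, hence actually belongs to $W^{1,p}(\R^n)$, so there is no issue in invoking Morrey's embedding on the full space.

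Applying Morrey's inequality on $\R^n$ to the pair $(x_0, y)$, together with $u(x_0) - u(y) \geq 1$, then yields
\[
1 \leq C(n, p) |x_0 - y|^{1 - n/p} \|\nabla u\|_{L^p(\R^n)} \leq C(n, p) |Q|^{1/n - 1/p} \|\nabla u\|_{L^p(\R^n)}.
\]
Raising to the $p$-th power and rearranging gives $\int_{\R^n} |\nabla u|^p \geq C(n, p) |Q|^{-(p-n)/n}$, and taking the infimum over admissible $u$ finishes the proof. I do not foresee any real obstacle here: the estimate is essentially a one-line consequence of Morrey's embedding, with the only minor care being the reduction to nonnegative admissible functions and the use of continuity to propagate the value $0$ from $\R^n \setminus Q$ onto $\partial Q$.
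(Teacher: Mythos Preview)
Your argument is correct. The reduction to nonnegative admissible functions, the observation that such functions are compactly supported in $\overline{Q}$ and hence lie in $W^{1,p}(\R^n)$, and the application of Morrey's inequality between $x_0$ and a nearest boundary point are all sound; the exponents check out.

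The paper takes a different route: it enlarges $Q$ to the smallest ball $\B^n(x_0,r)$ containing it (so $r < \sqrt{n}\,|Q|^{1/n}$), uses the monotonicity \eqref{eq:capac_monotonicity} to pass to $\capac_p(\{x_0\}, \B^n(x_0,r))$, and then invokes the explicit formula $\capac_p(\{x_0\}, \B^n(x_0,r)) = C(n,p)\,r^{-(p-n)}$ from \cite[Example~2.12]{Heinonen-Kilpelainen-Martio_book}. Your approach via Morrey's embedding is arguably more conceptual---it exposes directly \emph{why} points have positive $p$-capacity when $p>n$---whereas the paper's approach is a touch more self-contained, trading the Sobolev embedding for a single explicit capacity computation. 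Both yield the same bound with the same dependence on $|Q|$, and neither is materially longer than the other.
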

	\begin{proof}
		Let $s$ be the length of the side of $Q$, and let $r$ be the smallest radius for which $Q \subset \B^n(x_0, r)$. Note that $r < \sqrt{n}s$ and $s = \abs{Q}^{1/n}$. Now, by the monotonicity of capacity \eqref{eq:capac_monotonicity} and the standard capacity formula for $(\{x_0\}, \B^n(x_0, r))$ given e.g.\ in \cite[Example 2.12]{Heinonen-Kilpelainen-Martio_book}, we have
		\begin{multline*}
			\capac_p(\{x_0\}, Q) \geq  \capac_p(\{x_0\}, \B^n(x_0, r))\\ 
			= \frac{C(n, p)}{r^{p-n}} > \frac{n^\frac{p-n}{2}C(n,p)}{s^{p-n}} 
			= n^\frac{p-n}{2} C(n,p) \abs{Q}^{-\frac{p-n}{n}},
		\end{multline*}
		which completes the proof.
	\end{proof}
	
	\subsection{Gehring's lemma}
	
	We also recall a local version of Gehring's lemma that we use in our arguments. The following version is given by Iwaniec in \cite{Iwaniec_GehringLemma}. Note that if $Q$ is a cube in $\R^n$ and $c > 0$, we use $cQ$ to denote the cube $Q$ scaled by $c$ with the same center.
	
	\begin{prop}[{\cite[Proposition 6.1]{Iwaniec_GehringLemma}}]
		\label{prop:local_Gehring_lemma}
		Let $Q_0$ be a cube in $\R^n$, and let $g, h \in L^p(Q_0)$, $1 < p < \infty$, be non-negative functions satisfying
		\[
		\left( \dashint_Q g^p \right)^\frac{1}{p} \leq C_0 \dashint_{2Q} g + \left( \dashint_{2Q} h^p \right)^\frac{1}{p}
		\]
		for all cubes $Q$ with $2Q \subset Q_0$. Then there exists $q_0 = q_0(n, p, C_0) > p$ such that for all $q \in (p, q_0)$ and $\sigma \in (0, 1)$, we have
		\[
		\left( \dashint_{\sigma Q_0} g^q \right)^\frac{1}{q} \leq C(n, p, q, \sigma) \left(  \left( \dashint_{Q_0} g^p \right)^\frac{1}{p} + \left( \dashint_{Q_0} h^q \right)^\frac{1}{q} \right).
		\]
	\end{prop}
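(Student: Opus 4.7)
The plan is to establish this local self-improvement of the reverse H\"older inequality through a classical Calder\'on-Zygmund stopping-time argument combined with a good-$\lambda$ estimate, taking care to handle the fact that the hypothesis is only available on cubes $Q$ whose double $2Q$ is contained in $Q_0$.

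First, I would fix an intermediate scale $\sigma < \sigma' < 1$ and choose a threshold $\lambda_0 > 0$ comparable to $\bigl( \smallabs{Q_0}^{-1} \int_{Q_0} g^p \bigr)^{1/p}$, large enough that every dyadic Calder\'on-Zygmund stopping cube for $g$ at any level $\lambda > \lambda_0$ inside $\sigma' Q_0$ satisfies $2Q \subset Q_0$. Performing such a stopping-time decomposition at level $\lambda$ produces mutually disjoint dyadic sub-cubes $\{Q_j\}$ of $\sigma' Q_0$ with $\lambda < \dashint_{Q_j} g \leq 2^n \lambda$, while $g \leq \lambda$ a.e.\ on $\sigma' Q_0 \setminus \bigcup_j Q_j$. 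On each $Q_j$ the hypothesis gives
\[
	\int_{Q_j} g^p \leq C_0^p \smallabs{Q_j} \Bigl( \dashint_{2Q_j} g \Bigr)^p + \smallabs{Q_j} \dashint_{2Q_j} h^p.
\]

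Next, I would split the average $\dashint_{2Q_j} g$ at an intermediate level $\lambda/\beta$, with $\beta > 1$ to be chosen, absorbing the contribution from $\{g \leq \lambda/\beta\}$ into a power of $\lambda$ using $\dashint_{Q_j} g > \lambda$. Summing over $j$ using the bounded overlap of the doubles $\{2Q_j\}$ yields a good-$\lambda$ inequality of the form
\[
	\int_{\{g > \beta\lambda\} \cap \sigma Q_0} g^p \leq A \lambda^{p-1} \int_{\{g > \lambda/\beta\} \cap Q_0} g + A \int_{\{h > c\lambda\} \cap Q_0} h^p,
\]
with constants $A, c, \beta$ depending only on $n$, $p$, and $C_0$. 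Multiplying by $\lambda^{q - p - 1}$ and integrating in $\lambda \in (\lambda_0, \infty)$ via the layer-cake formula then produces an inequality of the shape $\int_{\sigma Q_0} g^q \leq A (q - p)^{-1} \int_{Q_0} g^q + (\text{terms involving } h^q \text{ and } g^p)$. For any $q \in (p, q_0)$ with $q_0$ so close to $p$ that $A/(q_0 - p) < 1/2$, the $g^q$-term on the right may be absorbed into the left, yielding the required bound on $\sigma Q_0$.

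The main obstacle I anticipate is precisely the localization bookkeeping: one must verify that no Calder\'on-Zygmund cube arising at levels $\lambda > \lambda_0$ reaches too close to $\partial Q_0$, so that the doubling $2Q_j \subset Q_0$ required by the hypothesis is preserved throughout the stopping-time construction. This is what forces an intermediate scale $\sigma'$ strictly between $\sigma$ and $1$ and what couples $\lambda_0$ to the $L^p$-norm of $g$ on $Q_0$; these constraints are, in turn, exactly the origin of the dependence of the final constants on $\sigma$ in the statement. Once this is arranged, the remainder of the argument reduces to the standard global proof of Gehring's lemma as presented, for instance, in \cite{Iwaniec_GehringLemma}.
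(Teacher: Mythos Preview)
The paper does not supply its own proof of this proposition; it is quoted as \cite[Proposition~6.1]{Iwaniec_GehringLemma} and used as a black box, so there is nothing to compare against directly. Your outline follows the standard Calder\'on--Zygmund stopping-time/good-$\lambda$ route that Iwaniec uses, and the overall architecture is correct, but two points in your sketch need repair.

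First, the coefficient you record after the layer-cake step has the wrong dependence on $q-p$. Integrating the good-$\lambda$ inequality against $\lambda^{q-p-1}\,d\lambda$ and using Fubini produces, after normalizing, a factor proportional to $(q-p)$ in front of $\int_{Q_0} g^q$, not $(q-p)^{-1}$. Your own sentence ``$q_0$ so close to $p$ that $A/(q_0-p)<1/2$'' is internally inconsistent (small $q_0-p$ makes $A/(q_0-p)$ large), which flags the slip. The absorption argument still goes through once you reverse this: choose $q_0-p$ small enough that the coefficient is below $1/2$.

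Second, and more substantively, the absorption step as you describe it does not close: the left-hand side carries $\int_{\sigma Q_0} g^q$ while the right-hand side carries $\int_{Q_0} g^q$ over the \emph{larger} cube, and one cannot absorb an integral over a larger set into one over a smaller set. This is precisely the obstruction that separates the local Gehring lemma from the global one, and it is not the same issue as the boundary bookkeeping you mention at the end (that only ensures $2Q_j\subset Q_0$, not that the two sides match). The standard remedies are either to truncate $g$ to $g_M=\min(g,M)$ so that all $L^q$-integrals are finite, prove the estimate for every concentric pair $\sigma Q'\subset Q'\subset Q_0$, and then invoke a Giaquinta--Modica iteration lemma to pass from $Q_0$ down to $\sigma Q_0$; or to run the decomposition with Whitney-type cubes graded by their distance to $\partial Q_0$, as in \cite{Iwaniec_GehringLemma}. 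Either way, an additional iteration or covering layer is required beyond what your sketch contains.
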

	
	\section{Normal neighborhoods and linear dilatations for non-open mappings}
	
	We then outline some basic theory of normal neighborhoods and linear dilatations for continuous mappings $f \colon \Omega \to \R^n$. For open discrete mappings, this theory is standard; see e.g.\ \cite[Chapter I.4]{Rickman_book}. However, in our setting we only have openness and discreteness at a single value, which causes some differences to the theory.
	
	\subsection{Normal neighborhoods}
	
	We begin by introducing normal neighborhoods. Due to differences between our setting and the setting of open discrete maps, we need an additional stronger level of the concept.
	
	\begin{defn}\label{def:normal_neighborhoods}
		Let $\Omega \subset \R^n$ be open, let $f \colon \Omega \to \R^n$ be continuous, and let $x_0 \in \Omega$ be a point for which $f^{-1}\{f(x_0)\}$ is discrete. We say that a bounded open set $U \subset \R^n$ is a \emph{normal neighborhood} of $x_0$ (under $f$) if $\overline{U} \subset \Omega$, $x_0 \in U$, $\overline{U} \cap f^{-1}\{f(x_0)\} = \{x_0\}$, $f(x_0) \in \intr f U$, and $f\partial U \subset \partial f U$. Moreover, we say that $U \subset \R^n$ is a \emph{strong normal neighborhood} of $x_0$ (under $f$) if $U$ is a normal neighborhood of $x_0$ and there exists a larger normal neighborhood $U_0$ of $x_0$ such that $U \subset U_0$ and every bounded component of $\R^n \setminus U$ is contained in $U_0$.
	\end{defn}
	
	We note the following immediate consequences of the definitions of normal and strong normal neighborhoods.
	
	\begin{lemma}\label{lem:normal_nbhd_props}
		Let $\Omega \subset \R^n$ be open, let $f \colon \Omega \to \R^n$ be continuous, and let $x_0 \in \Omega$ be a point for which $f^{-1}\{f(x_0)\}$ is discrete. If $U$ is a normal neighborhood of $x_0$, then 
		\begin{equation}\label{eq:normal_neighborhood_degree}
			\deg(f, f(x_0), U) = i(x_0, f).
		\end{equation}
		Moreover, if $U$ is a strong normal neighborhood of $x_0$ and $V$ is a bounded component of $\R^n \setminus U$, then $f^{-1}\{f(x_0)\} \cap V = \emptyset$.
	\end{lemma}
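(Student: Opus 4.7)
The plan is to unpack both claims directly from the definitions, using the excision property of the topological degree for the first part and a simple set-theoretic chase for the second.

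For the first claim, observe that by Definition \ref{def:normal_neighborhoods} the normal neighborhood $U$ is bounded with $\overline{U} \subset \Omega$ and satisfies $\overline{U} \cap f^{-1}\{f(x_0)\} = \{x_0\}$. Since $x_0 \in U$, this immediately gives $f(x_0) \notin f \partial U$, so $\deg(f, f(x_0), U)$ is well defined. The definition of the local index given just after Lemma \ref{lem:change_of_vars} shows that $i(x_0, f)$ equals $\deg(f, f(x_0), U')$ for \emph{any} open $U' \ni x_0$ compactly contained in $\Omega$ with $\overline{U'} \cap f^{-1}\{f(x_0)\} = \{x_0\}$, the invariance being furnished by the excision property (Lemma \ref{lem:top_degree_props} \eqref{enum:deg_excision}). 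Since $U$ meets these requirements, \eqref{eq:normal_neighborhood_degree} follows with no further work.

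For the second claim, let $U_0$ be a larger normal neighborhood witnessing that $U$ is strong, so that $U \subset U_0$ and every bounded component of $\R^n \setminus U$ is contained in $U_0$; in particular $V \subset U_0$. Since $U_0$ is itself a normal neighborhood, we have $\overline{U_0} \cap f^{-1}\{f(x_0)\} = \{x_0\}$, hence $U_0 \cap f^{-1}\{f(x_0)\} \subset \{x_0\}$. However, $V$ is a component of $\R^n \setminus U$, so $V \cap U = \emptyset$, and in particular $x_0 \notin V$ since $x_0 \in U$. Combining these observations gives
\[
V \cap f^{-1}\{f(x_0)\} \subset U_0 \cap f^{-1}\{f(x_0)\} \setminus \{x_0\} = \emptyset,
\]
as desired.

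There is no genuine obstacle here; the content is entirely in having set up Definition \ref{def:normal_neighborhoods} correctly. The only point worth double-checking while writing the formal proof is that the definition indeed guarantees $\overline{U} \subset \Omega$ (so that $\deg(f, f(x_0), U)$ is defined in the sense of the excerpt), and that the set-up of a strong normal neighborhood places $V$ inside $U_0$ as a \emph{subset}, not merely up to closure, so that the chain of containments in the last display does not need to cross into $\partial U_0$.
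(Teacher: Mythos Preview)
Your proof is correct and follows essentially the same approach as the paper. The only cosmetic difference is that for \eqref{eq:normal_neighborhood_degree} the paper verifies $f(x_0)\notin f\partial U$ via the conditions $f(x_0)\in\intr fU$ and $f\partial U\subset\partial fU$, and then invokes the local index summation formula (Lemma~\ref{lem:local_index_summation}) rather than the definition of $i(x_0,f)$ directly; since $U\cap f^{-1}\{f(x_0)\}=\{x_0\}$, the sum collapses to a single term and the two arguments are interchangeable.
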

	\begin{proof}
		$f(x_0) \in \intr f U$ and $f\partial U \subset \partial f U$ imply that $\deg(f, f(x_0), U)$ is well defined, and Lemma \ref{lem:local_index_summation} yields \eqref{eq:normal_neighborhood_degree}. The claim that $f^{-1}\{f(x_0)\} \cap V = \emptyset$ follows from the fact that $V \subset U_0$ for some larger neighborhood $U_0$, $f^{-1}\{f(x_0)\} \cap U_0 = \{x_0\}$, and $x_0 \in U \subset \R^n \setminus V$.
	\end{proof}
	
	If $\Omega \subset \R^n$ is open and $f \colon \Omega \to \R^n$ is continuous, then for every point $x_0 \in \Omega$ and every $\rho > 0$, we use $U_f(x_0, \rho)$ to denote the connected component of $f^{-1} \B^n(f(x_0), \rho)$ that contains $x_0$. Since $\R^n$ is locally connected, connected components of open subsets of $\R^n$ are open. Thus, the sets $U_f(x_0, \rho)$ are open neighborhoods of $x_0$. 
	
	We then point out the following basic topological lemma.
	
	\begin{lemma}\label{lem:boundary_into_boundary}
		Let $\Omega \subset \R^n$ be open, let $x_0 \in \Omega$, and let $f \colon \Omega \to \R^n$ be continuous. If $V \subset \R^n$ is open and $U \subset \Omega$ is a connected component of $f^{-1} V$, then $f \partial U \subset \partial V$. In particular, for every $\rho > 0$, we have $f \partial U_f(x_0, \rho) \subset \partial \B^n(f(x_0), \rho)$.
	\end{lemma}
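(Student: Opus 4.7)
The plan is a direct topological argument. Since $V$ is open and $\R^n$ is locally connected, the preimage $f^{-1} V$ is open, and each of its connected components, including $U$, is open. In particular $U \subset f^{-1} V$, so $f(U) \subset V$; and because $U$ is open, $\partial U \cap U = \emptyset$. The goal is to show for $x \in \partial U$ (implicitly, where $f$ is defined) that $f(x) \in \overline{V} \setminus V = \partial V$.

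For the inclusion $f(x) \in \overline{V}$, the point $x$ is a limit of points in $U$; take a sequence $x_k \in U$ with $x_k \to x$, so by continuity $f(x_k) \to f(x)$, and since $f(x_k) \in V$ this forces $f(x) \in \overline{V}$. The substantive step is to exclude $f(x) \in V$. Suppose for contradiction that $f(x) \in V$. Then $x \in f^{-1} V$, and so $x$ lies in some connected component $U'$ of $f^{-1} V$, which is again open. Since $x \in \partial U$, every neighborhood of $x$ meets $U$, so the open set $U'$ satisfies $U' \cap U \neq \emptyset$. But $U$ and $U'$ are both connected and contained in $f^{-1} V$, so $U \cup U'$ is also connected in $f^{-1} V$; by maximality of connected components this forces $U = U' = U \cup U'$. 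Then $x \in U$, contradicting $x \in \partial U$ since $U$ is open. Hence $f(x) \notin V$, completing the proof of the main assertion.

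For the \emph{in particular} statement, apply the established conclusion with $V = \B^n(f(x_0), \rho)$, noting that by definition $U_f(x_0, \rho)$ is the connected component of $f^{-1} \B^n(f(x_0), \rho)$ containing $x_0$, and that $\partial \B^n(f(x_0), \rho)$ is the topological boundary of $\B^n(f(x_0), \rho)$. There is no real obstacle here; the only point that requires any care is the second step, where the component-maximality argument is used to rule out $f(x) \in V$.
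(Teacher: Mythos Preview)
Your proof is correct and follows essentially the same approach as the paper's. Both arguments show $f(x)\in\overline{V}$ by continuity and then rule out $f(x)\in V$ via the maximality of connected components; the paper phrases the contradiction as ``$\{x\}\cup U$ is a connected subset of $f^{-1}V$ strictly larger than $U$'', while you phrase it as ``$x$ lies in some component $U'$ which must equal $U$'', but these are the same idea.
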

	\begin{proof}
		Suppose $x \in \partial U$, and let $B$ be a neighborhood of $f(x)$. By continuity of $f$, there exists a neighborhood $W$ of $x$ with $f(W) \subset B$. Since $x \in \partial U$, $B$ meets $V$ at a point $z$. Thus, since $U \subset f^{-1}V$, $B$ meets $V$ at $f(z)$. As this holds for all neighborhoods $B$ of $f(x)$, we conclude that $f(x) \in \overline{V}$.
		
		Suppose then towards contradiction that $f(x) \in V$. Then $\{x\} \cup U \subset f^{-1} V$. Moreover, since $U$ is connected and $x \in \partial U$, we have that $\{x\} \cup U$ is connected. Moreover, since $\R^n$ is locally connected, and $U$ is a component of an open subset of $\R^n$, $U$ is open, and therefore $x \notin U$. Thus, $\{x\} \cup U$ is a connected subset of $f^{-1} V$ larger than the connected component $U$ of $f^{-1} V$. This is a contradiction, so we must have $f(x) \notin V$, and therefore $f(x) \in \partial V$.
	\end{proof}
	
	By Lemma \ref{lem:boundary_into_boundary}, the degree $\deg (f, y, U_f(x_0, \rho))$ is well-defined for all $y \notin \partial \B^n(f(x_0), \rho)$ whenever $U_f(x_0, \rho)$ is compactly contained in $\Omega$.  We then recall a standard surjectivity result for the sets $U_f(x_0, \rho)$ in cases where the degree does not vanish.
	
	\begin{lemma}\label{lem:Uf_surj}
		Let $\Omega \subset \R^n$ be open, let $x_0 \in \Omega$, and let $f \colon \Omega \to \R^n$ be continuous. Suppose that $\rho > 0$ is such that $\overline{U_f(x_0, \rho)}$ is a compact subset of $\Omega$, and that $\deg (f, f(x_0), U_f(x_0, \rho)) \neq 0$. Then $f U_f(x_0, \rho) = \B^n(f(x_0), \rho)$.
	\end{lemma}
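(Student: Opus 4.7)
The plan is to prove both inclusions for the set identity $f U_f(x_0, \rho) = \B^n(f(x_0), \rho)$, with the nontrivial one relying on topological degree theory.

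For brevity set $U = U_f(x_0, \rho)$ and $y_0 = f(x_0)$. The inclusion $fU \subset \B^n(y_0, \rho)$ is immediate from the definition of $U$ as a connected component of $f^{-1} \B^n(y_0, \rho)$.

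For the reverse inclusion, the key structural input is Lemma \ref{lem:boundary_into_boundary}, which tells us that $f \partial U \subset \partial \B^n(y_0, \rho)$. In particular, every point $y \in \B^n(y_0, \rho)$ lies in $\R^n \setminus f \partial U$, so the topological degree $\deg(f, y, U)$ is well-defined for all such $y$ (here we use the assumption that $\overline{U}$ is compactly contained in $\Omega$, which ensures that $\deg(f, \cdot, U)$ is defined in the first place).

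Now I would invoke Lemma \ref{lem:top_degree_props} \eqref{enum:deg_constant}: the map $y \mapsto \deg(f, y, U)$ is locally constant on $\R^n \setminus f \partial U$. Since $\B^n(y_0, \rho)$ is a connected subset of $\R^n \setminus f \partial U$ containing $y_0$, the degree is constant on this ball, and therefore
\[
\deg(f, y, U) = \deg(f, y_0, U) \neq 0 \quad \text{for every } y \in \B^n(y_0, \rho).
\]
Finally, by the vanishing property in Lemma \ref{lem:top_degree_props} \eqref{enum:deg_image}, any such $y$ must lie in $fU$. This yields $\B^n(y_0, \rho) \subset fU$ and completes the proof. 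There is no genuine obstacle here; the argument is a direct application of the degree-theoretic toolkit already assembled in the preliminaries.
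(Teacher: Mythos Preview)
Your proof is correct and follows essentially the same approach as the paper: both establish the trivial inclusion from the definition of $U_f(x_0,\rho)$, then use local constancy of the degree on the connected ball $\B^n(f(x_0),\rho)\subset \R^n\setminus f\partial U$ together with the vanishing property to obtain the reverse inclusion. You are slightly more explicit in invoking Lemma~\ref{lem:boundary_into_boundary} to justify that the degree is well-defined on the ball, but this is exactly what the paper notes in the paragraph preceding the lemma.
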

	\begin{proof}
		Since $U_f(x_0, \rho)$ is a component of $f^{-1} \B^n(f(x_0), \rho)$, we clearly have $f U_f(x_0, \rho) \subset \B^n(f(x_0), \rho)$. On the other hand, Lemma \ref{lem:top_degree_props} \eqref{enum:deg_constant} along with our assumption $\deg (f, f(x_0), U_f(x_0, \rho)) \neq 0$ implies that $\deg (f, y, U_f(x_0, \rho)) \neq 0$ for every $y \in \B^n(f(x_0), \rho)$. Thus, it follows from Lemma \ref{lem:top_degree_props} \eqref{enum:deg_image} that $\B^n(x_0, \rho) \subset f U_f(x_0, \rho)$.
	\end{proof}
	
	It follows that if $f^{-1}\{f(x_0)\}$ is discrete and $i(x_0, f) \neq 0$, then for small enough $\rho$, the sets $U_f(x_0, \rho)$ are strong normal neighborhoods of $x_0$.
	
	\begin{lemma}\label{lem:Ufs_are_strong_normal}
		Let $\Omega \subset \R^n$ be open, let $x_0 \in \Omega$, and let $f \colon \Omega \to \R^n$ be continuous. Suppose that $f^{-1}\{f(x_0)\}$ is discrete and $i(x_0, f) \neq 0$. Then there exists $\rho_0 = \rho_0(x_0, f_0)$ such that for every $\rho \leq \rho_0$, $U_f(x_0, \rho)$ is a strong normal neighborhood of $x_0$ under $f$. Moreover, $\diam U_f(x_0, \rho) \to 0$ as $\rho \to 0$.
	\end{lemma}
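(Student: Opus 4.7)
My plan is to split the argument into three parts: first produce a normal neighborhood, then show the diameter claim by a compactness argument, and finally promote to a strong normal neighborhood through a geometric bound on bounded complementary components. The last step is where the real work lies.

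For the normal-neighborhood part, I would use discreteness of $f^{-1}\{f(x_0)\}$ to pick $r > 0$ with $\overline{\B^n(x_0, r)} \subset \Omega$ and $\overline{\B^n(x_0, r)} \cap f^{-1}\{f(x_0)\} = \{x_0\}$, and set $\delta := \dist(f(x_0), f(\partial \B^n(x_0, r))) > 0$. For $\rho \in (0, \delta)$, the component $U_f(x_0, \rho)$ cannot reach $\partial \B^n(x_0, r)$, so $\overline{U_f(x_0, \rho)} \subset \B^n(x_0, r)$ and the fibre condition transfers. Then Lemma \ref{lem:boundary_into_boundary}, excision, and Lemma \ref{lem:Uf_surj} (using $i(x_0, f) \neq 0$) together verify the normal-neighborhood axioms. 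For the diameter claim I would argue by contradiction via Blaschke selection: if some subsequence had $\diam U_f(x_0, \rho_k) \geq d > 0$, the continua $\overline{U_f(x_0, \rho_k)}$ would subconverge in Hausdorff metric to a continuum $K \ni x_0$ of diameter $\geq d$, on which continuity of $f$ and $|f - f(x_0)| \leq \rho_k$ on $\overline{U_f(x_0, \rho_k)}$ would force $f \equiv f(x_0)$, contradicting discreteness.

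The hard part is the strong-normal condition: I must find a larger normal neighborhood $U_0$ containing every bounded component of $\R^n \setminus U_f(x_0, \rho)$, and the obstruction is that $f$ on such a component could a priori take values far from $f(x_0)$. I would fix $\rho_1 \in (0, \delta)$, take $U_0 := U_f(x_0, \rho_1)$, and prove the diameter bound $\diam V \leq \diam U_f(x_0, \rho)$ for each bounded component $V$: parametrising the line $\ell$ through two points $p, q \in V$, the extreme points of $V \cap \ell$ lie in $\partial V \subset \partial U_f(x_0, \rho) \subset \overline{U_f(x_0, \rho)}$ and bracket $p, q$ on $\ell$, so $|p - q| \leq \diam \overline{U_f(x_0, \rho)}$. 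Combined with $\partial V \subset \overline{U_f(x_0, \rho)} \ni x_0$, this places $V$ inside $\B^n(x_0, 2\diam U_f(x_0, \rho))$. Choosing $\rho_0$ via the diameter claim and continuity of $f$ at $x_0$ so that $f(V) \subset \B^n(f(x_0), \rho_1)$ whenever $\rho \leq \rho_0$, the observation $\partial U_f(x_0, \rho) \subset U_0$ (from boundary-to-boundary plus openness of $f^{-1}\B^n(f(x_0), \rho_1)$) gives $\partial V \subset V \cap U_0 \neq \emptyset$; then $V \cup U_0$ is a connected subset of $f^{-1}\B^n(f(x_0), \rho_1)$ containing $x_0$, forcing $V \subset U_0$, which completes the verification.
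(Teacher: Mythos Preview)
Your argument is correct, but the strong-normal step is considerably more elaborate than it needs to be, and the paper's route is shorter.

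For the first two parts you and the paper agree in spirit. The paper obtains the diameter claim by observing that $\bigcap_{\rho>0}\overline{U_f(x_0,\rho)}\subset f^{-1}\{f(x_0)\}\cap U_f(x_0,\rho_1)=\{x_0\}$ and invoking compactness of a nested family; your Blaschke-selection argument reaches the same conclusion with more machinery.

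The real divergence is in Part 3. You view the obstruction as controlling $f$ on a bounded complementary component $V$, and so you (i) bound $\diam V$ by a line/extreme-point argument, (ii) locate $V$ inside $\B^n(x_0,2\diam U_f(x_0,\rho))$, (iii) use continuity of $f$ to force $f(V)\subset\B^n(f(x_0),\rho_1)$, and (iv) run a connectivity argument through $\partial V\subset\partial U_f(x_0,\rho)\subset U_0$ to land $V$ inside $U_0$. All of this is valid, but none of it is needed. The paper simply sandwiches
\[
U_f(x_0,\rho_0)\subset \B^n(x_0,r_0)\subset U_f(x_0,\rho_1),
\]
which is possible by openness of $U_f(x_0,\rho_1)$ together with the diameter claim. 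Then for $\rho\le\rho_0$ one has $U_f(x_0,\rho)\subset\B^n(x_0,r_0)$, so $\R^n\setminus\B^n(x_0,r_0)$ is a connected subset of $\R^n\setminus U_f(x_0,\rho)$ and hence lies in the unbounded component; every bounded component $V$ is therefore contained in $\B^n(x_0,r_0)\subset U_f(x_0,\rho_1)$. This is pure topology and never looks at $f\vert_V$ at all. Your line argument for $\diam V\le\diam U_f(x_0,\rho)$ is a pleasant fact, but once you have $V\subset\B^n(x_0,2\diam U_f(x_0,\rho))$ you could have concluded immediately by choosing $\rho_0$ so that this ball sits inside $U_f(x_0,\rho_1)$, bypassing steps (iii)--(iv) entirely.
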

	\begin{proof}
		Since $f^{-1}\{f(x_0)\}$ is discrete, we may use \cite[Lemmas 3.1 and 3.4]{Kangasniemi-Onninen_1ptReshetnyak} to select a $\rho_1$ such that $U_f(x_0, \rho_1)$ is compactly contained in $\Omega$ and $\overline{U_f(x_0, \rho_1)} \cap f^{-1}\{f(x_0)\} = \{x_0\}$. Thus, the same is true for $U_f(x_0, \rho)$ whenever $\rho \in (0, \rho_1]$. By Lemma \ref{lem:boundary_into_boundary}, we also have $f \partial U_f(x_0, \rho) \subset \partial \B^n(f(x_0), \rho)$ for all $\rho > 0$. Moreover, by Lemma \ref{lem:Uf_surj}, the assumption $i(x_0, f) \neq 0$, and \eqref{eq:normal_neighborhood_degree}, we also have $f(x_0) \in \B^n(f(x_0), \rho) \subset \intr f U_f(x_0, \rho)$ whenever $\rho \in (0, \rho_1]$. Thus, $U_f(x_0, \rho)$ is a normal neighborhood of $x_0$ for all $\rho \leq \rho_1$.
		
		We then note that $\bigcap_{\rho > 0} \overline{U_f(x_0, \rho)}$ contains only points in $U_f(x_0, \rho_1)$ that $f$ maps to $f(x_0)$. The only such point is $x_0$, so we must have $\bigcap_{\rho > 0} \overline{U_f(x_0, \rho)} = \{x_0\}$. We thus obtain the claimed $\diam U_f(x_0, \rho) \to 0$ as $\rho \to 0$. Consequently, we may select $\rho_0 > 0$ such that there exists a ball $\B^n(x_0, r_0)$ with
		\[
			U_f(x_0, \rho_0) \subset \B^n(x_0, r_0) \subset U_f(x_0, \rho_1).
		\]
		Now, if $\rho \leq \rho_1$, $\R^n \setminus U_f(x_0, \rho)$ has only a single unbounded component which contains $\R^n \setminus \overline{\B^n}(x_0, r_0)$. So if $V$ is a bounded component of $\R^n \setminus U_f(x_0, \rho)$, then $V \subset \B^n(x_0, r_0) \subset U_f(x_0, \rho_1)$. Thus, for every $\rho \leq \rho_0$, $U_f(x_0, \rho)$ is a strong normal neighborhood of $x_0$.
	\end{proof}
	
	\subsection{Linear dilatations}\label{subsect:linear_dilatations}
	
	We recall the definition of linear dilatations for a continuous $f \colon \R^n \to \Omega$ at $x_0 \in \Omega$ from the introduction \eqref{eq:L_and_l}. That is, given $r > 0$, we let
	\begin{align*}
		L_f(x_0, r) &= \sup \left\{ \abs{y - f(x_0)} : y \in f \B^n(x_0, r)\right\} ,\\
		l_f(x_0, r) &= \inf \left\{ \abs{y - f(x_0)} : y \notin f \B^n(x_0, r)\right\}.
	\end{align*}
	In particular, $l_f(x_0, r)$ and $L_f(x_0, r)$ are respectively the largest and smallest radius such that
	\begin{equation}\label{eq:linear_distortion_ball_sandwich}
		\B^n(f(x_0), l_f(x_0, r)) \subset f(\B^n(x_0, r)) \subset \overline{\B^n}(f(x_0), L_f(x_0, r)).
	\end{equation}
	We also define the inverse linear dilatations of $f$ at $x_0$, which are instead defined for every $\rho > 0$ by
	\begin{align*}
		L_f^*(x_0, \rho) &= \sup \left\{ \abs{x - x_0} : x \in U_f(x_0, \rho) \right\} ,\\
		l_f^*(x_0, \rho) &= \inf \left\{ \abs{x - x_0} : x \notin U_f(x_0, \rho)\right\}.
	\end{align*}
	That is, analogously to \eqref{eq:linear_distortion_ball_sandwich}, $l_f^*(x_0, \rho)$ and $L_f^*(x_0, \rho)$ are respectively the largest and smallest radius such that
	\begin{equation}\label{eq:inverse_distortion_ball_sandwich}
		\B^n(x_0, l_f^*(x_0, \rho)) \subset U_f(x_0, \rho) \subset \B^n(x_0, L_f^*(x_0, \rho)).
	\end{equation}
	Note that a standard result yields that if $f \colon \Omega \to \R^n$ is quasiregular and non-constant, then
	\begin{align*}
		\limsup_{r \to 0} \frac{L_f(x_0, r)}{l_f(x_0, r)} \leq C < \infty
		\qquad \text{and} \qquad
		\limsup_{\rho \to 0} \frac{L_f^*(x_0, \rho)}{l_f^*(x_0, \rho)} \leq C < \infty,
	\end{align*}
	where $C = C(n, K, i(x_0, f))$; for details, see e.g.\ \cite[Section II.4]{Rickman_book}.
	
	We then point out the following result that ties the linear and inverse dilatations to each other.
	
	\begin{lemma}\label{lem:lin_inv_dil_cancel}
		Let $\Omega \subset \R^n$ be open, let $x_0 \in \Omega$, and let $f \colon \Omega \to \R^n$ be continuous. Let $\rho > 0$, and suppose that $\overline{U_f(x_0, \rho)}$ is a compact subset of $\Omega$. Then $L_f(x_0, l_f^*(x_0, \rho)) = \rho$.
	\end{lemma}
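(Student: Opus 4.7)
\bigskip

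\noindent\textbf{Proof plan for Lemma \ref{lem:lin_inv_dil_cancel}.}

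The plan is to prove both inequalities $L_f(x_0, l_f^*(x_0, \rho)) \leq \rho$ and $L_f(x_0, l_f^*(x_0, \rho)) \geq \rho$ by exploiting the defining ``ball sandwich'' \eqref{eq:inverse_distortion_ball_sandwich} and Lemma \ref{lem:boundary_into_boundary}. Set $r = l_f^*(x_0, \rho)$. Note that $r > 0$ since $x_0$ lies in the open set $U_f(x_0, \rho)$, and $r < \infty$ since $\overline{U_f(x_0, \rho)}$ is compact (so $U_f(x_0, \rho)$ is a bounded proper subset of $\R^n$).

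For the upper bound $L_f(x_0, r) \leq \rho$, I would use that by the definition of $r$ as the infimum of distances from $x_0$ to points outside $U_f(x_0, \rho)$, we have $\B^n(x_0, r) \subset U_f(x_0, \rho)$. Since $U_f(x_0, \rho)$ is by definition a connected component of $f^{-1} \B^n(f(x_0), \rho)$, it follows that $f \B^n(x_0, r) \subset \B^n(f(x_0), \rho)$, and hence $L_f(x_0, r) \leq \rho$ directly from the supremum definition of $L_f$.

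For the lower bound $L_f(x_0, r) \geq \rho$, the plan is to produce a boundary point $x^* \in \partial \B^n(x_0, r) \cap \partial U_f(x_0, \rho)$. By the definition of $r$ as an infimum, there is a sequence $x_j \in \R^n \setminus U_f(x_0, \rho)$ with $\abs{x_j - x_0} \to r$. This sequence is bounded, so after extracting a convergent subsequence we obtain a limit $x^*$ with $\abs{x^* - x_0} = r$ and $x^* \in \R^n \setminus U_f(x_0, \rho)$ (the complement being closed); on the other hand $x^* \in \overline{\B^n(x_0, r)} \subset \overline{U_f(x_0, \rho)}$, so $x^* \in \partial U_f(x_0, \rho)$. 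Now Lemma \ref{lem:boundary_into_boundary} applied with $V = \B^n(f(x_0), \rho)$ gives $f(x^*) \in \partial \B^n(f(x_0), \rho)$, i.e.\ $\abs{f(x^*) - f(x_0)} = \rho$. Approximating $x^*$ from the inside by a sequence $y_j \in \B^n(x_0, r)$ with $y_j \to x^*$ (for instance along the radial segment from $x_0$ to $x^*$) and using continuity of $f$ yields $f(y_j) \in f\B^n(x_0, r)$ with $\abs{f(y_j) - f(x_0)} \to \rho$, which forces $L_f(x_0, r) \geq \rho$ and completes the proof.

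The argument is almost entirely topological, and I do not expect any genuine obstacle; the only point that needs a little care is verifying that the limit point $x^*$ produced by the boundedness/compactness step indeed sits in both $\partial U_f(x_0, \rho)$ and $\partial \B^n(x_0, r)$, which the openness of $U_f(x_0, \rho)$ together with the infimum characterization of $r$ makes straightforward.
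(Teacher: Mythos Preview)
Your proposal is correct and follows essentially the same approach as the paper: both inequalities are obtained from the ball inclusion $\B^n(x_0, r) \subset U_f(x_0, \rho)$ together with the existence of a point $x^* \in \partial \B^n(x_0, r) \cap \partial U_f(x_0, \rho)$, which Lemma~\ref{lem:boundary_into_boundary} sends to $\partial \B^n(f(x_0), \rho)$. The only cosmetic difference is that the paper phrases the final step as continuity of $f$ on the closed ball $\overline{\B^n}(x_0, r)$ rather than your radial approximation from the inside, but this is the same argument.
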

	\begin{proof}
		By \eqref{eq:inverse_distortion_ball_sandwich}, $\B^n(x_0, l_f^*(x_0, \rho)) \subset U_f(x_0, \rho)$. Hence, $f\B^n(x_0, l_f^*(x_0, \rho)) \subset \B^n(f(x_0), \rho)$, and consequently $L_f(x_0, l_f^*(x_0, \rho)) \leq \rho$. 
		
		On the other hand, since $\B^n(x_0, l_f^*(x_0, \rho))$ is the largest ball contained in $U_f(x_0, \rho)$, we must have that $\partial \B^n(x_0, l_f^*(x_0, \rho))$ meets $\partial U_f(x_0, \rho)$, which in turn is mapped by $f$ into $\partial \B^n(f(x_0), \rho)$. Thus, $\abs{f(x_1) - f(x_0)} = \rho$ for some $x_1 \in \partial \B^n(x_0, l_f^*(x_0, \rho))$. As $f$ is continuous in $\overline{\B^n}(x_0, l_f^*(x_0, \rho))$, we thus conclude that $L_f(x_0, l_f^*(x_0, \rho)) \geq \rho$.
	\end{proof}
	
	Note that for our definitions of linear and inverse dilatations, this is the only equality of this type without extra assumptions on $f$ like openness: in general one only has $l_f(x_0, L_f^*(x_0, \rho)) \geq \rho$, and $r$ can be either larger or smaller than $L^*_f(x_0, l_f(x_0, r))$ or $l^*_f(x_0, L_f(x_0, r))$.
	
	\subsection{Intersections with boundaries of balls}
	
	We also use several technical lemmas on the interaction of $\B^n(x_0, r)$ and $U_f(x_0, \rho)$ if $l_f(x_0, r) < \rho <  L_f(x_0, r)$. The essential result is contained in the following lemma. 
	
	\begin{lemma}\label{lem:inner_and_outer_radius_prelemma}
		Let $\Omega \subset \R^n$ be open, let $x_0 \in \Omega$, and let $f \colon \Omega \to \R^n$ be continuous. Suppose that $f^{-1} \{f(x_0)\}$ is discrete, and that $i(x_0, f) \neq 0$. Let $r > 0$, and let $\rho > 0$ be such that $U_f(x_0, \rho)$ is a normal neighborhood of $x_0$. If $\rho > l_f(x_0, r)$, then $U_f(x_0, \rho) \setminus \B^n(x_0, r) \neq \emptyset$. Similarly, if $\rho < L_f(x_0, r)$, then $\B^n(x_0, r) \setminus U_f(x_0, \rho) \neq \emptyset$.
	\end{lemma}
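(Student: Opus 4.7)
The plan is to prove both claims by contradiction, leveraging the surjectivity statement of Lemma~\ref{lem:Uf_surj} on the image side and the definitions of $l_f$ and $L_f$ on the target side.

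For the first claim, I would assume toward contradiction that $U_f(x_0, \rho) \subset \B^n(x_0, r)$. Because $U_f(x_0, \rho)$ is by hypothesis a normal neighborhood of $x_0$, Lemma~\ref{lem:normal_nbhd_props} yields $\deg(f, f(x_0), U_f(x_0, \rho)) = i(x_0, f) \neq 0$, so Lemma~\ref{lem:Uf_surj} applies and gives $f U_f(x_0, \rho) = \B^n(f(x_0), \rho)$. Combining the contradiction hypothesis with this equality produces $\B^n(f(x_0), \rho) \subset f \B^n(x_0, r)$. On the other hand, the hypothesis $\rho > l_f(x_0, r)$ together with the definition of $l_f$ as an infimum supplies some $y \notin f \B^n(x_0, r)$ with $\abs{y - f(x_0)} < \rho$, that is, a point of $\B^n(f(x_0), \rho) \setminus f\B^n(x_0, r)$, contradicting the previous inclusion.

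For the second claim, I would again argue by contradiction and assume $\B^n(x_0, r) \subset U_f(x_0, \rho)$. Taking images, and using that $U_f(x_0, \rho)$ is a component of $f^{-1} \B^n(f(x_0), \rho)$ (whence $f U_f(x_0, \rho) \subset \B^n(f(x_0), \rho)$), gives $f \B^n(x_0, r) \subset \B^n(f(x_0), \rho)$. Taking the supremum in the definition of $L_f$ then forces $L_f(x_0, r) \leq \rho$, contradicting the hypothesis $\rho < L_f(x_0, r)$.

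I do not foresee any real obstacle: the content is essentially an unwinding of definitions once the right tools are isolated. The only step that genuinely requires the hypotheses $i(x_0, f) \neq 0$ and that $U_f(x_0, \rho)$ is a normal neighborhood is the invocation of Lemma~\ref{lem:Uf_surj} in the first claim, which upgrades the a priori inclusion $f U_f(x_0, \rho) \subset \B^n(f(x_0), \rho)$ to an equality; the second claim is purely set-theoretic and uses neither of these hypotheses.
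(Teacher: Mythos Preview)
Your proposal is correct and follows essentially the same route as the paper: both arguments invoke Lemma~\ref{lem:Uf_surj} to obtain $fU_f(x_0,\rho)=\B^n(f(x_0),\rho)$, then derive a contradiction from the definitions of $l_f$ and $L_f$ under the respective containment hypotheses. Your remark that the second claim only needs the trivial inclusion $fU_f(x_0,\rho)\subset\B^n(f(x_0),\rho)$, rather than the full equality, is a valid sharpening.
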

	\begin{proof}
		Since $U_f(x_0, \rho)$ is a normal neighborhood of $x_0$ and $i(x_0, f) \neq 0$, we have $\B^n(f(x_0), \rho) = f U_f(x_0, \rho)$ by Lemma \ref{lem:Uf_surj}. We then observe that if $\rho > l_f(x_0, r)$, there must exist a point $y_1 \in \B^n(f(x_0), \rho) = fU_f(x_0, \rho)$ such that $y_1 \notin f\B^n(x_0, r)$. This is impossible if $U_f(x_0, \rho) \subset \B^n(x_0, r)$, so we conclude that $U_f(x_0, \rho) \setminus \B^n(x_0, r) \neq \emptyset$. Similarly, if $\rho < L_f(x_0, r)$, there must exist a point $y_2 \in f\B^n(x_0, r)$ such that $y_2 \notin \B^n(f(x_0), \rho) = f U_f(x_0, \rho)$. This is in turn impossible if $\B^n(x_0, r) \subset U_f(x_0, \rho)$, so we conclude that $\B^n(x_0, r) \setminus U_f(x_0, \rho) \neq \emptyset$.
	\end{proof}
	
	For the inner dilatation, the following corollary follows nearly immediately.
	
	\begin{cor}\label{cor:inner_radius_lemma}
		Let $\Omega \subset \R^n$ be open, let $x_0 \in \Omega$, and let $f \colon \Omega \to \R^n$ be continuous. Suppose that $f^{-1} \{f(x_0)\}$ is discrete, and that $i(x_0, f) \neq 0$. Let $r > 0$, and let $\rho > 0$ be such that $U_f(x_0, \rho)$ is a normal neighborhood of $x_0$. If $\rho > l_f(x_0, r)$, then $U_f(x_0, \rho) \cap \partial \B^n(x_0, r) \neq \emptyset$.
	\end{cor}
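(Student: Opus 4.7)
The plan is to reduce the statement directly to Lemma \ref{lem:inner_and_outer_radius_prelemma} via a standard connectedness argument. Recall that $U_f(x_0, \rho)$ is by definition the connected component of the open set $f^{-1}\B^n(f(x_0), \rho)$ that contains $x_0$, and hence is itself a connected subset of $\Omega$ containing $x_0$.

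First, I would observe that $x_0 \in U_f(x_0, \rho) \cap \B^n(x_0, r)$, so the intersection $U_f(x_0, \rho) \cap \B^n(x_0, r)$ is non-empty. Next, applying Lemma \ref{lem:inner_and_outer_radius_prelemma} under the hypothesis $\rho > l_f(x_0, r)$, we obtain a point $x_1 \in U_f(x_0, \rho) \setminus \B^n(x_0, r)$, so the set $U_f(x_0, \rho) \setminus \overline{\B^n}(x_0, r)$ is either non-empty or $x_1 \in \partial \B^n(x_0, r)$; in the latter case the claim is already immediate.

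Assume then that $U_f(x_0, \rho) \cap \partial \B^n(x_0, r) = \emptyset$ with the aim of reaching a contradiction. Then
\[
    U_f(x_0, \rho) = \bigl(U_f(x_0, \rho) \cap \B^n(x_0, r)\bigr) \cup \bigl(U_f(x_0, \rho) \setminus \overline{\B^n}(x_0, r)\bigr),
\]
and the two sets on the right are disjoint, relatively open in $U_f(x_0, \rho)$, and both non-empty by the previous paragraph (the emptiness assumption forces $x_1 \notin \partial \B^n(x_0, r)$, so $x_1$ lies in the second piece). This contradicts connectedness of $U_f(x_0, \rho)$, and the corollary follows.

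There is essentially no obstacle here: the prelemma has already done the substantive work of producing a point of $U_f(x_0, \rho)$ outside $\B^n(x_0, r)$, and the corollary is just the topological observation that a connected set meeting both the interior and the exterior of a ball must meet its boundary.
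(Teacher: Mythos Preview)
Your proof is correct and follows essentially the same approach as the paper: both invoke Lemma \ref{lem:inner_and_outer_radius_prelemma} to produce a point of $U_f(x_0,\rho)$ outside $\B^n(x_0,r)$, and then use connectedness of $U_f(x_0,\rho)$ (which already contains $x_0$) to force an intersection with the sphere. You have simply written out the connectedness argument in more detail than the paper does.
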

	\begin{proof}
		The set $U_f(x_0, \rho)$ is connected, contains $x_0$, and meets $\R^n \setminus \B^n(x_0, r)$ by Lemma \ref{lem:inner_and_outer_radius_prelemma}. Therefore, $U_f(x_0, \rho)$ must intersect $\partial\B^n(x_0, r)$.
	\end{proof}
	
	However, for the outer dilatation, the situation is complicated by the fact that $\R^n \setminus U_f(x_0, \rho)$ could be disconnected for a non-open $f$. A major part of the proof of Theorem \ref{thm:limsup_lin_distortion} is about finding a way to work around this disconnectedness of $\R^n \setminus U_f(x_0, \rho)$.
	
	\section{Preliminaries on quasiregular values}
	
	We then recall our recent results on quasiregular values that this work is built on. The first major result we recall is the single-value Reshetnyak's theorem for quasiregular values, which is the main theorem of \cite{Kangasniemi-Onninen_1ptReshetnyak}.
	
	\begin{thm}[{\cite[Theorem 1.2]{Kangasniemi-Onninen_1ptReshetnyak}}]\label{thm:1p_Reshetnyak}
		Let $\Omega \subset \R^n$ be a domain and let $f \in W^{1,n}_\loc(\Omega, \R^n)$ be continuous. Suppose that $f$ has a $(K, \Sigma)$-quasiregular value at $y_0 \in \R^n$, where $K \geq 1$ and $\Sigma \in L^{1+\eps}_\loc(\Omega)$ with $\eps > 0$. Then either $f \equiv y_0$, or the following three conditions hold:
		\begin{enumerate}
			\item \label{enum:Reshetnyak_discrete} (Discreteness at $y_0$) $f^{-1}\{y_0\}$ is a discrete set;
			\item \label{enum:Reshetnyak_index} (Sense-preserving at $y_0$) for every $x_0 \in f^{-1}\{y_0\}$, $i(f, x_0) > 0$;
			\item \label{enum:Reshetnyak_open} (Openness at $y_0$) for every $x_0 \in f^{-1}\{y_0\}$, if $U \subset \Omega$ is an open neighborhood of $x_0$, then $y_0 \in \intr f(U)$.
		\end{enumerate}
	\end{thm}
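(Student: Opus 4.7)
The plan is to follow the template of classical Reshetnyak theory, with the role of the global bound $|Df|^n \leq K J_f$ replaced by the perturbed inequality~\eqref{eq:qrval_def}, which is tight precisely near the fiber $f^{-1}\{y_0\}$. After translating so that $y_0 = 0$ and assuming $f \not\equiv 0$, I would prove the three conclusions in the order (1), (2), (3), with discreteness being the crucial step. The central device is a capacity--style estimate: fix $x_0 \in f^{-1}\{0\}$, and on suitable annular condensers apply Lemma~\ref{lem:basic_adm_function_estimate} with $p = n$ to $f$ itself, then substitute~\eqref{eq:qrval_def} inside the integrand. This converts the capacity bound into a decomposition whose right hand side splits into a ``quasiregular'' term $\int K J_f/|f|^n$ and a ``perturbative'' term $\int \Sigma$, each of which can be independently controlled.

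For (1), suppose toward contradiction that zeros of $f$ accumulate at $x_0$. Choose a sequence of radii $0 < r_k < R_k \to 0$ such that $|f|$ is bounded below by $a_k$ on $\partial \B^n(x_0, r_k)$ and by $b_k$ on $\partial \B^n(x_0, R_k)$ with $b_k/a_k \to \infty$; such a choice is available for a.e.\ pair of radii via Fubini. Apply Lemma~\ref{lem:basic_adm_function_estimate} with $p = n$ on the annulus $A_k = \B^n(x_0, R_k) \setminus \overline{\B^n}(x_0, r_k)$ and insert~\eqref{eq:qrval_def} to obtain
\[
\capac\bigl(\overline{\B^n}(x_0, r_k), \B^n(x_0, R_k)\bigr) \leq \frac{K}{\log^n(b_k/a_k)} \int_{A_k} \frac{J_f}{|f|^n} + \frac{1}{\log^n(b_k/a_k)} \int_{A_k} \Sigma.
\]
The first integral is handled by the change of variables of Lemma~\ref{lem:change_of_vars} applied to the bounded function $v(y) = |y|^{-n} \chi_{\{a_k \leq |y| \leq b_k\}}$, reducing it to an integral of the local degree against $|y|^{-n}$ on a truncated range, which grows at most logarithmically in $b_k/a_k$. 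The second integral is $o(1)$ by Hölder's inequality and $\Sigma \in L^{1+\eps}_\loc(\Omega)$. The right hand side can hence be made arbitrarily small along the sequence, contradicting the uniform lower bound for the conformal capacity of a ring condenser given by Lemma~\ref{lem:Loewner}. This forces $x_0$ to be isolated in $f^{-1}\{0\}$.

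With discreteness in hand, Lemma~\ref{lem:Ufs_are_strong_normal} provides strong normal neighborhoods $U_f(x_0, \rho)$ for all sufficiently small $\rho > 0$, provided $i(x_0, f) \neq 0$. For (2), a refinement of the same capacity argument on $U_f(x_0,\rho)$ shows that $\deg(f, 0, U_f(x_0, \rho))$ cannot vanish: the change of variables gives $\int_{U_f(x_0,\rho)} J_f = i(x_0,f) \cdot \vol_n(\B^n(0, \rho))$ (after identifying the degree with the local index on a normal neighborhood), while~\eqref{eq:qrval_def} yields $\int J_f \geq K^{-1} \int |Df|^n - K^{-1} \rho^n \int \Sigma > 0$ for small $\rho$, forcing $i(x_0, f) \geq 1$. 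Conclusion (3) is then immediate from Lemma~\ref{lem:Uf_surj}, since $f U_f(x_0, \rho) = \B^n(0, \rho)$ for all small $\rho$ and $U_f(x_0, \rho)$ is contained in any prescribed neighborhood $U$ of $x_0$ once $\rho$ is small enough. The main obstacle is the mild circularity built into the capacity step: integrating $|Df|^n/|f|^n$ near $x_0$ presupposes control on where $f$ vanishes, which is exactly what one is trying to establish. Resolving this requires improving the integrability of $|Df|$ from $L^n_\loc$ to $L^{n+\delta}_\loc$ by applying Gehring's lemma (Proposition~\ref{prop:local_Gehring_lemma}) to a reverse Hölder inequality extracted from~\eqref{eq:qrval_def} via Sobolev--Poincaré, after which the critical-exponent estimates close with room to spare.
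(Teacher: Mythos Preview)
First, a framing point: this paper does not prove Theorem~\ref{thm:1p_Reshetnyak}; it is quoted from \cite{Kangasniemi-Onninen_1ptReshetnyak} as a prerequisite. So there is no ``paper's own proof'' to compare against here, and your proposal should be read as a standalone attempt.

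On its merits, the sketch has real gaps. The discreteness step~(1) is where the argument breaks down. You want to apply Lemma~\ref{lem:basic_adm_function_estimate} on an annulus $A_k$ around $x_0$, but that lemma requires $f$ to be continuous into $\R^n \setminus \{0\}$ on the closed annulus; under the contradiction hypothesis that zeros accumulate at $x_0$, there is no reason any annulus near $x_0$ avoids $f^{-1}\{0\}$, so the integrand $|Df|^n/|f|^n$ need not even be locally integrable there. You acknowledge this circularity, but the proposed cure --- bootstrapping $|Df|$ to $L^{n+\delta}_\loc$ via Gehring --- does not help: higher integrability of $|Df|$ says nothing about $|Df|^n/|f|^n$ once $|f|$ is allowed to vanish. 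Moreover, the sphere data is not set up consistently with Lemma~\ref{lem:basic_adm_function_estimate} (that lemma wants $|f|\leq a$ on $\partial U$ and $|f|\geq b$ on $\partial C$ with $a<b$), and it is unclear why one could arrange $b_k/a_k \to \infty$ at all: if zeros cluster at $x_0$, the infimum of $|f|$ on small spheres has no reason to grow relative to anything. Finally, the capacity lower bound you invoke is not uniform: $\capac\bigl(\overline{\B^n}(x_0,r_k),\B^n(x_0,R_k)\bigr) = C(n)\log^{1-n}(R_k/r_k)$ tends to $0$ unless $R_k/r_k$ stays bounded, which you have not arranged.

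Step~(2) is also circular as written: you invoke Lemma~\ref{lem:Ufs_are_strong_normal} to produce normal neighborhoods, but that lemma explicitly assumes $i(x_0,f)\neq 0$, which is precisely what you are trying to establish. And the positivity argument $\int_{U_f(x_0,\rho)} J_f \geq K^{-1}\int |Df|^n - K^{-1}\rho^n\int\Sigma > 0$ presupposes that $\int_{U_f(x_0,\rho)}|Df|^n$ does not vanish faster than $\rho^n$; without some nondegeneracy input you do not have this. The actual proof in \cite{Kangasniemi-Onninen_1ptReshetnyak} proceeds rather differently, and the machinery in the present paper (in particular Lemmas~\ref{lem:Ufs_are_strong_normal} and~\ref{lem:level_set_lemma}) already \emph{assumes} Theorem~\ref{thm:1p_Reshetnyak} as input, so it cannot be used to reprove it.
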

	
	We then also recall that mappings with quasiregular values inherit higher Sobolev regularity from $\Sigma$. The proof is a standard argument based on proving a reverse H\"older inequality and applying Gehring's lemma. The result we state here is \cite[Lemma 6.1]{Kangasniemi-Onninen_1ptReshetnyak}, though variants in slightly different settings can also be found in \cite[Lemma 2.1]{Dolezalova-Kangasniemi-Onninen_MGFD-cont} and \cite[Lemma 4.5]{Kangasniemi-Onninen_Picard}.
	
	\begin{lemma}\label{lem:qrval_higher_int}
		Let $\Omega \subset \R^n$ be a domain and let $f \in W^{1,n}_\loc(\Omega, \R^n)$. Suppose that $f$ has a $(K, \Sigma)$-quasiregular value at $y_0 \in \R^n$, where $K \geq 1$ and $\Sigma \in L^{1+\eps}_\loc(\Omega)$ with $\eps > 0$. Then $f \in W^{1,p}_\loc(\Omega, \R^n)$ for some $p > n$.
	\end{lemma}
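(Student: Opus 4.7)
The plan is to derive a reverse H\"older inequality for $\abs{Df}$ and then apply Iwaniec's local Gehring lemma (Proposition \ref{prop:local_Gehring_lemma}). The key ingredients are the pointwise bound \eqref{eq:qrval_def}, the distributional divergence identity for the Jacobian, and the Sobolev--Poincar\'e inequality.

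First I would establish a Caccioppoli-type estimate. For $\eta \in C_c^\infty(\Omega)$ and any constant $c \in \R^n$, the standard divergence identity for the Jacobian (valid for $f \in W^{1,n}_\loc(\Omega, \R^n)$, and allowing the subtraction of $c$ thanks to the distributional divergence-freeness of the rows of $\operatorname{cof}(Df)$) yields
\[
\left| \int_\Omega J_f \eta^n \, \dd x \right| \leq C(n) \int_\Omega \abs{f - c}\, \abs{Df}^{n-1} \eta^{n-1} \abs{\nabla \eta} \, \dd x.
\]
Applying Young's inequality with a small parameter, multiplying \eqref{eq:qrval_def} by $\eta^n$, integrating, and absorbing the resulting $\int \abs{Df}^n \eta^n$ contribution into the left-hand side produces
\[
\int_\Omega \abs{Df}^n \eta^n \, \dd x \leq C(n, K) \int_\Omega \abs{f - c}^n \abs{\nabla \eta}^n \, \dd x + 2\int_\Omega \abs{f - y_0}^n \Sigma \eta^n \, \dd x.
\]

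Next, given a cube $Q$ with $2Q \subset \Omega$, I would plug in a standard cutoff $\eta$ with $\eta \equiv 1$ on $Q$, $\spt \eta \subset 2Q$, and $\abs{\nabla \eta} \leq C/\diam Q$, and choose $c = f_{2Q}$. The Sobolev--Poincar\'e inequality with exponents $(n/2, n)$ (valid since $n \geq 2$) converts the first right-hand term into an expression involving $\dashint_{2Q} \abs{Df}^{n/2}$, and after normalizing and taking square roots one obtains
\[
\left(\dashint_Q \abs{Df}^n\right)^{1/2} \leq C(n, K) \dashint_{2Q} \abs{Df}^{n/2} + C\left(\dashint_{2Q} \abs{f - y_0}^n \Sigma\right)^{1/2}.
\]
Setting $g = \abs{Df}^{n/2}$, $h = C(\abs{f - y_0}^n \Sigma)^{1/2}$, and $p = 2$, this places the estimate in the form hypothesized by Proposition \ref{prop:local_Gehring_lemma} on any cube $Q_0 \subset \Omega$.

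Proposition \ref{prop:local_Gehring_lemma} then supplies a threshold $q_0 = q_0(n, K) > 2$ such that $g \in L^q_\loc(\Omega)$ for every $q \in (2, q_0)$ for which $h \in L^q_\loc(\Omega)$. Since $f$ is continuous, $\abs{f - y_0}^n$ is locally bounded, so $h \in L^q_\loc$ whenever $\Sigma \in L^{q/2}_\loc$. The hypothesis $\Sigma \in L^{1+\eps}_\loc$ therefore allows the choice of any $q \in (2, \min(q_0, 2+2\eps))$, giving $\abs{Df} \in L^{qn/2}_\loc$ with $qn/2 > n$, as desired. I expect the main delicate point to be the choice $c = f_{2Q}$ in the Caccioppoli: subtracting an arbitrary average rather than the fixed value $y_0$ in the divergence identity is what enables the Sobolev--Poincar\'e step, while the $y_0$-specific dependence is confined to the $\Sigma$-term, where the higher integrability of $\Sigma$ is precisely what closes the argument.
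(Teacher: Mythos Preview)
Your proposal is correct and follows precisely the strategy the paper indicates: the paper does not give its own proof here but cites \cite[Lemma 6.1]{Kangasniemi-Onninen_1ptReshetnyak} and describes the argument as ``proving a reverse H\"older inequality and applying Gehring's lemma,'' which is exactly your Caccioppoli-plus-Sobolev--Poincar\'e derivation followed by Proposition~\ref{prop:local_Gehring_lemma}. Your use of the continuous representative to guarantee that $\abs{f-y_0}^n$ is locally bounded (hence $h\in L^q_\loc$) is consistent with the paper's standing convention stated in the introduction, so there is no circularity.
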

	
	We note that by e.g.\ \cite[Lemma 8.1]{Bojarski-Iwaniec_QR}, we obtain the following consequence.
	
	\begin{lemma}\label{lem:qrval_Lusin_N}
		Let $\Omega \subset \R^n$ be a domain and let $f \in W^{1,n}_\loc(\Omega, \R^n)$. Suppose that $f$ has a $(K, \Sigma)$-quasiregular value at $y_0 \in \R^n$, where $K \geq 1$ and $\Sigma \in L^{1+\eps}_\loc(\Omega)$ with $\eps > 0$. Then $f$ satisfies the Lusin (N) -condition.
	\end{lemma}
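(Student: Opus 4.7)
The plan is to reduce the statement to a classical fact about continuous Sobolev maps with supercritical regularity. First I would apply Lemma \ref{lem:qrval_higher_int} to the hypothesis, obtaining an exponent $p > n$ for which $f \in W^{1,p}_\loc(\Omega, \R^n)$. Since the paper fixes $f$ to be its continuous representative throughout, this places $f$ in the class $C(\Omega, \R^n) \cap W^{1,p}_\loc(\Omega, \R^n)$ with $p > n$.

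Second, I would invoke the well-known fact, recorded as \cite[Lemma 8.1]{Bojarski-Iwaniec_QR}, that every continuous map in $W^{1,p}_\loc(\Omega, \R^n)$ with $p > n$ satisfies the Lusin (N) condition. The underlying mechanism, which I would not reproduce in detail, is Morrey's embedding: on any small ball $B(x, r)$ with $\overline{B(x, 2r)} \subset \Omega$, one has the H\"older estimate $\abs{f(y) - f(z)} \le C \norm{Df}_{L^p(B(x,2r))} \abs{y - z}^{1 - n/p}$ for $y, z \in B(x, r)$. Hence $f B(x, r)$ is contained in a ball of radius controlled by $r^{1 - n/p} \norm{Df}_{L^p(B(x, 2r))}$, so covering a null set $E$ by a Vitali-type family of such balls and applying H\"older's inequality with conjugate exponents $p/n$ and $p/(p-n)$ to the resulting sum of $n$-dimensional measures produces a bound of the form $C \bigl(\int_V \abs{Df}^p\bigr)^{n/p} \vol_n(V)^{(p-n)/p}$, where $V$ is a slight enlargement of the cover. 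Since $V$ can be chosen with arbitrarily small $n$-measure and $\abs{Df}^p$ is locally integrable, this forces $\vol_n(f(E)) = 0$.

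There is no genuine obstacle here; the entire substance of the lemma is packaged inside Lemma \ref{lem:qrval_higher_int}, and the remainder is a standard consequence of Morrey's inequality quoted as a black box from \cite{Bojarski-Iwaniec_QR}.
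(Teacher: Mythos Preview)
Your proposal is correct and matches the paper's own argument exactly: the paper also deduces the lemma as an immediate consequence of Lemma~\ref{lem:qrval_higher_int} together with \cite[Lemma 8.1]{Bojarski-Iwaniec_QR}. Your additional sketch of the Morrey-inequality mechanism is accurate but not needed, since the paper simply cites the result as a black box.
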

	
	\subsection{Uniform H\"older continuity}
	
	It follows from Lemma \ref{lem:qrval_higher_int} that if a map $f \in W^{1,n}_\loc(\Omega, \R^n)$ has a $(K, \Sigma)$-quasiregular value with $\Sigma \in L^{1+\eps}_\loc(\Omega)$, $\eps > 0$, then the map has a H\"older continuous representative. However, this approach does not yield the optimal degree of H\"older continuity of such mappings. For this, one instead uses an alternate approach presented in \cite[Theorem 1.1]{Kangasniemi-Onninen_Heterogeneous}; see also \cite[Theorem 1.2]{Dolezalova-Kangasniemi-Onninen_MGFD-cont} for a version under even weaker regularity assumptions for $\Sigma$.
	
	In the proof of the rescaling principle, our arguments use a uniform version of these H\"older continuity results. This result has not been explicitly stated previously, but it follows immediately from the proof of \cite[Theorem 1.1]{Kangasniemi-Onninen_Heterogeneous} if one tracks the exact dependencies of the constants. We state a version that is sufficient for us and outline the main points of the proof.
	
	\begin{lemma}\label{lem:qrval_local_holder}
		Let $f \in W^{1,n}(\B^n, \R^n) \cap L^\infty(\B^n, \R^n)$ be continuous. Suppose that $f$ has a $(K, \Sigma)$-quasiregular value at $y_0 \in \R^n$, where $K \geq 1$ and $\Sigma \in L^{1+\eps}(\B^n)$ with 
		\[
			0 < \eps < \frac{1}{K-1},
		\]
		where we interpret $1/(K-1) = \infty$ if $K = 1$. Let $A \geq 0$ be such that
		\[
			\max \left( \norm{f-y_0}_{L^\infty(\B^n)}, \norm{Df}_{L^n(\B^n)}, \norm{\Sigma}_{L^{1+\eps}(\B^n)} \right) \leq A
		\] 
		Then for every $r \in (0, 1)$, we have
		\[
			\abs{f(x) - f(y)} \leq C(n, A, \eps, r) \abs{x-y}^\frac{\eps}{1+\eps}
		\] 
		for all $x, y \in \B^n(0, r)$.
	\end{lemma}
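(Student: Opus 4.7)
The plan is to reprise the proof of \cite[Theorem 1.1]{Kangasniemi-Onninen_Heterogeneous}, tracking the constants to confirm they depend only on $n$, $K$, $\eps$, $r$, and the norm bound $A$. I recall the core estimate in that proof: a Morrey-type oscillation decay
\[
\osc(f, \B^n(x, \rho)) \leq C \rho^{\eps/(1+\eps)}
\]
for balls $\B^n(x, 2\rho)$ compactly contained in $\B^n$. Once this is in hand, pointwise Hölder continuity on $\B^n(0, r)$ follows from a standard chaining argument across dyadic scales, where the restriction $r < 1$ ensures that all intermediate balls of radius $\rho \leq (1-r)/2$ stay inside $\B^n$.

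For the oscillation estimate, I would apply Lemma~\ref{lem:basic_adm_function_estimate} to the map $f - y_0$ on the annular condenser $(\overline{\B^n(x, \rho)}, \B^n(x, 2\rho))$. This yields an upper bound for $\capac$ involving $\int \abs{Df}^n / \abs{f - y_0}^n$, which the quasiregular value inequality, after division by $\abs{f - y_0}^n$, splits into a Jacobian piece $K \int J_f / \abs{f - y_0}^n$ and a $\Sigma$-piece $\int \Sigma$. The $\Sigma$-piece is handled by Hölder's inequality, producing the bound $A \cdot (\vol_n \B^n(x, 2\rho))^{\eps/(1+\eps)}$; this is the origin of the claimed Hölder exponent. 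The Jacobian piece is processed via the change-of-variables formula (Lemma~\ref{lem:change_of_vars}), which applies by the higher integrability of Lemma~\ref{lem:qrval_higher_int} and the Lusin (N) -property of Lemma~\ref{lem:qrval_Lusin_N}; its contribution is controlled by a degree bound together with $\norm{f - y_0}_\infty \leq A$. Comparing this upper bound against a Loewner-type lower bound on capacity in the spirit of Lemma~\ref{lem:ring_condenser_capacity} then yields the desired decay.

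The main obstacle is purely bookkeeping: every constant appearing in the chain must be exhibited as a function of only $n$, $K$, $\eps$, and $A$. The hypothesis $\eps < 1/(K-1)$ is precisely what ensures the capacity comparison yields a positive Hölder exponent, since otherwise the reverse Hölder bootstrap that feeds into the Morrey estimate degenerates. The dependence on $r$ enters only through the number of dyadic scales needed to chain $x$ to $y$ inside $\B^n(0, r)$, which contributes a geometric series with ratio $2^{-\eps/(1+\eps)}$ summing to a constant $C(n, A, \eps, r)$. No new conceptual input beyond the cited reference is required; the statement is essentially a quantification of the result there.
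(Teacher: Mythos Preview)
Your proposal does not follow the proof of \cite[Theorem~1.1]{Kangasniemi-Onninen_Heterogeneous}, and the capacity route you sketch has a genuine gap. The argument in that reference (and in the paper) does not go through Lemma~\ref{lem:basic_adm_function_estimate} or a Loewner comparison. Instead, one fixes $x_1\in\B^n(0,r)$, sets $\Phi(s)=\int_{\B^n(x_1,s)}\abs{Df}^n$, and uses the isoperimetric inequality for Sobolev maps to obtain the differential inequality
\[
\Phi(s)\;\le\;\frac{Ks}{n}\,\Phi'(s)+\int_{\B^n(x_1,s)}\abs{f-y_0}^n\Sigma
\;\le\;\frac{Ks}{n}\,\Phi'(s)+C(n,\eps)A^2\,s^{\frac{n\eps}{1+\eps}}.
\]
The hypothesis $\eps<1/(K-1)$ is equivalent to $n\eps/(1+\eps)<n/K$, which is exactly the threshold that allows a Gronwall-type lemma (\cite[Lemma~3.3]{Kangasniemi-Onninen_Heterogeneous}) to integrate this into $\Phi(s)\le C(n,\eps,A,r)\,s^{n\eps/(1+\eps)}$. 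Morrey's modulus-of-continuity estimate then gives the H\"older bound.

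Your capacity argument breaks down at several points. First, Lemma~\ref{lem:basic_adm_function_estimate} requires $f-y_0$ to be nonvanishing on the annulus and to satisfy $\abs{f-y_0}\le a$ on the outer sphere and $\abs{f-y_0}\ge b$ on the inner sphere with $a<b$; for a generic center $x\in\B^n(0,r)$ there is no reason any of this holds, and at points of $f^{-1}\{y_0\}$ the integrand $\abs{Df}^n/\abs{f-y_0}^n$ is not even defined. Second, the condenser $(\overline{\B^n(x,\rho)},\B^n(x,2\rho))$ has conformal capacity equal to a fixed dimensional constant, so matching upper and lower bounds on it cannot produce decay in $\rho$. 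Third, the Jacobian term $\int K J_f/\abs{f-y_0}^n$ over an arbitrary annulus is not controlled by Lemma~\ref{lem:change_of_vars} alone; the degree computations in Lemma~\ref{lem:level_set_lemma} rely on the annulus being bounded by level sets $U_f(x_0,\rho)$, not round spheres. The oscillation of $f$ at a general point is simply not tied to $\abs{f-y_0}$ in the way your sketch assumes.
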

	\begin{proof}
		Let $r \in (0, 1)$, and let $s_0 = (1-r)/2$. We fix an $x_1 \in \B^n(0, r)$, and note that $\B^n(x_1, s_0)$ is compactly contained in $\B^n$. For $s \leq s_0$, we denote $B_{s} = \B^n(x_1, s)$ and
		\[
			\Phi(s) = \int_{B_s} \abs{Df}^n.
		\]
		Note that $\Phi$ is defined on $[0, s_0]$, and 
		\begin{equation}\label{eq:Phi_maximum_bound}
			\Phi(s_0) \leq \int_{\B^n} \abs{Df}^n \leq A.
		\end{equation}
		
		It is shown in \cite[Lemma 3.1]{Kangasniemi-Onninen_Heterogeneous} using the isoperimetric inequality of Sobolev functions and H\"older's inequality that
		\[
		\Phi(s) \leq \frac{Ks}{n} \Phi'(s) + \int_{B_s} \abs{f - y_0}^n \Sigma_r.
		\]
		for a.e.\ $s \in (0, s_0)$. The last term can be estimated by
		\[
			\int_{B_s} \abs{f - y_0}^n \Sigma_r 
			\leq \norm{f-y_0}_{L^\infty(\B^n)} \norm{\Sigma}_{L^{1+\eps}(\B^n)} \abs{B_s}^\frac{\eps}{1 + \eps}
			\leq C(n, \eps) A^2 s^\frac{n\eps}{1+\eps}.
		\]
		Thus, we have an estimate on $\Phi_r(s)$ of the form
		\begin{equation}\label{eq:Phi_estimate}
			\Phi(s) \leq \frac{Ks}{n} \Phi'(s) +  \leq C(n, \eps) A^2 s^\frac{n\eps}{1+\eps}.
		\end{equation}
		
		Now, since $\Phi \colon [0, s_0] \to [0, A]$ satisfies \eqref{eq:Phi_estimate}, and since $n\eps/(1+\eps) < n/K$ by our assumption that $\eps < 1/(K-1)$, we may apply \cite[Lemma 3.3]{Kangasniemi-Onninen_Heterogeneous} to obtain the estimate
		\begin{equation}\label{eq:Phi_estimate_2}
			\Phi(s) \leq C(n, \eps, s_0, A) s^\frac{n\eps}{1+\eps}.
		\end{equation}
		Since \eqref{eq:Phi_estimate_2} holds for all $x_1 \in \B^n(0, r)$ and $s \in [0, s_0]$, we may thus use a modulus of continuity estimate by Morrey \cite[Theorem 3.5.2]{Morrey-Book} to conclude that
		\[
		\abs{h_r(z) - h_r(z')} \leq C(n, \eps, s_0, A) \abs{z-z'}^\frac{\eps}{1+\eps},
		\]
		for all $x_1 \in \B^n(0, r)$ and $z, z' \in \B^n(x_1, s_0/3)$. Since $s_0 = (1-r)/2$ is dependent only on $r$, we obtain the desired H\"older continuity estimate on $\B^n(x_0, r)$ by covering $\B^n(x_0, r)$ with finitely many balls of the form $\B^n(x_1, s_0/3)$ and by chaining together finitely many of the above H\"older estimates.
	\end{proof}
	
	\subsection{Level set integrals}
	
	Results in the theory of quasiregular values often rely on using various arguments based on degree theory to analyze integrals of $J_f/\abs{f - y_0}^n$. We collect the key results of this type that we need into the following Lemma. The proof is a relatively standard argument based on degree theory and changes of variables, but we regardless present the argument
	
	\begin{lemma}\label{lem:level_set_lemma}
		Let $\Omega \subset \R^n$ be a domain, let $x_0 \in \Omega$, and let $f \in W^{1,n}_\loc(\Omega, \R^n)$ be a non-constant continuous map. Suppose that $f$ has a $(K, \Sigma)$-quasiregular value at $f(x_0)$, where $K \geq 1$ and $\Sigma \in L^{1+\eps}_\loc(\Omega)$ with $\eps > 0$. Suppose that $\rho$ is small enough that $U_f(x_0, \rho)$ is a strong normal neighborhood of $x_0$ under $f$. Then the following results hold.
		\begin{enumerate}[label=(\roman*)]
			\item\label{enum:level_set_integral_over_V} if $V_\rho$ is a bounded component of $\R^n \setminus U_f(x_0, \rho)$, then
			\[
			\int_{V_\rho} \frac{J_f}{\abs{f - f(x_0)}^n} = 0.
			\] 
			\item\label{enum:level_set_integral_over_U} If $\rho' < \rho$, then
			\[
			\int_{U_f(x_0, \rho) \setminus U_f(x_0, \rho')} \frac{J_f}{\abs{f - f(x_0)}^n} = C(n) i(x_0, f) \log \frac{\rho}{\rho'}.
			\] 
		\end{enumerate}
	\end{lemma}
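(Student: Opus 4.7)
The plan is to derive both identities from the change of variables formula (Lemma~\ref{lem:change_of_vars}) applied to suitable open sets, combined with the degree properties of Lemma~\ref{lem:top_degree_props}. The hypotheses of Lemma~\ref{lem:change_of_vars} are supplied by Lemmas~\ref{lem:qrval_higher_int} and \ref{lem:qrval_Lusin_N}, giving $f \in W^{1,p}_\loc(\Omega, \R^n)$ for some $p > n$ together with the Lusin (N)-property. I will also repeatedly use the consequence of Lusin (N) and the area formula that $\int_E J_f\,dx = 0$ whenever $f(E)$ has $\mathcal{L}^n$-measure zero; this lets me discard boundary pieces such as $\partial U_f(x_0, \rho)$ and $\partial U_f(x_0, \rho')$, since Lemma~\ref{lem:boundary_into_boundary} forces their $f$-images into the $(n-1)$-spheres $\partial \B^n(f(x_0), \rho)$ and $\partial \B^n(f(x_0), \rho')$.

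For part~(i), the strong normal neighborhood property supplies a larger normal neighborhood $U_0$ containing every bounded component of $\R^n \setminus U_f(x_0, \rho)$, with $f^{-1}\{f(x_0)\} \cap \overline{U_0} = \{x_0\} \subset U_f(x_0, \rho)$. Hence $f$ avoids $f(x_0)$ on the compact set $\overline{V_\rho}$, giving a positive lower bound $|f - f(x_0)| \geq \delta > 0$ there, and with the bounded test function $v(y) = |y-f(x_0)|^{-n} \chi_{\{|y-f(x_0)|>\delta/2\}}$ we have $v \circ f = |f - f(x_0)|^{-n}$ on $\intr V_\rho$. After discarding $\partial V_\rho \subseteq \partial U_f(x_0, \rho)$, it suffices by Lemma~\ref{lem:change_of_vars} to show that $\deg(f, \cdot, \intr V_\rho) \equiv 0$ on $\R^n \setminus \partial \B^n(f(x_0), \rho)$. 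To establish this, I would form the open set $U_1 = U_f(x_0, \rho) \sqcup \intr V_\rho$ (disjoint by construction) whose boundary again lies in $\partial U_f(x_0, \rho)$, and apply Lemma~\ref{lem:top_degree_props}\,\eqref{enum:deg_addition} to obtain $\deg(f, \cdot, U_1) = \deg(f, \cdot, U_f(x_0, \rho)) + \deg(f, \cdot, \intr V_\rho)$. Since $f(x_0) \notin f\,\intr V_\rho$, Lemma~\ref{lem:top_degree_props}\,\eqref{enum:deg_image} and local constancy pin both $\deg(f, \cdot, U_1)$ and $\deg(f, \cdot, U_f(x_0, \rho))$ to the value $i(x_0, f)$ on $\B^n(f(x_0), \rho)$ and to $0$ outside $\overline{\B^n}(f(x_0), \rho)$, so the remaining term $\deg(f, \cdot, \intr V_\rho)$ must vanish.

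For part~(ii), I would set $U' = U_f(x_0, \rho) \setminus \overline{U_f(x_0, \rho')}$, discard $(U_f(x_0, \rho) \setminus U_f(x_0, \rho')) \setminus U' \subseteq \partial U_f(x_0, \rho')$ by Lusin (N), and combine excision with additivity on the disjoint open union $U' \sqcup U_f(x_0, \rho') = U_f(x_0, \rho) \setminus \partial U_f(x_0, \rho')$ to obtain $\deg(f, \cdot, U') = \deg(f, \cdot, U_f(x_0, \rho)) - \deg(f, \cdot, U_f(x_0, \rho'))$, which equals $i(x_0, f)$ on the annulus $\B^n(f(x_0), \rho) \setminus \overline{\B^n}(f(x_0), \rho')$ and $0$ elsewhere. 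With the bounded test function $v(y) = |y-f(x_0)|^{-n} \chi_{\B^n(f(x_0), \rho) \setminus \overline{\B^n}(f(x_0), \rho')}(y)$, Lemma~\ref{lem:change_of_vars} converts $\int_{U'} v(f) J_f$ into $i(x_0, f)\, \sigma_{n-1} \log(\rho/\rho')$ by polar coordinates on the image side, identifying the constant as $C(n) = \sigma_{n-1}$, the surface area of $\S^{n-1}$.

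The main obstacle in part~(ii) is that $v \circ f$ differs from $|f - f(x_0)|^{-n}$ on the part of $U'$ where $|f - f(x_0)| \leq \rho'$: because $U_f(x_0, \rho)$ is not known to be a topological ball, $U'$ may contain entire additional components $W$ of $f^{-1} \B^n(f(x_0), \rho')$ on which $v \circ f$ vanishes. I would absorb these by rerunning the argument of part~(i) on each such $W$; the strong normal neighborhood property forces $x_0 \notin \overline{W}$ and $f(x_0) \notin fW$, so Lemma~\ref{lem:top_degree_props}\,\eqref{enum:deg_image} with local constancy yields $\deg(f, \cdot, W) \equiv 0$ on $\B^n(f(x_0), \rho')$, and Lemma~\ref{lem:change_of_vars} applied to $W$ with a suitably truncated reciprocal test function gives $\int_W J_f/|f - f(x_0)|^n\,dx = 0$, killing the spurious contributions that do not show up on the image side.
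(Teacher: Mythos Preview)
Your approach is correct and follows the same overall strategy as the paper: both proofs establish the identities via the change of variables formula of Lemma~\ref{lem:change_of_vars} together with the degree calculus of Lemma~\ref{lem:top_degree_props}, discarding the boundary pieces $\partial U_f(x_0,\rho)$ and $\partial U_f(x_0,\rho')$ by Lusin~(N). For part~(i) your additivity trick with $U_1 = U_f(x_0,\rho)\sqcup\intr V_\rho$ is a valid variant of the paper's more direct observation that $\deg(f,\cdot,\intr V_\rho)$ vanishes on both components of $\R^n\setminus\partial\B^n(f(x_0),\rho)$ because $f(x_0)\notin f V_\rho$ (inside) and the image is bounded (outside).

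The one place where the paper is more economical is part~(ii). You take $v$ to be $\abs{y-f(x_0)}^{-n}$ times the characteristic function of the annulus, which forces you to handle separately the extra components $W\subset U'$ of $f^{-1}\B^n(f(x_0),\rho')$ on which $v\circ f$ vanishes. The paper instead uses the globally defined truncation
\[
v(y)=\min\bigl(\abs{y-f(x_0)}^{-n},\,m^{-n}\bigr),\qquad m=\min_{\overline{D}}\abs{f-f(x_0)}>0,
\]
so that $v\circ f=\abs{f-f(x_0)}^{-n}$ identically on $D=U_f(x_0,\rho)\setminus\overline{U_f(x_0,\rho')}$. Since $m\le\rho'$ (the closure of $D$ meets $\partial U_f(x_0,\rho')$), one still has $v(y)=\abs{y-f(x_0)}^{-n}$ on the annulus $\rho'<\abs{y-f(x_0)}<\rho$, while the contribution from $\B^n(f(x_0),\rho')$ drops out automatically because $\deg(f,\cdot,D)=0$ there. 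This single choice of $v$ eliminates the need for your final paragraph; your handling of the $W$'s is correct, just unnecessary with the smarter test function.
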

	\begin{proof}
		By Theorem \ref{thm:1p_Reshetnyak}, $f^{-1}\{f(x_0)\}$ is a discrete subset of $\Omega$ and $i(f, x) > 0$ for every $x \in f^{-1}\{f(x_0)\}$. Thus, by Lemma \ref{lem:Ufs_are_strong_normal}, $U_f(x_0, \rho)$ is indeed a strong normal neighborhood of $x_0$ for small enough $\rho$. 
		
		We then fix $\rho$, and let $V_\rho$ be a bounded component of $\R^n \setminus U_f(x_0, \rho)$. Observe that $\partial V_\rho \subset \partial U_f(x_0, \rho)$, and hence $f \partial V_\rho \subset \partial \B^n(f(x_0), \rho)$. By Lemma \ref{lem:normal_nbhd_props}, $f(x_0) \notin fV_\rho$. Thus, $\deg(f, y, \intr V_\rho) = \deg(f, f(x_0), \intr V_\rho) = 0$ for all $y \in \B^n(f(x_0), \rho)$ by parts \eqref{enum:deg_constant} and \eqref{enum:deg_image} of Lemma \ref{lem:top_degree_props}. Moreover, since $V_\rho$ is a compact subset of $\Omega$, $\abs{f - f(x_0)}$ has a maximal value $M$ on it, and if we pick any $y_1 \notin \B^n(f(x_0), M+1)$, we can similarly argue that $\deg(f, y, \intr V_\rho) = \deg(f, y_1, \intr V_\rho) = 0$ for all $y \notin \overline{\B^n}(f(x_0), \rho)$. 
		
		Now, we apply the change of variables -formula of Lemma \ref{lem:change_of_vars}, wherein we use $v(y) = \min(\abs{y - f(x_0)}^{-n}, m^{-n})$ with $m = \min_{x \in V_\rho} \abs{f(x) - f(x_0)} > 0$ to ensure $v \in L^\infty(\R^n)$. Thus,
		\[
		\int_{\intr V_\rho} \frac{J_f}{\abs{f - f(x_0)}^n}
		= \int_{\R^n \setminus \partial \B^n(f(x_0), \rho)} v \cdot \deg(f, \cdot, \intr(V_\rho)) = 0.
		\]
		It thus remains to consider the integral of $J_f/\abs{f - f(x_0)}^n$ over $\partial V_\rho$. However, the image $f \partial V_\rho$ of $V_\rho$ is contained in the null-set $\partial \B^n(f(x_0), \rho)$, so by e.g.\ \cite[Theorems 5.6 and 5.21]{Fonseca-Gangbo-book}, $J_f$ vanishes a.e.\ in $\partial V_\rho$. Thus,
		\[
		\int_{\partial V_\rho} \frac{J_f}{\abs{f - f(x_0)}^n} = 0,
		\]
		completing the proof of \ref{enum:level_set_integral_over_V}.
		
		It remains to prove \ref{enum:level_set_integral_over_U}. Suppose that $\rho' < \rho$, and denote $D = U_f(x_0, \rho) \setminus \overline{U_f(x_0, \rho')}$. Then $\deg(f, y, D) = 0$ for $y \notin \overline{\B^n}(f(x_0), \rho)$ by the image set part \eqref{enum:deg_image} of Lemma \ref{lem:top_degree_props}. Similarly, if $y \in \B^n(f(x_0), \rho) \setminus \overline{\B^n}(f(x_0), \rho')$, then $\deg(f, y, D) = i(x_0, f)$ by \eqref{eq:normal_neighborhood_degree} along with the excision and local constancy parts \eqref{enum:deg_excision} and \eqref{enum:deg_constant} of Lemma \ref{lem:top_degree_props}. Moreover, since $f^{-1}\{f(x_0)\} \cap D = \emptyset$, $\deg(f, y, D) = 0$ for $y \in \B^n(f(x_0), \rho')$ by the local constancy and image set parts \eqref{enum:deg_constant} and \eqref{enum:deg_image} of Lemma \ref{lem:top_degree_props}.
		
		By another application of the change of variables formula from Lemma \ref{lem:change_of_vars} as before, we obtain
		\begin{multline*}
			\int_D \frac{J_f}{\abs{f - f(x_0)}^n} = \int_{\B^n(f(x_0), \rho) \setminus \B^n(f(x_0), \rho')} \frac{i(x_0, f)}{\abs{y - f(x_0)}^n} \vol_n(y)\\
			= C(n) i(x_0, f) \int_{\rho'}^\rho \frac{r^{n-1} \dd r}{r^n}
			= C(n) i(x_0, f) \log \frac{\rho}{\rho'}.
		\end{multline*}
		Moreover, the only difference between $D$ and $U_f(x_0, \rho) \setminus U_f(x_0, \rho')$ is the exclusion of $\partial U_f(x_0, \rho')$, but $J_f$ again vanishes a.e.\ in $\partial U_f(x_0, \rho')$ since $f(\partial U_f(x_0, \rho'))$ is contained in the nullset $\partial \B^n(f(x_0), \rho')$. Thus, the proof of \ref{enum:level_set_integral_over_U} is complete.
	\end{proof}
	
	\subsection{A higher regularity result}
	
	We also use the following higher regularity result.
	
	\begin{lemma}\label{lem:reverse_Holder}
		Let $\Omega \subset \R^n$ be open, let $y_0 \in \R^n$, and let $f \in W^{1,n}_\loc(\Omega, \R^n)$ be a non-constant continuous map with $y_0 \notin f \Omega$. Suppose that $f$ has a $(K, \Sigma)$-quasiregular value at $y_0$, where $K \geq 1$ and $\Sigma \in L^{1+\eps}_\loc(\Omega)$ with $\eps > 0$. Then for every cube $Q$ such that $2Q \subset \Omega$, we have
		\[
			\dashint_{Q} \frac{\abs{Df}^n}{\abs{f-y_0}^n} \lesssim_n K \left( \dashint_{2Q} \frac{\abs{Df}^{\frac{n^2}{n + 1}}}{\abs{f-y_0}^{\frac{n^2}{n + 1}}} \right)^\frac{n + 1}{n} + \dashint_{2Q} \Sigma.
		\]
	\end{lemma}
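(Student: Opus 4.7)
The plan is: reduce the inequality via the quasiregular value bound to an estimate for $\int_Q J_f/|f-y_0|^n$, rewrite this in divergence form using the fact that $|y-y_0|^{-n}\,dy$ is exact on $\R^n\setminus\{y_0\}$, and close with a Hölder--Sobolev chain in which the exponent $n^2/(n+1)$ emerges naturally.

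The starting point is to divide $|Df|^n\le KJ_f+|f-y_0|^n\Sigma$ by $|f-y_0|^n$, which reduces the claim to bounding $\int_Q J_f/|f-y_0|^n$ from above. The key observation is the identity $|y-y_0|^{-n}\,dy = d(\log|y-y_0|\cdot \sigma)$ on $\R^n\setminus\{y_0\}$, where $\sigma$ is the closed $(n-1)$-form obtained by contracting the Euclidean volume form with $V(y)=(y-y_0)/|y-y_0|^n$; in particular $|\sigma(y)|\le |y-y_0|^{-(n-1)}$. Choose a cutoff $\eta\in C_c^\infty(2Q)$ with $\eta\equiv 1$ on $Q$ and $|\nabla\eta|\lesssim_n |Q|^{-1/n}$; pulling back by $f$ and applying Stokes' theorem, integration by parts yields
\[
\int \eta^n\, \frac{J_f}{|f-y_0|^n}\,\dd x = -n\int \eta^{n-1}\,d\eta\wedge \bigl(\log|f-y_0|-c\bigr)\,f^*\sigma
\]
for any constant $c$, the freedom to subtract $c$ coming from the closedness $d\sigma=0$ (equivalently Piola's identity for $V$). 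That $d(f^*\sigma)=0$ holds in the distributional sense is exactly where Sobolev regularity enters, and this is supplied by Lemma~\ref{lem:qrval_higher_int}, which upgrades $f\in W^{1,n}_\loc$ to $f\in W^{1,p}_\loc$ for some $p>n$.

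Taking $c=(\log|f-y_0|)_{2Q}$ and using $|f^*\sigma|\le |Df|^{n-1}/|f-y_0|^{n-1}$, I apply Hölder with dual exponents $n^2$ and $n^2/(n^2-1)$; the identity $(n-1)\cdot n^2/(n^2-1)=n^2/(n+1)$ means the $|Df|$ factor already takes the desired form. For the oscillation factor, write $u=\log|f-y_0|$ and note $|\nabla u|\le |Df|/|f-y_0|$. Sobolev--Poincaré with $p=n^2/(n+1)$, whose Sobolev conjugate is exactly $p^*=n^2$, then bounds the $L^{n^2}$ oscillation of $u$ by $\bigl(\int_{2Q}(|Df|/|f-y_0|)^{n^2/(n+1)}\bigr)^{(n+1)/n^2}$. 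The two exponents $(n+1)/n^2$ and $(n^2-1)/n^2$ sum to $(n+1)/n$, producing the right-hand side of the claim up to a factor of $|Q|^{-1/n}$.

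The final subtlety is that $J_f/|f-y_0|^n$ need not be non-negative, so one has $\int_Q \le \int\eta^n+\int_{2Q\setminus Q}(J_f/|f-y_0|^n)_-$; the quasiregular value inequality, rearranged as $KJ_f\ge -|f-y_0|^n\Sigma$, gives the pointwise bound $(J_f/|f-y_0|^n)_-\le \Sigma/K$, which absorbs into the $\dashint_{2Q}\Sigma$ term. Converting to averages and cancelling powers of $|Q|$ yields the stated inequality. The principal obstacle is justifying the integration by parts distributionally: this is precisely what requires Lemma~\ref{lem:qrval_higher_int}, as without higher integrability of $Df$ the pullback $f^*\sigma$ would not make sense as a closed current of degree $n-1$.
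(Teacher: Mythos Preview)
Your proof is correct and is, at its core, the same argument the paper uses. The paper encodes the logarithmic change of variables by introducing the ``spherical logarithm'' $G(x)=(\log|f(x)-y_0|,\,(f(x)-y_0)/|f(x)-y_0|)$ into $\R\times\S^{n-1}$, so that $|DG|=|Df|/|f-y_0|$ and $J_G=J_f/|f-y_0|^n$, and then cites a Caccioppoli-type inequality for $\R\times\S^{n-1}$-valued Sobolev maps from earlier work. Your differential-form identity $|y-y_0|^{-n}\,dy=d(\log|y-y_0|\cdot\sigma)$ followed by integration by parts, H\"older with exponents $n^2$ and $n^2/(n^2-1)$, and Sobolev--Poincar\'e with $p=n^2/(n+1)$ is precisely how that Caccioppoli inequality is proven once one unwinds it; you have simply carried out the estimate directly on $f$ rather than on $G$. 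The advantage of your presentation is that it is self-contained; the advantage of the paper's is that the statement becomes a clean reverse H\"older inequality for $|DG|$, making the subsequent appeal to Gehring's lemma transparent.

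One small correction: you overstate the role of Lemma~\ref{lem:qrval_higher_int}. Since $y_0\notin f(\Omega)$ and $f$ is continuous, the form $\alpha=\log|y-y_0|\cdot\sigma$ and its derivative $d\alpha=|y-y_0|^{-n}\,dy$ are smooth with locally bounded coefficients on a neighborhood of $f(\overline{2Q})$. The identity $d(f^*\alpha)=f^*(d\alpha)$ in the distributional sense then holds already for $f\in W^{1,n}_\loc$ by the standard pullback-commutes-with-$d$ result for critical Sobolev maps; no higher integrability is needed for the integration by parts. The proof goes through without invoking Lemma~\ref{lem:qrval_higher_int}.
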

	\begin{proof}[Idea of proof]
		The claim essentially follows from the proof of \cite[Lemma~4.5]{Kangasniemi-Onninen_Picard} with minimal changes.
		
		Indeed, since $y_0 \notin f\Omega$, we can define a continuous function $G \colon U \to \R \times \S^{n-1}$ by
		\[
			G(x) = \left( \log \abs{f(x) - y_0}, \frac{f(x) - y_0}{\abs{f(x) - y_0}} \right);
		\]
		for details on this ``spherical logarithm'' of $f - y_0$, see \cite[Section 7]{Kangasniemi-Onninen_Heterogeneous} along with the correction \cite{Kangasniemi-Onninen_Heterogeneous-corrigendum}, as well as \cite[Section 4]{Kangasniemi-Onninen_Picard}. The key property of the map $G$ is that it lies in $W^{1,n}_\loc(\Omega, \R \times \S^{n-1})$ with
		\[
			\abs{DG} = \frac{\abs{Df}}{\abs{f - y_0}} \qquad \text{and} \qquad J_G = \frac{J_f}{\abs{f - y_0}^n},
		\]
		and therefore the fact that $f$ has a $(K, \Sigma)$-quasiregular value at $y_0$ implies that
		\begin{equation}\label{eq:G_heterogeneous_estimate}
			\abs{DG}^n \leq K J_G + \Sigma
		\end{equation}
		a.e.\ in $U$. 
		
		The claim is hence a reverse H\"older inequality for $\abs{DG}$, and can be proven by using \eqref{eq:G_heterogeneous_estimate} and a Caccioppoli-type inequality for functions into $\R \times \S^{n-1}$ shown e.g.\ in \cite[Lemma 2.3]{Kangasniemi-Onninen_Heterogeneous}. The proof is given in \cite[Lemma~4.5]{Kangasniemi-Onninen_Picard} in the case $G \in W^{1,n}_\loc(\R^n, \R \times \S^{n-1})$. However, the same proof also works on a smaller domain $\Omega$ as long as we assume $2Q \subset \Omega$.
	\end{proof}
	
	Thus, we obtain the following corollary by combining Lemma \ref{lem:reverse_Holder} with the version of Gehring's lemma recalled in Proposition \ref{prop:local_Gehring_lemma}.
	
	\begin{cor}\label{cor:higher_int}
		Let $\Omega \subset \R^n$ be open, let $y_0 \in \R^n$, and let $f \in W^{1,n}_\loc(\Omega, \R^n)$ be a non-constant continuous map with $y_0 \notin f \Omega$. Suppose that $f$ has a $(K, \Sigma)$-quasiregular value at $y_0$, where $K \geq 1$ and $\Sigma \in L^{1+\eps}(\Omega)$ with $\eps > 0$. Suppose also that $\abs{Df}/\abs{f-y_0} \in L^n(\Omega)$. Then there exists $p = p(n, K) \in (n, n(1+\eps))$ such that for every cube $Q$ with $\sqrt{2} Q \subset \Omega$, we have
		\[
			\int_{Q} \frac{\abs{Df}^p}{\abs{f-f(x_0)}^p}  \leq C(n, p) \left( \abs{Q}^{-\frac{p-n}{n}} \left( \int_{\sqrt{2} Q} \frac{\abs{Df}^n}{\abs{f-f(x_0)}^n} \right)^\frac{p}{n} + \int_{\sqrt{2} Q} \Sigma^{\frac{p}{n}}  \right).
		\]
	\end{cor}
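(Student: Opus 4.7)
The plan is to combine Lemma \ref{lem:reverse_Holder} with the local version of Gehring's lemma (Proposition \ref{prop:local_Gehring_lemma}) by a direct change of exponents; the proof is essentially a bookkeeping exercise since the real work has already been done in Lemma \ref{lem:reverse_Holder}. Set
\[
    \tilde{p} := \frac{n+1}{n}, \qquad g := \left(\frac{\abs{Df}}{\abs{f-y_0}}\right)^{\!n^2/(n+1)}, \qquad h := \Sigma^{n/(n+1)},
\]
so that $g^{\tilde{p}} = \abs{Df}^n/\abs{f-y_0}^n$ and $h^{\tilde{p}} = \Sigma$. Raising the inequality in Lemma \ref{lem:reverse_Holder} to the power $1/\tilde{p} = n/(n+1)$ (and using $(a+b)^{1/\tilde{p}} \leq a^{1/\tilde{p}} + b^{1/\tilde{p}}$) then yields
\[
    \left(\dashint_{Q'} g^{\tilde{p}}\right)^{1/\tilde{p}} \leq C(n) K^{1/\tilde{p}} \dashint_{2Q'} g + \left(\dashint_{2Q'} h^{\tilde{p}}\right)^{1/\tilde{p}}
\]
for every cube $Q'$ with $2Q' \subset \Omega$. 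The hypotheses $\abs{Df}/\abs{f-y_0} \in L^n(\Omega)$ and $\Sigma \in L^{1+\eps}(\Omega)$ guarantee that $g, h$ lie in the right $L^{\tilde{p}}$ spaces.

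Next I would apply Proposition \ref{prop:local_Gehring_lemma} on the outer cube $Q_0 := \sqrt{2}\, Q$ (requiring $\sqrt{2}\,Q \subset \Omega$, as assumed) with the choice $\sigma = 1/\sqrt{2}$, so that $\sigma Q_0 = Q$. The lemma produces a threshold $q_0 = q_0(n, K) > \tilde{p}$ such that for every $q \in (\tilde{p}, q_0)$,
\[
    \left(\dashint_{Q} g^q\right)^{1/q} \leq C(n, q) \left(\left(\dashint_{\sqrt{2}\,Q} g^{\tilde{p}}\right)^{1/\tilde{p}} + \left(\dashint_{\sqrt{2}\,Q} h^q\right)^{1/q}\right).
\]
Now I invert the substitution by setting $p := n^2 q/(n+1)$, which gives $g^q = (\abs{Df}/\abs{f-y_0})^{p}$ and $h^q = \Sigma^{p/n}$. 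The crucial identity $q/\tilde{p} = p/n$ makes all exponents line up after raising to the $q$-th power.

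To ensure $p \in (n, n(1+\eps))$, I choose $q$ close enough to $\tilde{p}$ that $q < \min(q_0, (n+1)(1+\eps)/n)$; this is possible since $q \downarrow \tilde{p}$ corresponds to $p \downarrow n$. Raising to the $q$-th power and applying $(a+b)^q \lesssim_q a^q + b^q$ then gives
\[
    \dashint_Q \left(\frac{\abs{Df}}{\abs{f-y_0}}\right)^{p} \leq C(n,p) \left(\left(\dashint_{\sqrt{2}\,Q} \frac{\abs{Df}^n}{\abs{f-y_0}^n}\right)^{p/n} + \dashint_{\sqrt{2}\,Q} \Sigma^{p/n}\right).
\]
Finally, converting averages to integrals via $\abs{\sqrt{2}\,Q} = 2^{n/2}\abs{Q}$ and multiplying through by $\abs{Q}$, the relation $\abs{Q}^{1 - p/n} = \abs{Q}^{-(p-n)/n}$ absorbs into the constant $C(n,p)$ to deliver exactly the stated inequality.

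The only subtlety worth highlighting is the appearance of $\sqrt{2}\,Q$ rather than $2Q$ on the right-hand side; this is engineered by taking $\sigma = 1/\sqrt{2}$ so that, in Gehring's framework, the reverse Hölder hypothesis needs only to hold on cubes $Q''$ with $2Q'' \subset \sqrt{2}\,Q$, which is automatic once $\sqrt{2}\,Q \subset \Omega$. No analytic difficulty arises — everything reduces to tracking exponents through the substitution.
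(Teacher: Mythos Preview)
Your proposal is correct and follows exactly the approach the paper indicates: the paper simply states that the corollary follows by combining Lemma~\ref{lem:reverse_Holder} with Proposition~\ref{prop:local_Gehring_lemma}, and your argument carries out precisely this combination with the appropriate substitutions $g = (\abs{Df}/\abs{f-y_0})^{n^2/(n+1)}$, $h = \Sigma^{n/(n+1)}$, $\sigma = 1/\sqrt{2}$. The exponent bookkeeping and the conversion from averages to integrals are all handled correctly.
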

	
	\section{Bounded components of normal neighborhoods}\label{sect:bounded_components}
	
	If $\Omega \subset \R^n$ is open and $f \colon \Omega \to \R^n$ is a continuous and open map, then it is possible to show that for small enough $\rho$, the set $\R^n \setminus U_f(x_0, \rho)$ is a connected unbounded set. However, if $f \colon \Omega \to \R^n$ is merely a continuous map, then it is entirely possible that $\R^n \setminus U_f(x_0, \rho)$ contains multiple components, some of them unbounded.
	
	Our objective in this section is to prove the following key lemma that, in the presence of a $(K, \Sigma)$-quasiregular value at $f(x_0)$, grants us control over how far $f$ can extend on a bounded component of $\R^n \setminus U_f(x_0, \rho)$. 
	
	\begin{lemma}\label{lem:the_important_distortion_lemma}
			Let $\Omega \subset \R^n$ be a domain, let $x_0 \in \Omega$, let $f \in W^{1,n}_\loc(\Omega, \R^n)$ be a non-constant continuous map, and let $\alpha > 1$. Suppose that $f$ has a $(K, \Sigma)$-quasiregular value at $f(x_0)$, where $K \geq 1$ and $\Sigma \in L^{1+\eps}_\loc(\Omega)$ with $\eps > 0$. Then there exists a $\rho_0 = \rho_0(\Omega, f, x_0, \alpha, K, \Sigma)$ such that, if $\rho \in (0, \rho_0)$ and $V$ is a bounded component of $\R^n \setminus U_f(x_0, \rho)$, we have
			\[
				\sup_{x \in V} \abs{f(x) - f(x_0)} \leq \alpha^2 \rho.
			\]
	\end{lemma}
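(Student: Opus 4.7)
The plan is by contradiction: suppose that for some arbitrarily small $\rho$ there is a bounded component $V$ of $\R^n \setminus U_f(x_0,\rho)$ and a point $x_1 \in V$ with $|f(x_1) - f(x_0)| > \alpha^2\rho$. Lemma~\ref{lem:Ufs_are_strong_normal} ensures that $U_f(x_0,\rho)$ is a strong normal neighborhood for small $\rho$ and that $\diam U_f(x_0,\rho) \to 0$. Since $\R^n \setminus \B^n(x_0, r)$ is connected and unbounded whenever $U_f(x_0,\rho) \subset \B^n(x_0, r)$, every bounded component $V$ lies inside a ball $\B^n(x_0, r(\rho))$ with $r(\rho)\to 0$, so $\int_V \Sigma \to 0$. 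Moreover Lemma~\ref{lem:normal_nbhd_props} gives $f \neq f(x_0)$ on $\overline V$, and $|f-f(x_0)| \equiv \rho$ on $\partial V$ by Lemma~\ref{lem:boundary_into_boundary}.

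The first step is to pass to the pseudo-logarithm $G = (\log|f-f(x_0)|,\,(f-f(x_0))/|f-f(x_0)|) \in W^{1,n}_\loc(V, \R\times \S^{n-1})$, which, as in the proof of Lemma~\ref{lem:reverse_Holder}, satisfies $|DG| = |Df|/|f-f(x_0)|$, $J_G = J_f/|f-f(x_0)|^n$, and the sharper inequality $|DG|^n \leq K J_G + \Sigma$ a.e. Degree theory in the form of Lemma~\ref{lem:level_set_lemma}\ref{enum:level_set_integral_over_V} yields $\int_V J_G = 0$, so integrating the pointwise inequality gives $\int_V |DG|^n \leq \int_V \Sigma$, a quantity that vanishes as $\rho\to 0$. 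I next set up a capacity condenser: let $W$ be the connected component of $\{x\in V : |f(x)-f(x_0)|>\alpha\rho\}$ containing $x_1$, so that $\overline W\subset V$ and $|f-f(x_0)| \equiv \alpha\rho$ on $\partial W$, and let $C$ be the connected component through $x_1$ of the compact set $\{x\in \overline W : |f(x)-f(x_0)|\geq \alpha^{3/2}\rho\}$. Applying Lemma~\ref{lem:basic_adm_function_estimate} with $a = \alpha\rho$, $b = \alpha^{3/2}\rho$ to the map $f - f(x_0)$ on $\overline W \setminus \intr C$ yields
\[
\capac_n(C, W) \leq \frac{1}{\log^n\!\sqrt\alpha}\int_W|DG|^n \leq \frac{1}{\log^n\!\sqrt\alpha}\int_V\Sigma,
\]
which tends to $0$.

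The heart of the argument, and the main obstacle, is a matching non-vanishing lower bound on $\capac_n(C, W)$. Letting $E$ denote the unbounded component of $\R^n\setminus W$, monotonicity of capacity gives $\capac_n(C, W)\geq \capac_n(C, \R^n\setminus E)$, and the ring-condenser form of the Loewner inequality (Lemma~\ref{lem:ring_condenser_capacity}) produces a positive lower bound $c(n)>0$ as soon as both $C$ and $E$ meet some common sphere $\partial\B^n(x_1, R)$. Since $\R^n \setminus \B^n(x_0, r(\rho)) \subset E$ we immediately obtain $\dist(x_1, E) \leq r(\rho)$; the delicate part is to force $C$ to reach at least as far from $x_1$. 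For this I plan to invoke the uniform H\"older estimate of Lemma~\ref{lem:qrval_local_holder}, sharpened via the Gehring-type higher integrability of $|DG|$ from Corollary~\ref{cor:higher_int}, to show that $C$ contains a ball about $x_1$ of radius comparable to $r(\rho)$: the key input is that the margin $|f(x_1)-f(x_0)| - \alpha^{3/2}\rho$ is of the same order as $\rho$, and combining the H\"older control of $f$ with the Gehring-improved control of $G$ lets one propagate this margin across scales. This yields the Loewner hypothesis, produces the contradiction against the vanishing upper bound, and determines $\rho_0$ in terms of the data $\Omega, f, x_0, \alpha, K, \Sigma$.
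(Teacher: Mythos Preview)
Your setup through the upper capacity bound is correct and matches the paper: the pseudo-logarithm $G$, the vanishing integral $\int_V J_G = 0$ from Lemma~\ref{lem:level_set_lemma}\ref{enum:level_set_integral_over_V}, and hence $\int_V |DG|^n \le \int_V \Sigma \to 0$, together with Lemma~\ref{lem:basic_adm_function_estimate}, are exactly the right inputs. The gap is in what you call ``the heart of the argument.''

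For the Loewner lower bound on $\capac_n(C,W)$ via Lemma~\ref{lem:ring_condenser_capacity}, you need $C$ to reach a sphere $\partial\B^n(x_1,R)$ that $E$ also meets. You know $\dist(x_1,E)\le 2r(\rho)$, so you need $\diam C \gtrsim \dist(x_1,E)$. H\"older continuity of $f$ only yields that $C$ contains a ball of radius $\sim\rho^{1/\beta}$, and there is \emph{no a priori comparison} between this target-side quantity and the domain-side distance $\dist(x_1,E)$; relating the two scales is precisely the linear-distortion statement you are trying to build toward. Your appeal to Corollary~\ref{cor:higher_int} does not help here: it gives an $L^p$ upper bound on $|DG|$, not a geometric lower bound on the level set $C$. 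As written, the Loewner hypothesis cannot be verified, and the contradiction does not close.

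The paper supplies the two ideas you are missing. First, a \emph{Roundness Lemma} (Lemma~\ref{lem:roundness_lemma}): using iterated $n$-capacity Loewner estimates between three nested components $V_2\subset V_1\subset V_0$ at scales $\alpha\rho,\sqrt\alpha\,\rho,\rho$ (each pair controlled by $\int_V\Sigma$ exactly as in your upper bound), one forces $V'\subset\overline{\B^n}(x_1,r_1)$ with $\B^n(x_1,2r_1)\subset V$. This is purely domain-side geometry and replaces the step you could not do. Second, the paper then \emph{switches from $n$-capacity to $p$-capacity with $p>n$}: it fits $V'$ into a cube $Q$ with $\sqrt2\,Q\subset V$, uses the point-capacity lower bound $\capac_p(\{x_1\},Q)\ge C(n,p)\,|Q|^{-(p-n)/n}$ of Lemma~\ref{lem:point_capacity}, and matches its scaling against the Gehring-improved estimate of Corollary~\ref{cor:higher_int}. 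The use of $p>n$ is essential, since the $n$-capacity of a point vanishes; this is how the paper avoids needing any lower bound on the size of a continuum inside $V'$.
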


	The proof is somewhat involved, and a significant portion of our more specific preliminary results, such as Lemmas \ref{lem:basic_adm_function_estimate} and \ref{lem:level_set_lemma} along with Corollary \ref{cor:higher_int}, have been formulated specifically for the proof of this result. The proof proceeds in two steps. In the first step, we show that if $\alpha > 1$ and $V$ is a bounded component of $\R^n \setminus U_f(x_0, \rho)$ with $\rho$ small enough, then every bounded component $V'$ of $\R^n \setminus U_f(x_0, \alpha \rho)$ that is contained in $V$ is in fact contained in a sufficiently small ball within $V$. In the second step, we then further limit how much further $f$ can escape on $V'$ with an argument based on logarithmic higher integrability and the fact that the $p$-capacity of a point is positive when $p > n$.
	
	\subsection{Step 1: Roundness}
	
	We collect the statement we wish to prove in the first step in the following lemma; see also the illustration in Figure \ref{fig:roundness}.
	
	\begin{lemma}\label{lem:roundness_lemma}
		Let $\Omega \subset \R^n$ be a domain, let $x_0 \in \Omega$, let $f \in W^{1,n}_\loc(\Omega, \R^n)$ be a non-constant continuous map, and let $\alpha > 1$. Suppose that $f$ has a $(K, \Sigma)$-quasiregular value at $f(x_0)$, where $K \geq 1$ and $\Sigma \in L^{1+\eps}_\loc(\Omega)$ with $\eps > 0$. Then there exists $\rho_0 = \rho_0(\Omega, f, x_0, \alpha, \Sigma)$ as follows: if $\rho \in (0, \rho_0)$, $V$ is a bounded component of $\R^n \setminus U_f(x_0, \rho)$, and $V' \subset V$ is a non-empty bounded component of $\R^n \setminus U_f(x_0, \alpha \rho)$, then there exist a point $x_1 \in V$ and a radius $r_1 > 0$ such that
		\[
			V' \subset \overline{\B^n}(x_1, r_1) \qquad \text{and} \qquad \B^n(x_1, 2r_1) \subset V.
		\]
	\end{lemma}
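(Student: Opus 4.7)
The plan is to control the geometry of $V'$ inside $V$ via an $n$-capacity estimate for the condenser $(V', \intr V)$, bounding this capacity above using the quasiregular value inequality and below using the Loewner property of $\R^n$.

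First, some preliminary reductions. By Theorem \ref{thm:1p_Reshetnyak} together with Lemma \ref{lem:Ufs_are_strong_normal}, for $\rho$ small each $U_f(x_0, \rho)$ is a strong normal neighborhood of $x_0$ and $\diam U_f(x_0, \rho) \to 0$. Any bounded component $V$ of $\R^n \setminus U_f(x_0, \rho)$ must lie in the convex hull of $\overline{U_f(x_0, \rho)}$, because a bounded component cannot contain a point strictly outside this convex hull (a separating hyperplane would otherwise produce a half-space disjoint from $U_f(x_0, \rho)$ and contained in the component, forcing it to be unbounded). Hence $\diam V \to 0$ uniformly in $V$ and $\int_V \Sigma \to 0$ by absolute continuity of the integral. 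Next, I verify that $V' \subset \intr V$ by a short topological argument: if $x \in V' \cap \partial V$, then $x \in \partial U_f(x_0, \rho)$ gives $\abs{f(x) - f(x_0)} = \rho < \alpha\rho$, placing $x$ in an open component of $f^{-1}(\B^n(f(x_0), \alpha\rho))$ that is disjoint from $U_f(x_0, \alpha\rho)$ and therefore contained in $V'$; this forces $x \in \intr V' \subset \intr V$, contradicting $x \in \partial V$.

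For the capacity upper bound, Lemma \ref{lem:normal_nbhd_props} gives $f - f(x_0) \neq 0$ on $V$, so I apply Lemma \ref{lem:basic_adm_function_estimate} to the condenser $(V', \intr V)$ with $p = n$, $a = \rho$, $b = \alpha\rho$. The boundary conditions are verified via $\partial(\intr V) \subset \partial V \subset \partial U_f(x_0, \rho)$ and $\partial V' \subset \partial U_f(x_0, \alpha\rho)$, yielding
\[
\capac_n(V', \intr V) \leq \frac{1}{\log^n \alpha} \int_V \frac{\abs{Df}^n}{\abs{f - f(x_0)}^n}.
\]
The quasiregular value inequality $\abs{Df}^n \leq K J_f + \abs{f - f(x_0)}^n \Sigma$ combined with the vanishing identity $\int_V J_f / \abs{f - f(x_0)}^n = 0$ from Lemma \ref{lem:level_set_lemma}(i) then gives
\[
\capac_n(V', \intr V) \leq (\log \alpha)^{-n} \int_V \Sigma,
\]
which tends to $0$ uniformly in the bounded components $V$ as $\rho \to 0$.

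For the lower bound the crucial topological fact is that $\R^n \setminus \intr V = \overline{\R^n \setminus V}$ is connected: $\R^n \setminus V$ decomposes as the union of the connected open set $U_f(x_0, \rho)$ with every other component $W'$ of $\R^n \setminus U_f(x_0, \rho)$, and each such $W'$ shares the nonempty set $\partial W'$ with $\overline{U_f(x_0, \rho)}$, so each $U_f(x_0, \rho) \cup W'$ is connected and the whole union is connected. Assuming $\diam V' > 0$ (the case $V' = \{p\}$ is immediate by choosing $r_1 = \dist(p, \R^n \setminus V)/3 > 0$), Lemma \ref{lem:Loewner} gives $\capac_n(V', \intr V) \geq \phi_n(\dist(V', \R^n \setminus \intr V)/\diam V')$. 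Combining with the upper bound and using that $\phi_n$ is decreasing with $\phi_n(t) \to 0$ as $t \to \infty$, I obtain $\dist(V', \R^n \setminus \intr V)/\diam V' \to \infty$ uniformly as $\rho \to 0$. Choosing $\rho_0$ so that this ratio exceeds $2$, and noting $\dist(V', \R^n \setminus V) \geq \dist(V', \R^n \setminus \intr V)$, any $x_1 \in V'$ together with $r_1 = \diam V'$ gives the desired conclusion. The main technical obstacle is the topological analysis showing $\R^n \setminus \intr V$ is connected, since for a merely continuous $f$ the bounded component $V$ can a priori have a complicated structure; a careful check is also needed that the boundary conditions for Lemma \ref{lem:basic_adm_function_estimate} hold for $(V', \intr V)$ when $V$ is only a closed (not open) set.
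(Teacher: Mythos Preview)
Your proof is correct and in fact more direct than the paper's. Both arguments rest on the same two pillars: the capacity upper bound obtained from Lemma~\ref{lem:basic_adm_function_estimate} combined with the quasiregular value inequality and Lemma~\ref{lem:level_set_lemma}~\ref{enum:level_set_integral_over_V}, and a Loewner lower bound. The paper, however, inserts an intermediate scale $U_1 = U_f(x_0,\sqrt{\alpha}\rho)$ and a corresponding component $V_1$ with $V' = V_2 \subset V_1 \subset V_0 = V$, applies Loewner to each of the two condensers $(V_i,\intr V_{i-1})$ using the connected set $\overline{U_{i-1}}$ as the second continuum, and then derives a numerical contradiction from the chain $2D_2 \ge d_1+d_2$, $D_i \ge 2d_{i+1}$, $d_i \ge \tfrac{4}{5}D_i$. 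You instead work with the single condenser $(V',\intr V)$ and invest the extra effort in the topological observation that $\R^n \setminus \intr V = \overline{U_f(x_0,\rho)} \cup \bigcup_{W\neq V} W$ is connected; this lets Loewner apply directly and immediately yields $\dist(V',\R^n\setminus\intr V)/\diam V'$ large, with no intermediate level or arithmetic juggling. The trade-off is clean: the paper avoids proving connectivity of the complement of a potentially complicated $V$, while your route avoids the two-step nesting at the cost of that connectivity check. One small remark: you invoke ``$\phi_n(t)\to 0$ as $t\to\infty$'', which is true but neither stated in Lemma~\ref{lem:Loewner} nor needed---the conclusion that the ratio exceeds $2$ follows already from $\phi_n$ being decreasing and $\phi_n(2)>0$.
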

	
	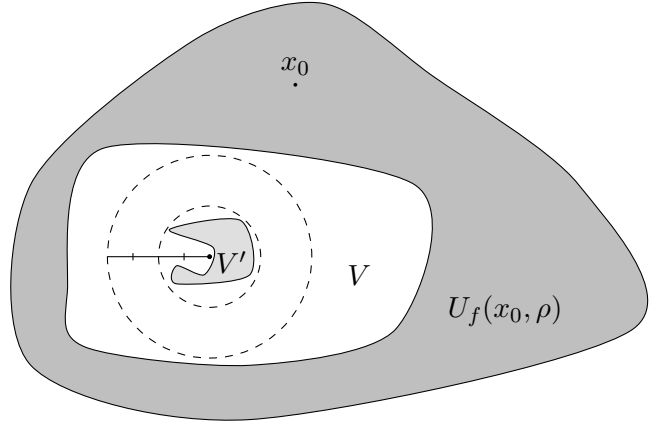
\begin{figure}[h]
		\begin{tikzpicture}[scale = 2.4]
			\draw [fill=gray!50] plot [smooth cycle,tension=0.5] coordinates 
			{(-0.2, 0) (1, -0.3) (3.1, 0.2) (2.8, 1) (2, 1.6)
				(1.4, 2) (0.7, 1.8) (-0.2, 1)};
			\draw [fill=white] plot [smooth cycle,tension=0.5] coordinates 
			{(0.1, 0.1) (1, 0) (1.8, 0.2) 
				(1.9, 1.0) (0.2, 1.2) (0, 0.4)};
			\draw [fill=lightgray!50] plot [smooth cycle,tension=0.5] coordinates 
			{(0.6, 0.45) (1, 0.5) (0.95, 0.8) (0.56, 0.75) 
				(0.8, 0.65) (0.75, 0.5) (0.6, 0.55)};
				
			\draw[dashed] (0.78, 0.6) circle (0.28);
			\draw[dashed] (0.78, 0.6) circle (0.56);
			\filldraw (0.78, 0.6) circle (0.3pt);
			\draw (0.78, 0.6) -- (0.78 - 0.56, 0.6);
			
			\draw (0.78 - 0.56*0.25, 0.6+0.02) -- 
				(0.78 - 0.56*0.25, 0.6-0.02);
			\draw (0.78 - 0.56*0.75, 0.6+0.02) -- 
				(0.78 - 0.56*0.75, 0.6-0.02);

			\fill (1.25,1.55) circle[radius=0.3pt] node[anchor=south] {$x_0$};
			\draw (2.4, 0.3) node[anchor=center] {$U_f(x_0, \rho)$};
			\draw (0.9, 0.57) node[anchor=center] {$V'$};
			\draw (1.6, 0.5) node[anchor=center] {$V$};
			
		\end{tikzpicture}
		\caption{Illustration of the result of Lemma \ref{lem:roundness_lemma}: for small enough $\rho$, the component $V'$ of $\R^n \setminus U_f(x_0, \alpha\rho)$ is contained inside the component $V$ of $\R^n \setminus U_f(x_0, \rho)$ in a relatively controlled manner.}\label{fig:roundness}
	\end{figure}
	
	\begin{proof}
		By the single-point Reshetnyak's theorem recalled in Theorem \ref{thm:1p_Reshetnyak}, we have that $f^{-1}\{f(x_0)\}$ is discrete and $i(x_0, f) > 0$. We choose $\rho_0$ to be small enough that if $\rho \in (0, \rho_0)$, then $U_f(x_0, \alpha\rho)$ is a strong normal neighborhood of $x_0$. We also require that $\B^n(x_0, \diam U_f(x_0, \alpha \rho_0)) \subset \Omega$ and
		\begin{equation}\label{eq:roundness_lemma_setup}
			\int_{\B^n(x_0, \diam U_f(x_0, \alpha \rho_0))} \Sigma \leq \frac{C(n) \log^n(\alpha) \log(5/4)}{2^n }, 
		\end{equation}
		where $C(n)$ is the constant from the version of the Loewner property given in Lemma \ref{lem:Loewner}. These choices are possible by Lemma \ref{lem:Ufs_are_strong_normal} along with the $L^1_\loc$-integrability of $\Sigma$.
		
		Suppose then that $\rho \in (0, \rho_0)$, and let $V$, $V'$ be bounded non-empty components of $\R^n \setminus U_f(x_0, \rho)$ and $\R^n \setminus U_f(x_0, \alpha \rho)$, respectively, with $V' \subset V$. We denote $U_0 = U_f(x_0, \rho)$, $V_0 = V$, $U_2 = U_f(x_0, \alpha \rho)$ and $V_2 = V'$. We also select an intermediary normal neighborhood $U_1 = U_f(x_0, \sqrt{\alpha} \rho)$, and let $V_1$ denote the component of $\R^n \setminus U_1$ for which $V_2 \subset V_1 \subset V_0$. We also denote $D_i = \diam(V_i)$ for $i = 0, 1, 2$, and $d_i = \dist(V_{i}, U_{i-1})$ for $i = 1, 2$.  See Figure \ref{fig:Ui_and_Vi} for an illustration. Note that we may assume that $D_2 > 0$, as otherwise $V' = V_2$ is a point $\{x_1\}$, and the claim follows by selecting any $r_1 > 0$ such that $\B^n(x_1, 2r_1) \subset \intr V$.
		
		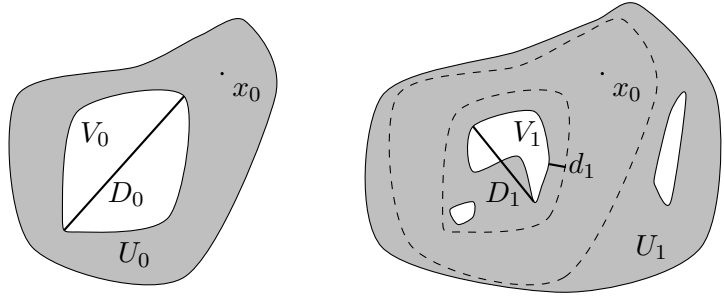
\begin{figure}[h]
			\begin{tikzpicture}[scale = 2]
				\begin{scope}
					\draw [fill=gray!50] plot [smooth cycle,tension=0.5] coordinates 
						{(0, 0) (1, -0.2) (1.6, 1) (1.4, 1.5)
						(1.1, 1.4) (0.7, 1.2) (-0.1, 1)};
					\draw [fill=white] plot [smooth cycle,tension=0.5] coordinates 
						{(0.3, 0.1) (0.9, 0.2) (1.0, 1.0) (0.3, 0.9) (0.2, 0.2)};
					\fill (1.25,1.15) circle[radius=0.3pt] node[anchor=north west] {$x_0$};
				
					\draw (0.5, -0.05) node[anchor=west] {$U_0$};
					\draw (0.25, 0.75) node[anchor=west] {$V_0$};
				
					\draw[thick] (0.21, 0.11) -- (1.0, 1.0);
					\draw (0.62, 0.5) node[anchor=north] {$D_0$};
				\end{scope}
				
				\begin{scope}[shift={(2.5,0)}]
					\draw [fill=gray!50] plot [smooth cycle,tension=0.5] coordinates 
					{(-0.2, 0) (1, -0.3) (1.9, 0) (2, 1) (1.5, 1.6)
						(1.2, 1.5) (0.7, 1.3) (-0.2, 1)};
					\draw [fill=white] plot [smooth cycle,tension=0.5] coordinates 
					{(1.7, 0.3) (1.8, 1) (1.7, 0.9) (1.6, 0.5) (1.6, 0.4)};
					\draw [fill=white] plot [smooth cycle,tension=0.5] coordinates 
					{(0.3, 0.15) (0.4, 0.2) (0.4, 0.3) (0.25, 0.25)};
					\draw [fill=white] plot [smooth cycle,tension=0.5] coordinates 
					{(0.4, 0.5) (0.7, 0.6) (0.8, 0.3) (0.85, 0.4) (0.9, 0.6)
						(0.8, 0.9) (0.4, 0.8)};
					
					\draw [dashed] plot [smooth cycle,tension=0.5] coordinates 
					{(0, 0) (1, -0.2) (1.6, 1) (1.4, 1.5)
						(1.1, 1.4) (0.7, 1.2) (-0.1, 1)};
					\draw [dashed] plot [smooth cycle,tension=0.5] coordinates 
					{(0.3, 0.1) (0.9, 0.2) (1.0, 1.0) (0.3, 0.9) (0.2, 0.2)};
					\fill (1.25,1.15) circle[radius=0.3pt] node[anchor=north west] {$x_0$};
					
					\draw (1.4, 0) node[anchor=west] {$U_1$};
					\draw (0.76, 0.9) node[anchor=north] {$V_1$};
					
					\draw[thick] (0.9, 0.55) -- (1.01, 0.53);
					\draw (0.96, 0.54) node[anchor=west] {$d_1$};
					\draw[thick] (0.4, 0.8) -- (0.8, 0.3);
					\draw (0.6, 0.5) node[anchor=north] {$D_1$};
				\end{scope}
			\end{tikzpicture}
			\caption{An example illustration of the sets $U_0$, $V_0$, $U_1$ and $V_1$, along with the quantities $D_0 = \diam(V_0)$, $D_1 = \diam(V_1)$, and $d_1 = \dist(V_1, U_0)$.}\label{fig:Ui_and_Vi}
		\end{figure}
		
		We first show that our claim holds if $2D_2 < d_1 + d_2$. Indeed, we have $\dist(V_2, U_0) \geq \dist(V_1, U_0) + \dist(V_2, U_1) = d_1 + d_2$, since any path from $U_0$ to $V_2$ has to first intersect the boundary of $V_1$ before reaching $V_2$. Thus, if we select any $x_1 \in V_2$, we get $V_2 \subset \overline{\B^n}(x_1, D_2)$ and $\B^n(x_1, 2D_2) \subset \B^n(V_2, 2D_2) \subset \B^n(V_2, d_1 + d_2) \subset \B^n(V_2, \dist(V_2, U_0)) \subset V_0$, proving that the claim is true with $r_1 = D_2$.
		
		Thus, we assume that
		\begin{equation}\label{eq:chain_part_1}
			2D_2 \geq d_1 + d_2,
		\end{equation}
		with the objective of deriving a contradiction. In addition to this estimate, we observe that 
		\begin{equation}\label{eq:chain_part_2}
			D_{i} \geq 2 d_{i+1}
		\end{equation}
		for all $i \in \{0, 1\}$. Indeed, if $x \in V_{i+1}$, then $\B^n(x, d_{i+1}) \subset V_{i}$, and $\B^n(x, d_{i+1})$ has diameter $2d_{i+1}$.
		
		We then observe that for $i \in \{1, 2\}$, the sets $V_i$ and $\overline{U_{i-1}}$ are connected, and their distance is $d_i$. Moreover, since $V_i$ is contained in a bounded component $V_{i-1}$ of $\R^n \setminus U_{i-1}$, we must have $D_i = \diam(V_i) < \diam \overline{U_{i-1}}$. Thus, we may apply the Loewner property from Lemma \ref{lem:Loewner} as well as the monotonicity of capacity stated in \eqref{eq:capac_monotonicity} to conclude that
		\[
			C(n) \log \frac{D_i}{d_i} \leq  \capac(V_i, \intr \R^n \setminus \overline{U_i}) \leq \capac(V_i, \intr V_{i-1}).
		\] 
		We then note that $f - f(x_0) \neq 0$ on $V_{i-1}$ by Lemma \ref{lem:normal_nbhd_props} Moreover, we also have $\abs{f - f(x_0)} = \alpha^{(i-1)/2}\rho$ on $\partial V_{i-1}$ and $\abs{f - f(x_0)} = \alpha^{i/2}\rho$ on $\partial V_{i}$. Thus, we may apply the case $p = n$ of Lemma \ref{lem:basic_adm_function_estimate} with the function $f - f(x_0)$ on the condenser $(V_i, \intr V_{i-1})$. By combining this with Lemma \ref{lem:level_set_lemma} \ref{enum:level_set_integral_over_V} and the definition \eqref{eq:qrval_def} of quasiregular values, we obtain that
		\begin{multline*}
			\capac(V_i, \intr V_{i-1}) \leq \frac{2^n}{\log^n \alpha}\int_{V_{i-1} \setminus V_i} \frac{\abs{Df}^n}{\abs{f-f(x_0)}^n}\\
			\leq \frac{2^n}{\log^n \alpha} \int_{V_{i-1} \setminus V_i} \left( \frac{KJ_f}{\abs{f-f(x_0)}^n} + \Sigma \right)
			=  \frac{2^n}{\log^n \alpha} \int_{V_{i-1} \setminus V_i} \Sigma\\ \leq  \frac{2^n}{\log^n \alpha} \int_{\B^n(x_0, \diam U_f(x_0, \alpha \rho))} \Sigma.
		\end{multline*}
		Our choice made in \eqref{eq:roundness_lemma_setup} now comes into play: chaining it with the previous inequalities yields $\log(D_i/d_i) \leq \log(5/4)$, and consequently
		\begin{equation}\label{eq:chain_part_3}
			d_i \geq \frac{4D_i }{5}
		\end{equation}
		for $i \in \{1, 2\}$. Notably, \eqref{eq:chain_part_3} can be combined with \eqref{eq:chain_part_2}, obtaining
		\begin{equation}\label{eq:chain_combination_1}
			d_1 \geq \frac{8}{5} d_{2}.
		\end{equation}
		
		Finally, we combine our estimates by first applying \eqref{eq:chain_part_1}, then \eqref{eq:chain_combination_1}, and finally \eqref{eq:chain_part_3}. We obtain that
		\[
			2D_2 \geq d_1 + d_2 \geq \left( 1 + \frac{8}{5}\right) d_2
			\geq  \left( 1 + \frac{8}{5}\right) \frac{4}{5} D_2 = \frac{52}{25} D_2 > 2D_2.
		\]
		This is a contradiction as $D_2 > 0$, thus completing the proof.
	\end{proof}
	
	\subsection{Step 2: Logarithmic higher regularity.}
	
	In the second step, we take advantage of Lemma \ref{lem:roundness_lemma} by fitting a component $V'$ of $\R^n \setminus U_f(x_0, \alpha \rho)$ into a cube $\overline{Q}$ with $\sqrt{2}Q \subset V$, and then apply the logarithmic higher integrability result from Corollary \ref{cor:higher_int} to prove Lemma \ref{lem:the_important_distortion_lemma}.
	
	\begin{proof}[Proof of Lemma \ref{lem:the_important_distortion_lemma}]
		For convenience, we denote $D(\rho) = \diam(U_f(x_0, \rho))$ for $\rho > 0$. We start by assuming that $\rho_0$ is small enough that $U_f(x_0, \rho)$ is a strong normal neighborhood of $x_0$ for all $\rho < \rho_0$, and that $\B^n(x_0, D(\alpha^2 \rho_0)) \subset \Omega$, which is guaranteed for small enough $\rho_0$ by Lemma \ref{lem:Ufs_are_strong_normal}. Furthermore, we assume that $\rho_0$ is small enough that
		\begin{equation}\label{eq:dist_lemma_Sigma_assumption}
			\int_{\B^n(x_0, D(\rho_0))} \Sigma  \leq \left( \frac{C_2(n, p)}{2C_1(n, p)} \right)^\frac{n}{p} \log^p \alpha,
		\end{equation}
		where the exponent $p > n$ is as in Corollary \ref{cor:higher_int}, $C_1(n, p)$ is the constant of Corollary \ref{cor:higher_int}, and $C_2(n, p)$ is the constant of Lemma \ref{lem:point_capacity}. As our final condition for $\rho_0$, we assume that it is small enough that 
		\begin{equation}\label{eq:dist_lemma_Sigma_assumption_2}
			\abs{\B^n(x_0, D(\rho_0))}^\frac{p-n}{n} \int_{\B^n(x_0, D(\rho_0))} \Sigma^\frac{p}{n} < \frac{C_2(n, p) \log^p \alpha}{2C_1(n, p)}.
		\end{equation}
		Note that both \eqref{eq:dist_lemma_Sigma_assumption} and \eqref{eq:dist_lemma_Sigma_assumption_2} are possible to assume since the terms on the left hand side tend to $0$ as $\rho_0 \to 0$.
		
		Let $\rho < \rho_0$, and let $V$ be a bounded component of $\R^n \setminus U_f(x_0, \rho)$. We prove that $\abs{f - f(x_0)} \leq \alpha^2 \rho$ on $V$. Suppose towards contradiction that this is not the case, and we can find a $x_1 \in V$ such that $\abs{f(x_1) - f(x_0)} > \alpha^2 \rho$. Then $x_1$ is contained in some bounded component $V'$ of $\R^{n} \setminus U_f(x_0, \alpha \rho)$. By Lemma \ref{lem:roundness_lemma}, we find a ball $B$ such that $V' \subset \overline{B}$ and $2B \subset V$. Thus, we can fix a cube $Q$ such that $V' \subset \overline{Q}$, and $\sqrt{2}Q \subset \intr V$.
		
		By Lemma \ref{lem:point_capacity}, we have
		\begin{equation}\label{eq:point_capacity_chain_1}
			\abs{Q}^{-\frac{p-n}{n}} \leq \frac{\capac_p(\{x_1\}, Q)}{C_2(n, p)}
		\end{equation}
		On the other hand, Lemma \ref{lem:normal_nbhd_props} yields that $f \neq f(x_0)$ on $V$, and we also have $\abs{f - f(x_0)} = \alpha\rho$ on $\partial V'$ and $\abs{f(x_1) - f(x_0)} > \alpha^2\rho$. Thus, using \eqref{eq:capac_monotonicity} and Lemma \ref{lem:basic_adm_function_estimate}, we have
		\begin{equation}\label{eq:point_capacity_chain_2}
			\capac_p(\{x_1\}, Q) \leq \capac_p(\{x_1\}, \intr V') \leq \frac{1}{\log^p \alpha} \int_Q \frac{\abs{Df}^p}{\abs{f-f(x_0)}^p}
		\end{equation}
		We may then apply Corollary \ref{cor:higher_int} on $Q$ with $\intr V$ as the domain of definition. It follows that
		\begin{multline}\label{eq:point_capacity_chain_3}
			\int_Q  \frac{\abs{Df}^p}{\abs{f - f(x_0)}^p}\\
			\leq C_1(n, p) \abs{Q}^{-\frac{p-n}{n}} \left( \int_{\sqrt{2} Q} \frac{\abs{Df}^n}{\abs{f - f(x_0)}^n} \right)^\frac{p}{n} 
			+ C_1(n, p) \int_{\sqrt{2} Q} \Sigma^{\frac{p}{n}}.
		\end{multline}
		Then, applying the fact that $\sqrt{2} Q \subset V$, the definition \eqref{eq:qrval_def} of quasiregular values, Lemma \ref{lem:level_set_lemma} \ref{enum:level_set_integral_over_V}, and finally our assumption \ref{eq:dist_lemma_Sigma_assumption}, we get
		\begin{multline}\label{eq:point_capacity_chain_4}
			\int_{\sqrt{2} Q} \frac{\abs{Df}^n}{\abs{f - f(x_0)}^n} \leq \int_{V} \frac{\abs{Df}^n}{\abs{f - f(x_0)}^n} \leq \int_{V} \left( \frac{KJ_f}{\abs{f - f(x_0)}^n} + \Sigma \right)\\
			= \int_{V} \Sigma \leq \int_{\B^n(x_0, D(\rho_0))} \Sigma 
			\leq \left( \frac{C_2(n, p)}{2C_1(n, p)} \right)^\frac{n}{p} \log^p \alpha
		\end{multline}
	
		By chaining all of \eqref{eq:point_capacity_chain_1}-\eqref{eq:point_capacity_chain_4} together, we get
		\[
			\abs{Q}^{-\frac{p-n}{n}} \leq \frac{\abs{Q}^{-\frac{p-n}{n}}}{2} + \frac{C_1(n, p)}{C_2(n, p) \log^p \alpha} \int_{\sqrt{2} Q} \Sigma^{\frac{p}{n}}.
		\]
		But now, since $\sqrt{2}Q \subset V \subset \B^n(x_0, D(\rho_0))$, we can estimate that
		\[
			\frac{C_2(n, p) \log^p \alpha}{2C_1(n, p)} \leq \abs{Q}^{\frac{p-n}{n}}\int_{\sqrt{2} Q} \Sigma^{\frac{p}{n}} \leq \abs{\B^n(x_0, D(\rho_0))}^\frac{p-n}{n} \int_{\B^n(x_0, D(\rho_0))} \Sigma^\frac{p}{n},
		\]
		which is in direct contradiction with the assumed \eqref{eq:dist_lemma_Sigma_assumption_2}. Thus, there exists no $x_1 \in V$ such that $\abs{f(x_1) - f(x_0)} > \alpha^2 \rho$, completing the proof.
	\end{proof}
	
	\section{Bound on the linear distortion}
	
	Our objective in this section is to prove Theorem \ref{thm:limsup_lin_distortion}. We begin by recalling the statement.
	
	\begin{customthm}{\ref{thm:limsup_lin_distortion}}
		Let $\Omega \subset \R^n$ be a domain, let $x_0 \in \Omega$, and let $f \in W^{1,n}_\loc(\Omega, \R^n)$ be a non-constant continuous map. Suppose that $f$ has a $(K, \Sigma)$-quasiregular value at $f(x_0)$, where $K \geq 1$ and $\Sigma \in L^{1+\eps}_\loc(\Omega)$ with $\eps > 0$. Then
		\[
			\limsup_{r \to 0} \frac{L_f(x_0, r)}{l_f(x_0, r)} \leq C(n, K, i(x_0, f)) < \infty.
		\]
	\end{customthm}

	Our strategy for proving Theorem \ref{thm:limsup_lin_distortion} is to search for ring condensers of the form $(C_1, \R^n \setminus C_2)$, where $C_1 = \overline{U_f(x_0, \rho_1)}$ with $\rho_1$ close to $l_f(x_0, r)$, $C_2$ is the unbounded component of $\R^n \setminus U_f(x_0, \rho_2)$ with $\rho_2$ close to $L_f(x_0, r)$, and both $C_1$ and $C_2$ meet $\partial \B^n(x_0, r)$. Such a ring condenser will allow us to apply Lemma \ref{lem:ring_condenser_capacity} to bound the capacity of this condenser from below, which will lead to a distortion estimate. Corollary \ref{cor:inner_radius_lemma} gives us that $C_1$ meets $\partial \B^n(x_0, r)$ for any choice of $\rho_1 > l_f(x_0, r)$. 
	
	However, $C_2$ is a different story, as while Lemma \ref{lem:inner_and_outer_radius_prelemma} guarantees that $\R^n \setminus U_f(x_0, \rho_2)$ meets $\B^n(x_0, r)$ if $\rho_2 < L_f(x_0, r)$, nothing guarantees that the component of $\R^n \setminus U_f(x_0, \rho_2)$ that meets $\B^n(x_0, r)$ is the unbounded one. This is where our key Lemma \ref{lem:the_important_distortion_lemma} reveals its significance, as it allows us to exclude this possibility by selecting a $\rho_2$ that is smaller by a fixed margin. In particular, the precise result we use is as follows.
	
	\begin{lemma}\label{lem:outer_distortion_lemma_with_QRvar}
		Let $\Omega \subset \R^n$ be a domain, let $x_0 \in \Omega$, let $f \in W^{1,n}_\loc(\Omega, \R^n)$ be a non-constant continuous map, and let $\alpha >  1$. Suppose that $f$ has a $(K, \Sigma)$-quasiregular value at $f(x_0)$, where $K \geq 1$ and $\Sigma \in L^{1+\eps}_\loc(\Omega)$ with $\eps > 0$. Then there exists a $\rho_0 = \rho_0(\Omega, f, x_0, \alpha, K, \Sigma)$ such that, if $\rho \in (0, \rho_0)$ and $\rho < L_f(x_0, r)/\alpha^2$ for some $r > 0$, then the unbounded component $C$ of $\R^n \setminus U_f(x_0, \rho)$ satisfies $C \cap \partial \B^n(x_0, r) \neq \emptyset$.
	\end{lemma}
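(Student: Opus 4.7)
The plan is to directly reduce the statement to Lemma~\ref{lem:the_important_distortion_lemma}, which packages precisely the control needed on bounded components of $\R^n \setminus U_f(x_0, \rho)$. First, I would let $\rho_0$ be the constant $\rho_0(\Omega, f, x_0, \alpha, K, \Sigma)$ produced by Lemma~\ref{lem:the_important_distortion_lemma}, shrinking it if necessary so that $U_f(x_0, \rho)$ is also a strong normal neighborhood of $x_0$ for every $\rho \in (0, \rho_0)$ (possible by Lemma~\ref{lem:Ufs_are_strong_normal}, since $i(x_0, f) > 0$ by Theorem~\ref{thm:1p_Reshetnyak}).

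Next, fix $\rho \in (0, \rho_0)$ and $r > 0$ with $\alpha^2 \rho < L_f(x_0, r)$. By the defining supremum of $L_f$, there exists a point $x_1 \in \B^n(x_0, r)$ with $\abs{f(x_1) - f(x_0)} > \alpha^2 \rho$. Since every $y \in U_f(x_0, \rho)$ satisfies $\abs{f(y) - f(x_0)} < \rho < \alpha^2 \rho$, the point $x_1$ cannot lie in $U_f(x_0, \rho)$, so it lies in some connected component $W$ of $\R^n \setminus U_f(x_0, \rho)$.

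The key step is to rule out that $W$ is bounded. If it were, Lemma~\ref{lem:the_important_distortion_lemma} would force $\abs{f(x_1) - f(x_0)} \leq \alpha^2 \rho$, contradicting the strict inequality above; hence $W$ must be the unbounded component $C$. To conclude, I would invoke a standard connectedness argument: since $x_1 \in C \cap \B^n(x_0, r)$ while $C$ is connected and unbounded, $C$ is not contained in $\overline{\B^n}(x_0, r)$, so $C$ must intersect the boundary sphere $\partial \B^n(x_0, r)$.

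Since the delicate geometric control has already been absorbed into Lemma~\ref{lem:the_important_distortion_lemma}, I do not anticipate any substantive obstacle; the present lemma is essentially a clean formal corollary of that result combined with the definition of $L_f(x_0, r)$ as a supremum.
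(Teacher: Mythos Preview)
Your proposal is correct and follows essentially the same route as the paper: choose $\rho_0$ so that Lemma~\ref{lem:the_important_distortion_lemma} applies, use the definition of $L_f(x_0,r)$ to locate a point $x_1\in\B^n(x_0,r)$ with $\abs{f(x_1)-f(x_0)}>\alpha^2\rho$, invoke Lemma~\ref{lem:the_important_distortion_lemma} to force $x_1$ into the unbounded component, and finish with the connectedness argument. The only cosmetic difference is that the paper phrases the key step as ``$\abs{f-f(x_0)}\le\alpha^2\rho$ on $\R^n\setminus C$'' rather than arguing by contradiction on a single bounded component.
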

	\begin{proof}
		We select $\rho_0$ small enough that $U_f(x_0, \rho)$ are normal neighborhoods of $x_0$, and the assumptions of Lemma \ref{lem:the_important_distortion_lemma} apply. In particular, the fact that $U_f(x_0, \rho)$ are normal neighborhoods of $x_0$ assures the existence of a single unbounded component $C$ of $\R^n \setminus U_f(x_0, \rho)$. 
		
		Now, Lemma \ref{lem:the_important_distortion_lemma} assures us that $\abs{f - f(x_0)} \leq \alpha^2 \rho$ on $\R^n \setminus C$. Since $\alpha^2 \rho < L_f(x_0, r)$, there exists a point $x_1 \in \B^n(x_0, r)$ such that $\abs{f(x_1) - f(x_0)} > \alpha^2 \rho$. In particular, we must have $x_1 \in C$, and thus $C \cap \B^n(x_0, r) \neq \emptyset$. Since $C$ is an unbounded connected set that meets $\B^n(x_0, r)$, it follows that $C \cap \partial \B^n(x_0, r) \neq \emptyset$.
	\end{proof}

	We are now ready to prove Theorem \ref{thm:limsup_lin_distortion}. 
	
	\begin{proof}[Proof of Theorem \ref{thm:limsup_lin_distortion}]
		We again note that $f^{-1}\{f(x_0)\}$ is discrete and that $i(x_0, f) > 0$ by Theorem \ref{thm:1p_Reshetnyak}. We fix $\alpha > 1$ and suppose that $\rho_0$ is small enough that $U_f(x_0, \rho)$ is a strong normal neighborhood of $x_0$ and the assumptions of Lemma \ref{lem:outer_distortion_lemma_with_QRvar} are satisfied for $\rho < \rho_0$. By continuity of $f$, we then select $r_0$ small enough that $\B^n(x_0, r_0) \subset U_f(x_0, \rho_0 / 2)$. In particular, for any $r < r_0$, we have $L_f(x_0, r) < \rho_0$.
		
		Our objective is to bound $L_f(x_0, r)/l_f(x_0, r)$ from above for $r < r_0$. Note that $l_f(x_0, r) > 0$ by the openness -part of the single point Reshetnyak's theorem. If we have $L_f(x_0, r)/l_f(x_0, r) \leq \alpha^5$, then we are done. Thus, we focus on the case
		\begin{equation}\label{eq:lin_dist_case}
			\frac{L_f(x_0, r)}{l_f(x_0, r)} > \alpha^5.
		\end{equation} 
		We select $\rho_1 = \alpha l_f(x_0, r)$ and $\rho_2 = L_f(x_0, r)/\alpha^3$, we let $S_1 = \overline{U_f(x_0, \rho_1)}$, and we let $S_2 $ be the unbounded component of $\R^n \setminus U_f(x_0, \rho_2)$. Note that by \eqref{eq:lin_dist_case} and how we selected $\rho_1$ and $\rho_2$, we have 
		\begin{equation}\label{eq:lin_dist_cons_1}
			\frac{\rho_2}{\rho_1} = \frac{1}{\alpha^4} \frac{L_f(x_0, r)}{l_f(x_0, r)} > \alpha.
		\end{equation}
		
		Now, by Corollary \ref{cor:inner_radius_lemma} and Lemma \ref{lem:outer_distortion_lemma_with_QRvar}, both $S_1 $ and $S_2 $ meet $\partial \B^n(x_0, r)$. Moreover, $x_0 \in S_1 $, $S_2 $ is unbounded, and both $S_1 $ and $S_2 $ are connected. Thus, Lemma \ref{lem:ring_condenser_capacity} yields that
		\[
			\capac(S_1 , \R^n \setminus S_2 ) \geq C_1(n) > 0.
		\]
		On the other hand, since $U_{f}(x_0, \rho_1)$ and $U_f(x_0, \rho_2)$ are normal neighborhoods of $x_0$, we have $f(x_0) \notin f(U_f(x_0, \rho_2) \setminus \overline{U_f(x_0, \rho_1)})$. Since also $f(x_0) \notin f V$ for any bounded components $V$ of $\R^n \setminus U_f(x_0, \rho_2)$ by Lemma \ref{lem:normal_nbhd_props}, we have $f(x_0) \notin f(\R^n \setminus (S_1 \cup S_2 ))$. Thus, we may apply Lemma \ref{lem:basic_adm_function_estimate}, the quasiregular value assumption, and Lemma \ref{lem:level_set_lemma} \ref{enum:level_set_integral_over_U} to obtain
		\begin{multline*}
			C_1(n) \leq \capac_n(S_1 , \R^n \setminus S_2 )\\
			\leq \frac{1}{\log^n(\rho_2/\rho_1)}
			\int_{U_f(x_0, \rho_2) \setminus U_f(x_0, \rho_1)} \frac{\abs{Df}^n}{\abs{f - f(x_0)}^n}\\
			\leq \frac{C_2(n) K i(x_0, f)}{\log^{n-1}(\rho_2/\rho_1)} + \frac{1}{\log^n(\rho_2/\rho_1)} \int_{U_f(x_0, \rho_2)} \Sigma.
		\end{multline*}
		Thus, using \eqref{eq:lin_dist_cons_1} and $L_f(x_0, r)/l_f(x_0, r) < 4\alpha^4\rho_2/\rho_1$, we get
		\begin{multline*}
			\frac{1}{C_3(n)} \log^{n-1} \frac{L_f(x_0, r)}{l_f(x_0, r)}\leq \log^{n-1}(4\alpha^4) + \log^{n-1} \frac{\rho_2}{\rho_1}\\
			\leq \log^{n-1}(4\alpha^4) + \frac{C_2(n) K i(x_0, f)}{C_1(n)} + \frac{1}{C_1(n) \log (\rho_2/\rho_1)} \int_{U_f(x_0, \rho_2)} \Sigma\\
			\leq \log^{n-1}(4\alpha^4) + \frac{C_2(n) K i(x_0, f)}{C_1(n)} + \frac{1}{C_1(n)\log(\alpha)} \int_{U_f(x_0, \rho_2)} \Sigma.
		\end{multline*}
		Since $\rho_2 < L_f(x_0, r)$ and $L_f(x_0, r) \to 0$ as $r \to 0$, it follows that the last term in the estimate involving $\Sigma$ tends to zero, and we thus obtain the desired $\limsup$-estimate
		\begin{multline*}
			\limsup_{r \to 0} \frac{L_f(x_0, r)}{l_f(x_0, r)}\\
			 \leq \max \left( \alpha^5, \exp \left(\sqrt[n-1]{ C_3(n) \log^{n-1} 4 \alpha^4 + \frac{ C_3(n) C_2(n) K i(x_0, f)}{C_1(n)}}\right)  \right).
		\end{multline*}
		Note that by letting $\alpha \to 1$ and using $\log^{n-1} 4 \leq (\log^{n-1} 4) K i(x_0, f)$, the above estimate takes a similar form as the corresponding estimate for $K$-quasiregular maps, namely
		\[
			\limsup_{r \to 0} \frac{L_f(x_0, r)}{l_f(x_0, r)} \leq
			\exp \left([C(n) K i(x_0, f)]^\frac{1}{n-1}\right).
		\]
	\end{proof}

	%%%%%%%%%%%%%%%%%%%%%%%%%%%%%%%%%%%%%%%%%%%%%%%%%%%%%%%%%%%%%%
	\section{Rescaling principle}
	
	In this section, we prove the rescaling principle for quasiregular values given in Theorem \ref{thm:rescaling}. 
	
	\subsection{Linear distortion and $L^n$-estimates}
	
	We begin the section by pointing out that the linear distortion inequality of Theorem \ref{thm:limsup_lin_distortion} can be connected to $L^n$-norms of the derivative. We only need the lower bound of the following Lemma, but we find the simpler upper bound to also be worth including for completeness.
	
	\begin{lemma}\label{lem:l_Df_L_sandwich}
		Let $\Omega \subset \R^n$ be a domain, let $x_0 \in \Omega$, and let $f \in W^{1,n}_\loc(\Omega, \R^n)$ be a non-constant continuous map. Suppose that $f$ has a $(K, \Sigma)$-quasiregular value at $f(x_0)$, where $K \geq 1$ and $\Sigma \in L^{1+\eps}_\loc(\Omega)$ with $\eps > 0$. Then there exists a constant $C = C(n, K, i(f, x_0))$ such that
		\[
			\limsup_{r \to 0} \frac{L_f(x_0, r)}{\norm{Df}_{L^{n}(\B^n(x_0, r))}} \leq C < \infty.
		\]
		and 
		\[
			\liminf_{r \to 0} \frac{l_f(x_0, r)}{\norm{Df}_{L^{n}(\B^n(x_0, r))}} \geq C^{-1} > 0,
		\]
	\end{lemma}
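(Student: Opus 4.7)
The plan is to relate both $L_f(x_0, r)$ and $l_f(x_0, r)$ to the Jacobian integral $\int_{\B^n(x_0, r)} J_f$ and then pass to $\int_{\B^n(x_0, r)} \abs{Df}^n$ using the pointwise inequalities $J_f \leq \abs{Df}^n$ (true since $\abs{Df}$ is the operator norm) and the quasiregular value bound $\abs{Df}^n \leq K J_f + \abs{f-f(x_0)}^n \Sigma$. The pivotal identity will be the change of variables formula on normal neighborhoods,
\[
\int_{U_f(x_0, \rho)} J_f = C(n) i(x_0, f) \rho^n,
\]
obtained from Lemma \ref{lem:change_of_vars} once $\rho$ is small enough that $U_f(x_0, \rho)$ is a strong normal neighborhood, which is guaranteed by Lemma \ref{lem:Ufs_are_strong_normal}.

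For the lower bound on $l_f$---the direction actually needed in Theorem \ref{thm:rescaling}---I will first observe that since $\B^n(x_0, r)$ is connected and $f\B^n(x_0, r) \subset \overline{\B^n}(f(x_0), L_f(x_0, r))$, we have $\B^n(x_0, r) \subset U_f(x_0, \rho')$ for every $\rho' > L_f(x_0, r)$. I then split
\[
\int_{\B^n(x_0, r)} J_f = \int_{U_f(x_0, \rho')} J_f - \int_{U_f(x_0, \rho') \setminus \B^n(x_0, r)} J_f,
\]
and bound the excess below using the pointwise inequality $K J_f \geq -\abs{f-f(x_0)}^n \Sigma \geq -(\rho')^n \Sigma$ on $U_f(x_0, \rho')$ provided by the quasiregular value condition. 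Letting $\rho' \to L_f(x_0, r)^+$ and using that $\diam U_f(x_0, L_f(x_0, r)) \to 0$ as $r \to 0$ by Lemma \ref{lem:Ufs_are_strong_normal} (so that $\int_{U_f} \Sigma \to 0$ by absolute continuity of the Lebesgue integral), this yields $\int_{\B^n(x_0, r)} J_f \leq C L_f(x_0, r)^n (1 + o(1))$. Integrating the quasiregular value inequality over $\B^n(x_0, r)$ with $\abs{f - f(x_0)} \leq L_f(x_0, r)$ there then gives $\norm{Df}_{L^n(\B^n(x_0, r))}^n \leq C' L_f(x_0, r)^n (1 + o(1))$. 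Finally, Theorem \ref{thm:limsup_lin_distortion} replaces $L_f$ by $C_0 l_f$ to finish.

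The upper bound on $L_f$ is dual: I seek a $\rho$ comparable to $L_f(x_0, r)$ with the reverse inclusion $U_f(x_0, \rho) \subset \B^n(x_0, r)$, from which
\[
\int_{\B^n(x_0, r)} \abs{Df}^n \geq \int_{U_f(x_0, \rho)} J_f = C(n) i(x_0, f) \rho^n \gtrsim L_f(x_0, r)^n
\]
would follow immediately. The existence of such a $\rho$ amounts to an inverse distortion bound $\limsup_{\rho \to 0} L_f^*(x_0, \rho)/l_f^*(x_0, \rho) < \infty$, which I expect to extract by combining Theorem \ref{thm:limsup_lin_distortion} with the cancellation identity $L_f(x_0, l_f^*(x_0, \rho)) = \rho$ from Lemma \ref{lem:lin_inv_dil_cancel} in a short chain argument, and then proceeding exactly as in the lower-bound direction.

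The hardest part will be securing this inverse distortion control for the upper direction; the lower direction is essentially free, since the outer containment $\B^n(x_0, r) \subset U_f(x_0, \rho')$ is immediate from the definition of $L_f$ and requires no additional control beyond Theorem \ref{thm:limsup_lin_distortion}.
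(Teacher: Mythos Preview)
Your plan for the $\liminf$ estimate is correct and coincides with the paper's argument: both use the inclusion $\B^n(x_0,r)\subset U_f(x_0,\rho')$ for $\rho'>L_f(x_0,r)$, the change-of-variables identity $\int_{U_f(x_0,\rho')}J_f = i(x_0,f)\,\abs{\B^n}(\rho')^n$, the pointwise lower bound $KJ_f\ge -\abs{f-f(x_0)}^n\Sigma$ on the excess, and then Theorem~\ref{thm:limsup_lin_distortion} to replace $L_f$ by $l_f$. (The paper simply takes $\rho'=2L_f(x_0,r)$ rather than sending $\rho'\to L_f(x_0,r)^+$, which is cosmetic.)

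For the $\limsup$ estimate, however, your plan has a real gap. You claim that the inverse distortion bound $\limsup_{\rho\to 0}L_f^*(x_0,\rho)/l_f^*(x_0,\rho)<\infty$ can be extracted from Theorem~\ref{thm:limsup_lin_distortion} and Lemma~\ref{lem:lin_inv_dil_cancel} by ``a short chain argument''. I do not see how. Lemma~\ref{lem:lin_inv_dil_cancel} gives only $L_f(x_0,l_f^*(x_0,\rho))=\rho$, and the paper explicitly warns immediately afterwards that this is the \emph{only} clean identity between the forward and inverse dilatations absent openness---the other candidate relations can fail. A bound on $L_f(x_0,s)/l_f(x_0,s)$ says nothing about how far $U_f(x_0,\rho)$ may extend beyond $\B^n(x_0,l_f^*(x_0,\rho))$; controlling $L_f^*(x_0,\rho)$ in terms of $l_f^*(x_0,\rho)$ for a merely continuous map with a quasiregular value would presumably require work on the scale of Section~\ref{sect:bounded_components}, not a chain of inequalities.

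The detour is in any case unnecessary. The paper's $\limsup$ argument is a one-liner: by Theorem~\ref{thm:limsup_lin_distortion}, \eqref{eq:linear_distortion_ball_sandwich}, the Lusin~(N) property (Lemma~\ref{lem:qrval_Lusin_N}) with the area formula, and Hadamard's inequality $\abs{J_f}\le\abs{Df}^n$,
\[
L_f(x_0,r)^n \;\le\; C_1^n\, l_f(x_0,r)^n \;\le\; \frac{C_1^n}{\abs{\B^n}}\,\abs{f\B^n(x_0,r)} \;\le\; \frac{C_1^n}{\abs{\B^n}}\int_{\B^n(x_0,r)}\abs{J_f} \;\le\; \frac{C_1^n}{\abs{\B^n}}\int_{\B^n(x_0,r)}\abs{Df}^n.
\]
So contrary to your closing assessment, the $\limsup$ direction is the trivial one; it is the $\liminf$ direction that needs the degree-theoretic change of variables and the error control via $\Sigma$.
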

	\begin{proof}
		We abbreviate $B_r= \B^n(x_0, r)$. We may assume that $r$ is small enough that $U_f(x_0, 2 L_f(x_0, r))$ is a normal neighborhood of $x_0$, and also small enough that 
		\begin{equation}\label{eq:dist_bound_for_small_r_1}
			\frac{L_f(x_0, r)}{l_f(x_0, r)} \leq C_1 = C_1(n, K, i(x_0, f)),
		\end{equation}
		where $C_1/2$ is the upper bound for the limit provided by Theorem \ref{thm:limsup_lin_distortion}. By \eqref{eq:linear_distortion_ball_sandwich}, we have the measure estimate
		\begin{equation}\label{eq:image_volume_sandwich}
			\abs{\B^n} \, l_f^n(x_0, r) \leq \abs{f B_r} \leq \abs{\B^n} L_f^n(x_0, r).
		\end{equation}
		
		We start with the $\limsup$ -estimate, which is simpler. Indeed, we can use \eqref{eq:dist_bound_for_small_r_1}, \eqref{eq:image_volume_sandwich}, and a change of variables to obtain
		\[
			L_f^n(x_0, r)
			\leq C_1^n l_f^n(x_0, r)
			\leq \frac{C_1^n\abs{f B_r}}{\abs{\B^n}}
			\leq \frac{C_1^n}{\abs{\B^n}} \int_{B_r} \abs{J_f}
			\leq \frac{C_1^n}{\abs{\B^n}} \int_{B_r} \abs{Df}^n,
		\]
		which proves the $\limsup$ -estimate.
		
		The $\liminf$ -estimate is slightly more involved, because a change of variables -estimate in this direction requires considering the degree. We start similarly as in the previous case, by estimating that
		\[
			l_f^n(x_0, r) \geq \frac{L_f^n(x_0, r)}{C_1^n} = \frac{1}{2^n C_1^n \abs{\B^n}} \abs{\B^n(f(x_0), 2L_f(x_0, r))}.
		\]
		We then utilize our assumption that $U_f(x_0, 2L_f(x_0, r))$ is a normal neighborhood of $x_0$, where the change of variables result from Lemma \ref{lem:change_of_vars} and the definition of quasiregular values \eqref{eq:qrval_def} yield that
		\begin{multline*}
			\abs{\B^n(f(x_0), 2L_f(x_0, r))} = \frac{1}{i(x_0, f)} \int_{U_f(x_0, 2L_f(x_0, r))} J_f\\
			\geq  \frac{1}{K i(x_0, f)} \int_{U_f(x_0, 2L_f(x_0, r))} \left(\abs{Df}^n - \abs{f - f(x_0)}^n \Sigma \right).
		\end{multline*}
		Now, since $B_r$ is a connected set with $f(B_r) \subset \overline{\B^n}(f(x_0), L_f(x_0, r)) \subset \B^n(f(x_0), 2L_f(x_0, r))$, we have $B_r \subset U_f(x_0, 2L_f(x_0, r))$. Thus, we may estimate
		\begin{multline*}
			\int_{U_f(x_0, 2L_f(x_0, r))} \left(\abs{Df}^n - \abs{f - f(x_0)}^n \Sigma \right)\\
			\geq \int_{B_r} \abs{Df}^n - 2^n L_f^n(x_0, r) \int_{U_f(x_0, 2L_f(x_0, r))} \Sigma\\
			\geq \int_{B_r} \abs{Df}^n - 2^n C_1^n l_f^n(x_0, r) \int_{U_f(x_0, 2L_f(x_0, r))} \Sigma.
		\end{multline*}
		Altogether, after chaining the previous estimates and grouping up all the terms with $l_f^n(x_0, r)$ in them, we get
		\[
			\frac{l_f^n(x_0, r)}{\norm{Df}^n_{L^{n}(\B^n(x_0, r))}}
			\geq \frac{1}{2^n C_1^n} \left( \abs{\B^n} K i(x_0, f) + \int_{U_f(x_0, 2L_f(x_0, r))} \Sigma\right)^{-1}.
		\]
		This proves the $\liminf$ -estimate, as the term with $\Sigma$ tends to 0 as $r \to 0$.
	\end{proof}
	
	\subsection{Construction of the sequence of rescalings}
	
	We then define our choice of rescaled maps that are used throughout the proof of Theorem \ref{thm:rescaling}.
	
	\begin{defn}\label{def:chosen_maps}
		Let $\Omega \subset \R^n$ be a domain and let $x_0 \in \Omega$. Let $f \in W^{1,n}_\loc(\Omega, \R^n)$ be a continuous map with a $(K, \Sigma)$-quasiregular value at $f(x_0)$, where $K \geq 1$ and $\Sigma \in L^{1+\eps}_\loc(\Omega)$ with $\eps > 0$. We denote $y_0 = f(x_0)$. Suppose that $f$ is not the constant function $f \equiv y_0$. 
		
		We select $\rho_0 > 0$ small enough that $U_f(x_0, \rho_0)$ is a normal neighborhood of $x_0$, and we select $r_0 > 0$ small enough that $\B^n(x_0, r_0) \subset U_f(x_0, \rho_0)$. Now, for every $r \in (0, r_0]$, we define a function $g_r \colon \B^n \to \R^n$ by
		\begin{equation}\label{eq:gr_def}
			g_r(x) = f(x_0 + r x) - y_0.
		\end{equation}
		Since $\B^n(x_0, r) \cap f^{-1}\{y_0\} \subset U_f(x_0, \rho) \cap f^{-1}\{y_0\} = \{x_0\}$, we hence have $g_r^{-1}\{0\} = \{0\}$. Therefore, $\norm{g_r}_{L^\infty(\B^n)} > 0$. Thus, we may also define for every $r \in (0, r_0]$ a function $h_r \colon \B^n \to \R^n$ by
		\begin{equation}\label{eq:hr_def}
			h_r(x) = \frac{g_r(x)}{\norm{g_r}_{L^\infty(\B^n)}}.
		\end{equation}
		Notably, every $h_r$ is a continuous map in $W^{1,n}(\B^n, \R^n)$.
	\end{defn}
	
	We first observe that the maps $h_r$ have a $(K, \Sigma_r)$-quasiregular value at the origin, with the weights $\Sigma_r$ getting smaller in some sense as $r \to 0$.
	
	\begin{lemma}\label{lem:hr_qrval_at_origin}
		Under the setting of Definition \ref{def:chosen_maps}, for every $r \in (0, r_0]$, the map $h_r$ has a $(K, \Sigma_r)$-quasiregular value at 0, where
		\[
			\Sigma_r(x) = r^n \Sigma(x_0 + rx)
		\]
		for a.e.\ $x \in \B^n$. In particular, we have
		\[
			\norm{\Sigma_r}_{L^1(\B^n)} = \norm{\Sigma}_{L^1(\B^n(x_0, r))} \xrightarrow[r \to 0]{} 0.
		\]
	\end{lemma}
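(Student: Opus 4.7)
The proof is essentially a change-of-variables computation, and the main task is to keep track of the scaling factors carefully. The plan is to substitute the definitions of $g_r$ and $h_r$ into the pointwise inequality \eqref{eq:qrval_def} satisfied by $f$, and then verify that after rescaling we recover the corresponding inequality with the advertised weight $\Sigma_r$.

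First I would compute the derivatives. By the chain rule, for a.e.\ $x \in \B^n$ one has $Dg_r(x) = r\,Df(x_0 + rx)$ and therefore $J_{g_r}(x) = r^n J_f(x_0 + rx)$. Dividing by the positive constant $\|g_r\|_{L^\infty(\B^n)}$, we obtain
\[
    |Dh_r(x)| = \frac{r\,|Df(x_0 + rx)|}{\|g_r\|_{L^\infty(\B^n)}}
    \qquad\text{and}\qquad
    J_{h_r}(x) = \frac{r^n J_f(x_0 + rx)}{\|g_r\|_{L^\infty(\B^n)}^n}.
\]
In particular $h_r \in W^{1,n}(\B^n, \R^n)$, which also justifies the brief remark at the end of Definition \ref{def:chosen_maps}.

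Next I would insert these into \eqref{eq:qrval_def}. Applied at the point $y = x_0 + rx$, the quasiregular value inequality for $f$ at $y_0$ reads
\[
    |Df(x_0 + rx)|^n \leq K J_f(x_0 + rx) + |f(x_0 + rx) - y_0|^n\,\Sigma(x_0 + rx)
\]
for a.e.\ $x \in \B^n$ (this is where the change-of-variables on the Lebesgue null exceptional set must be acknowledged, but it is routine). Multiplying by $r^n/\|g_r\|_{L^\infty(\B^n)}^n$ and using that $f(x_0 + rx) - y_0 = g_r(x) = \|g_r\|_{L^\infty(\B^n)}\, h_r(x)$, the two terms on the right-hand side become precisely $K J_{h_r}(x)$ and $|h_r(x)|^n\,r^n \Sigma(x_0 + rx) = |h_r(x) - 0|^n \Sigma_r(x)$, while the left-hand side becomes $|Dh_r(x)|^n$. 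This establishes that $h_r$ has a $(K, \Sigma_r)$-quasiregular value at $0$.

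Finally, the $L^1$-norm statement is an immediate change of variables: substituting $y = x_0 + rx$ gives
\[
    \|\Sigma_r\|_{L^1(\B^n)} = \int_{\B^n} r^n \Sigma(x_0 + rx)\,\dd x = \int_{\B^n(x_0, r)} \Sigma(y)\,\dd y = \|\Sigma\|_{L^1(\B^n(x_0, r))},
\]
and this quantity tends to $0$ as $r \to 0$ by absolute continuity of the integral (which applies since $\Sigma \in L^{1+\eps}_\loc(\Omega) \subset L^1_\loc(\Omega)$ and $\B^n(x_0, r_0) \subset \Omega$). There is no serious obstacle in this lemma; it is a bookkeeping result whose value lies in setting up the normalized family $\{h_r\}$ for the main rescaling argument to follow.
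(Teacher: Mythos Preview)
Your proof is correct and follows essentially the same change-of-variables computation as the paper; the only cosmetic difference is that the paper first verifies the inequality for $g_r$ and then notes that scalar multiplication preserves it, whereas you work directly with $h_r$.
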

	\begin{proof}
		We have $g_r = (f \circ \iota_r) - y_0$, where $\iota_r \colon \B^n \to \R^n$ is the affine map $x \mapsto x_0 + rx$. In particular, $\iota_r$ is conformal with $\abs{D\iota_r}^n = J_{\iota_r} \equiv r^n$. Hence, we may compute
		\begin{align*}
			\abs{Dg_r(x)}^n 
			&= \abs{Df(\iota_r(x))}^n J_{\iota_r}(x)\\
			&\leq K J_f(\iota_r(x)) J_{\iota_r}(x) + \abs{f(\iota_r(x)) - y_0}^n \Sigma(\iota_r(x)) J_{\iota_r}(x)\\
			&= K J_{g_r}(x) + \abs{g_r(x)}^n \Sigma_r(x)
		\end{align*}
		for a.e.\ $x \in \B^n$. Thus, $g_r$ has a $(K, \Sigma_r)$-quasiregular value at 0. The same is hence true for $h_r$, since $h_r$ is a scalar multiple of $g_r$. Finally, a standard change of variables shows that
		\[
			\int_{\B^n} \Sigma_r = \int_{\B^n} (\Sigma \circ \iota_r) J_{\iota_r} = \int_{\B^n(x_0, r)} \Sigma,
		\]
		where the right hand side tends to 0 as $r \to 0$ since $\Sigma$ is $L^1_\loc$-integrable.
	\end{proof}
	
	We then use Lemma \ref{lem:l_Df_L_sandwich} to obtain the following estimate for the $L^n$-norm of $Dh_r$.
	
	\begin{lemma}\label{lem:Wn_norm_for_rescaled_maps}
		Under the setting of Definition \ref{def:chosen_maps}, if $r_0$ is small enough, then for every $r \in (0, r_0]$ we have
		\[
			C^{-1} \leq \norm{Dh_r}_{L^n(\B^n)} \leq C,
		\]
		where $C = C(n, K, i(x_0, f))$.
	\end{lemma}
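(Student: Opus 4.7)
The plan is to reduce the estimate to Lemma \ref{lem:l_Df_L_sandwich} via two elementary identities relating the normalizations of $h_r$ to invariants of $f$ on $\B^n(x_0, r)$.

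First I would identify $\norm{g_r}_{L^\infty(\B^n)} = L_f(x_0, r)$. This is immediate from the definitions: since $g_r(x) = f(x_0 + rx) - f(x_0)$ and $f$ is continuous, the substitution $y = x_0 + rx$ gives
\[
\norm{g_r}_{L^\infty(\B^n)} = \sup_{x \in \B^n} \abs{f(x_0+rx) - f(x_0)} = \sup_{y \in \B^n(x_0, r)} \abs{f(y) - f(x_0)} = L_f(x_0, r).
\]
Next I would compute $\norm{Dg_r}_{L^n(\B^n)} = \norm{Df}_{L^n(\B^n(x_0, r))}$ by a conformal change of variables. Writing $g_r = (f \circ \iota_r) - y_0$ with $\iota_r(x) = x_0 + rx$ conformal with Jacobian $r^n$, the chain rule gives $\abs{Dg_r(x)}^n = \abs{Df(\iota_r(x))}^n \cdot r^n$, and substituting $y = \iota_r(x)$ yields the claimed equality. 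Combining these two identities with the definition $h_r = g_r/\norm{g_r}_{L^\infty(\B^n)}$ gives
\[
\norm{Dh_r}_{L^n(\B^n)} = \frac{\norm{Df}_{L^n(\B^n(x_0, r))}}{L_f(x_0, r)}.
\]

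At this point the result follows directly from Lemma \ref{lem:l_Df_L_sandwich}. For the upper bound, since $l_f \leq L_f$, the $\liminf$-estimate gives $\norm{Df}_{L^n(\B^n(x_0, r))} \leq C \, l_f(x_0, r) \leq C \, L_f(x_0, r)$ for all sufficiently small $r$. For the lower bound, the $\limsup$-estimate gives $L_f(x_0, r) \leq C \, \norm{Df}_{L^n(\B^n(x_0, r))}$ for all sufficiently small $r$. Shrinking $r_0$ if necessary so that both estimates hold for $r \in (0, r_0]$ yields $C^{-1} \leq \norm{Dh_r}_{L^n(\B^n)} \leq C$ with $C = C(n, K, i(x_0, f))$.

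There is no real obstacle here; the entire content is packaged into Lemma \ref{lem:l_Df_L_sandwich}, which itself is where the linear distortion bound of Theorem \ref{thm:limsup_lin_distortion} was deployed. The only point requiring care is the choice of $r_0$: it must be small enough that the rescaled maps $h_r$ are well-defined (i.e.\ $\B^n(x_0, r) \subset U_f(x_0, \rho_0)$ so that $g_r^{-1}\{0\} = \{0\}$ and hence $\norm{g_r}_{L^\infty(\B^n)} > 0$), and also small enough that both the $\limsup$ and $\liminf$ bounds of Lemma \ref{lem:l_Df_L_sandwich} can be used uniformly on $(0, r_0]$.
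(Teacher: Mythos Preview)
Your proof is correct and follows essentially the same approach as the paper: both reduce to the identity $\norm{Dh_r}_{L^n(\B^n)} = \norm{Df}_{L^n(\B^n(x_0,r))}/L_f(x_0,r)$ via the conformal change of variables and the identification $\norm{g_r}_{L^\infty(\B^n)} = L_f(x_0,r)$, and then invoke the $\limsup$- and $\liminf$-parts of Lemma~\ref{lem:l_Df_L_sandwich} (together with $l_f \le L_f$) for the two bounds.
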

	\begin{proof}
		We again use $\iota_r \colon \B^n \to \R^n$ to denote the map $x \mapsto x_0 + rx$. Then we can compute that
		\[
			\int_{\B^n} \abs{Dh_r}^n = \frac{1}{\norm{g_r}^n_{L^\infty(\B^n)}}\int_{\B^n} (\abs{Df}^n \circ \iota_r) J_{\iota_r} = \frac{\norm{Df}^n_{L^n(\B^n(x_0, r))}}{L_f^n(x_0, r)}.  
		\]
		Thus, given small enough $r_0 > 0$, the lower bound follows immediately from the $\limsup$ -part of Lemma \ref{lem:l_Df_L_sandwich}, and the upper bound follows by combining the $\liminf$ -part of Lemma \ref{lem:l_Df_L_sandwich} with the trivial estimate $L_f(x_0, r) \geq l_f(x_0, r)$.
	\end{proof}
	
	\subsection{Existence of limits and convergence}
	
	We then wish to prove that the family $\{h_r : r \in (0, r_0]\}$ is normal for a small enough $r_0$. This will follow from the Arzel\`a-Ascoli theorem by showing that $\{h_r : r \in (0, r_0]\}$ is totally bounded and locally equicontinuous. Total boundedness is trivial, since $\norm{h_r}_{L^\infty(\B^n)} = 1$ for every $r$. Equicontinuity instead is our first main use of the estimate of Lemma \ref{lem:Wn_norm_for_rescaled_maps} that we derived from the linear distortion bound.
	
	\begin{lemma}\label{lem:local_equicontinuity}
		Under the setting of Definition \ref{def:chosen_maps}, if $r_0 > 0$ is sufficiently small, then the family $\{h_r : r \in (0, r_0]\}$ is locally equicontinuous.
	\end{lemma}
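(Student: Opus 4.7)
The plan is to deduce locally uniform Hölder continuity of the family $\{h_r\}$ from the quantitative Hölder estimate of Lemma \ref{lem:qrval_local_holder}, and then observe that uniform Hölder control implies equicontinuity.

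First, I would assemble the three uniform bounds required as input to Lemma \ref{lem:qrval_local_holder}. By construction $\norm{h_r}_{L^\infty(\B^n)} = 1$. By Lemma \ref{lem:Wn_norm_for_rescaled_maps}, for small enough $r_0$ we have $\norm{Dh_r}_{L^n(\B^n)} \leq C(n,K,i(x_0,f))$ uniformly in $r \in (0, r_0]$. The remaining quantity is $\norm{\Sigma_r}_{L^{1+\eps}(\B^n)}$, which by a direct change of variables satisfies
\[
	\norm{\Sigma_r}_{L^{1+\eps}(\B^n)}^{1+\eps} = r^{n \eps} \int_{\B^n(x_0, r)} \Sigma^{1+\eps},
\]
and hence tends to $0$ as $r \to 0$; in particular it is uniformly bounded for $r \in (0, r_0]$ after possibly shrinking $r_0$. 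Thus all three norms can be bounded by a single constant $A = A(n, K, i(x_0,f), \Sigma, f, x_0)$ independent of $r$.

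Next, Lemma \ref{lem:qrval_local_holder} requires the technical condition $\eps < 1/(K-1)$. Since $\B^n(x_0, r_0)$ has finite measure, Hölder's inequality lets us replace $\eps$ by any smaller positive exponent, so we may assume without loss of generality that $\eps < 1/(K-1)$. By Lemma \ref{lem:hr_qrval_at_origin} every $h_r$ has a $(K, \Sigma_r)$-quasiregular value at $0$, so Lemma \ref{lem:qrval_local_holder} applied to $h_r$ with $y_0 = 0$ yields, for each $s \in (0, 1)$, a constant $C(n, A, \eps, s)$ independent of $r$ such that
\[
	\abs{h_r(x) - h_r(y)} \leq C(n, A, \eps, s) \, \abs{x - y}^{\eps/(1+\eps)}
\]
for all $x, y \in \B^n(0, s)$ and all $r \in (0, r_0]$.

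Since any compact subset of $\B^n$ is contained in $\overline{\B^n}(0, s)$ for some $s \in (0, 1)$, the above uniform Hölder estimate immediately gives that $\{h_r : r \in (0, r_0]\}$ is equicontinuous on each such compact set, proving local equicontinuity. The only real work is the bookkeeping ensuring that the input norms of Lemma \ref{lem:qrval_local_holder} are uniformly controlled in $r$; there is no genuine obstacle since each ingredient is already in place from the preceding lemmas.
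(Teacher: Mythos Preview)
The proposal is correct and follows essentially the same approach as the paper: assemble the three uniform bounds (the $L^\infty$-bound by construction, the $L^n$-bound on $Dh_r$ from Lemma~\ref{lem:Wn_norm_for_rescaled_maps}, and the $L^{1+\eps}$-bound on $\Sigma_r$ via the change-of-variables identity $\int_{\B^n}\Sigma_r^{1+\eps}=r^{n\eps}\int_{\B^n(x_0,r)}\Sigma^{1+\eps}$), reduce $\eps$ below $1/(K-1)$, and invoke Lemma~\ref{lem:qrval_local_holder}. The paper's proof does exactly this, with the cosmetic difference that it introduces an explicit $\eps'\in(0,\min(\eps,1/(K-1)))$ rather than phrasing the reduction as ``without loss of generality''.
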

	\begin{proof}
		Let $\eps' \in (0, \min(\eps, 1/(K-1)))$, and let $r_0$ be small enough that Lemma \ref{lem:Wn_norm_for_rescaled_maps} applies. Then for all $r \in (0, r_0)$, $\norm{h_r}_{L^\infty(\B^n)} = 1$ by definition, and $\norm{Dh_r}_{L^n(\B^n)} \leq  C(n, K, i(x_0, f))$ by Lemma \ref{lem:Wn_norm_for_rescaled_maps}. Moreover, we have
		\[
			\int_{\B^n} \Sigma_r^{1+\eps'} = \int_{\B^n} r^{n+n\eps'} \Sigma^{1+\eps'} \circ \iota_r
			\leq r_0^{n\eps'} \int_{\B^n(x_0, r)} \Sigma^{1+\eps'} < \infty,
		\]
		and thus $\norm{\Sigma_r}_{L^{1+\eps'}(\B^n)}$ is bounded independently of $r$. Thus, local equicontinuity follows from the uniform local H\"older continuity estimate given in Lemma \ref{lem:qrval_local_holder}. 
	\end{proof}
	
	With both total boundedness and local equicontinuity shown, the Arzela-Ascoli theorem yields us the following.
	
	\begin{cor}\label{cor:normal_family}
		Under the setting of Definition \ref{def:chosen_maps}, if $r_0 > 0$ is sufficiently small, then the family $\{h_r : r \in (0, r_0]\}$ is normal under local uniform convergence.
	\end{cor}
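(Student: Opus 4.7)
My plan is to deduce this corollary as a direct application of the Arzel\`a-Ascoli theorem, since both hypotheses required by that theorem have essentially been established in the preceding results. The target space is $\R^n$, which is a complete metric space, so the classical statement of Arzel\`a-Ascoli says that a family in $C(\B^n, \R^n)$ is normal (that is, relatively compact in the topology of local uniform convergence) if and only if it is pointwise precompact and locally equicontinuous.

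For pointwise precompactness, I would argue as follows. By the definition \eqref{eq:hr_def} of $h_r$, we have $\norm{h_r}_{L^\infty(\B^n)} = 1$ for every $r \in (0, r_0]$. In particular, for every fixed $x \in \B^n$, the set $\{h_r(x) : r \in (0, r_0]\}$ is contained in the closed ball $\overline{\B^n}(0, 1) \subset \R^n$, and is therefore precompact.

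For local equicontinuity, I would simply invoke Lemma \ref{lem:local_equicontinuity}, which asserts exactly this property for sufficiently small $r_0$. Thus, choosing $r_0$ small enough that Lemma \ref{lem:local_equicontinuity} applies, both hypotheses of Arzel\`a-Ascoli hold, and the conclusion follows.

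There is no real obstacle here since the work has all been done in the preceding lemmas; the only point to verify is that we are in the correct setting for Arzel\`a-Ascoli, namely that $\B^n$ is $\sigma$-compact (so local equicontinuity together with pointwise precompactness suffices for normality under local uniform convergence), which is standard. The proof is thus essentially a one-line citation of Arzel\`a-Ascoli combined with Lemma \ref{lem:local_equicontinuity} and the normalization $\norm{h_r}_{L^\infty(\B^n)} = 1$.
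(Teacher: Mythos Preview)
Your proposal is correct and matches the paper's approach exactly: the paper likewise deduces the corollary directly from Arzel\`a--Ascoli, using $\norm{h_r}_{L^\infty(\B^n)} = 1$ for boundedness and Lemma~\ref{lem:local_equicontinuity} for local equicontinuity.
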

	
	Thus, for a suitable sequence of rescalings $h_{r_j}$, there exists a limit map $h \colon \B^n \to \R^n$, with $h_{r_j} \to h$ locally uniformly. We then point out that the convergence can be upgraded to weak Sobolev convergence.
	
	\begin{lemma}\label{lem:weak_Sobolev_convergence}
		Let $h_j = h_{r_j}$, where $h_{r}$ are defined according to the setting of Definition \ref{def:chosen_maps}, and $r_j$ is a sequence of radii tending to 0. If $h_j \to h$ locally uniformly, then $h \in W^{1,n}(\B^n, \R^n)$, and a subsequence of $h_j$ converges to $h$ weakly in $W^{1,n}(\B^n, \R^n)$.
	\end{lemma}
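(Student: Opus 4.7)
The plan is to combine the uniform $W^{1,n}$-bound of the rescaled maps with reflexivity, and then identify the resulting weak limit as $h$ using the locally uniform convergence.

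First, I would establish that $\{h_j\}$ is bounded in $W^{1,n}(\B^n, \R^n)$. By Definition \ref{def:chosen_maps}, $\norm{h_j}_{L^\infty(\B^n)} = 1$, hence $\norm{h_j}_{L^n(\B^n)} \leq \abs{\B^n}^{1/n}$. Moreover, since $r_j \to 0$, we may assume each $r_j$ is small enough that Lemma \ref{lem:Wn_norm_for_rescaled_maps} applies, giving $\norm{Dh_j}_{L^n(\B^n)} \leq C(n, K, i(x_0, f))$. Together these yield a uniform bound in the $W^{1,n}$-norm.

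Since $W^{1,n}(\B^n, \R^n)$ is reflexive (being isometrically a closed subspace of the reflexive product $L^n \times L^n$ via $u \mapsto (u, Du)$), bounded sequences admit weakly convergent subsequences. I would extract a subsequence $h_{j_k}$ that converges weakly in $W^{1,n}(\B^n, \R^n)$ to some $\tilde h \in W^{1,n}(\B^n, \R^n)$.

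It remains to identify $\tilde h = h$. Weak convergence in $W^{1,n}(\B^n, \R^n)$ implies weak convergence in $L^n(\B^n, \R^n)$; in particular, for every test function $\varphi \in C_c^\infty(\B^n)$, $\int_{\B^n} h_{j_k} \varphi \to \int_{\B^n} \tilde h \varphi$. On the other hand, the locally uniform convergence $h_j \to h$, together with $\spt \varphi$ being compact in $\B^n$, yields $\int_{\B^n} h_{j_k} \varphi \to \int_{\B^n} h \varphi$ directly. By uniqueness of distributional limits, $\tilde h = h$ almost everywhere in $\B^n$, so $h \in W^{1,n}(\B^n, \R^n)$ and $h_{j_k} \rightharpoonup h$ weakly in $W^{1,n}(\B^n, \R^n)$. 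There is no real obstacle here; this is a routine application of reflexivity and identification of limits, and the substantive work has already been done in Lemmas \ref{lem:Wn_norm_for_rescaled_maps} and \ref{lem:local_equicontinuity}.
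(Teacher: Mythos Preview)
Your proof is correct and follows essentially the same approach as the paper: both establish the uniform $W^{1,n}$-bound via $\norm{h_j}_{L^\infty}=1$ and Lemma~\ref{lem:Wn_norm_for_rescaled_maps}, then use reflexivity to extract a weak subsequential limit and identify it with $h$ through the locally uniform convergence. The paper simply cites \cite[Proposition~VI.7.9]{Rickman_book} for the extraction and identification step that you have written out explicitly.
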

	\begin{proof}
		For all $j$, we have $\norm{h_j}_{L^n(\B^n)} \leq \abs{\B^n}^{1/n} \norm{h_j}_{L^\infty(\B^n)}  \leq \abs{\B^n}^{1/n}$, and for large enough $j$, Lemma \ref{lem:Wn_norm_for_rescaled_maps} yields that $\norm{Dh_j}_{L^n(\B^n)} \leq C$. Thus, $h_j$ is a bounded sequence in $W^{1,n}(\B^n, \R^n)$. Moreover, by local uniform convergence, $h_j \to h$ strongly in $L^p(\B^n(0, s), \R^n)$ for every $p \in [1, \infty)$ and $s \in (0, 1)$. Thus, the claim follows from e.g.\ \cite[Proposition VI.7.9]{Rickman_book}.
	\end{proof}
	
	\subsection{Quasiregularity of the limit}
	
	The next statement to prove is that the rescaling limit is $K$-quasiregular. For this, our strategy is similar in spirit to the one presented in e.g.\ \cite[Section VI.8]{Rickman_book}. We begin by pointing out a convergence result for the $L^n$-norms of the approximating functions.
	
	\begin{lemma}\label{lem:Df_convergence}
		Let $h_j, h \in W^{1,n}(\B^n, \R^n)$ be such that $h_j \rightharpoonup h$ weakly in $W^{1,n}(\B^n, \R^n)$. Then for any bounded measurable function $\psi \colon \B^n \to [0, M]$, $M > 0$, we have
		\[
			\int_{\B^n} \psi \abs{Dh}^n \leq \liminf_{j \to \infty} \int_{\B^n} \psi \abs{Dh_j}^n
		\]
	\end{lemma}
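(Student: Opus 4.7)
The plan is to recognize this as a standard weak lower semicontinuity result for convex integrands. Specifically, I would consider the functional
\[
	F \colon W^{1,n}(\B^n, \R^n) \to [0, \infty), \qquad F(u) = \int_{\B^n} \psi \abs{Du}^n,
\]
and show that $F$ is convex and strongly lower semicontinuous, hence automatically weakly lower semicontinuous, which gives the claim.

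First I would verify convexity of $F$. The operator norm $A \mapsto \abs{A}$ is a norm on the space of $n \times n$ matrices, and $t \mapsto t^n$ is convex and nondecreasing on $[0, \infty)$ for $n \geq 1$. Thus $A \mapsto \abs{A}^n$ is convex. Since $\psi \geq 0$ pointwise, the integrand $(x, A) \mapsto \psi(x) \abs{A}^n$ is convex in $A$ for a.e.\ $x$, and integrating preserves convexity; hence $F$ is convex on $W^{1,n}(\B^n, \R^n)$.

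Next I would check that $F$ is strongly lower semicontinuous. If $u_k \to u$ in $W^{1,n}$, then $Du_k \to Du$ in $L^n(\B^n)$, and along a subsequence we may assume $Du_k \to Du$ pointwise a.e.\ in $\B^n$. Since $\psi \abs{Du_k}^n$ is nonnegative, Fatou's lemma yields $F(u) \leq \liminf_k F(u_k)$, establishing strong lower semicontinuity along the chosen subsequence; since this applies to any subsequence of the original sequence, the full liminf inequality follows by a standard subsequence argument.

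With convexity and strong lower semicontinuity in hand, the final step is to invoke the classical fact that a convex, strongly lower semicontinuous functional on a Banach space is weakly lower semicontinuous. This is a direct consequence of Mazur's lemma: if $h_j \rightharpoonup h$ in $W^{1,n}(\B^n, \R^n)$, then finite convex combinations $\tilde h_j = \sum_{k \geq j} \lambda_{j,k} h_k$ can be chosen so that $\tilde h_j \to h$ strongly, and then
\[
	F(h) \leq \liminf_{j \to \infty} F(\tilde h_j) \leq \liminf_{j \to \infty} \sum_{k \geq j} \lambda_{j,k} F(h_k) \leq \liminf_{j \to \infty} F(h_j),
\]
where the middle step uses convexity and the last step uses that convex combinations of tails are bounded above by the supremum of the tail. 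This completes the proof. I do not expect any serious obstacle here; the only delicate point is making sure the convexity argument applies to the \emph{operator} norm $\abs{\cdot}$ rather than some differentiable matrix norm, but this follows from the general fact that any norm raised to the power $n \geq 1$ is convex.
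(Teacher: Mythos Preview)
Your approach is correct in spirit and differs from the paper's. The paper argues more directly: weak convergence $h_j \rightharpoonup h$ in $W^{1,n}$ implies $Dh_j \rightharpoonup Dh$ weakly in the weighted space $L^n((\B^n,\psi\,\vol_n),\R^{n\times n})$ (since $\psi$ is bounded, any element of the dual of the weighted space, after multiplication by $\psi$, lies in the dual of the unweighted $L^n$), and then simply invokes weak lower semicontinuity of the norm in a Banach space via Hahn--Banach. Your route through convexity of the integrand, strong lower semicontinuity via Fatou, and Mazur's lemma is an equally standard proof of essentially the same abstract fact; the paper's version is a little shorter because it packages the convexity argument inside the known weak lsc of norms, while yours is more self-contained.

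One genuine slip in your Mazur step: bounding a convex combination by the supremum over the tail yields
\[
\liminf_{j\to\infty}\sum_{k\ge j}\lambda_{j,k}F(h_k)\;\le\;\liminf_{j\to\infty}\,\sup_{k\ge j}F(h_k)\;=\;\limsup_{j\to\infty}F(h_j),
\]
not $\liminf_j F(h_j)$ as you wrote. The standard repair is to first pass to a subsequence $(h_{j_m})$ along which $F(h_{j_m})\to\liminf_j F(h_j)$, and apply Mazur's lemma to \emph{that} subsequence; then the supremum over the tail converges to the same limit and the desired inequality follows. With this correction your argument goes through.
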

	\begin{proof}
		If $h_j \rightharpoonup h$ weakly in $W^{1,n}(\B^n, \R^n)$, then it is easily seen that $Dh_j \rightharpoonup Dh$ weakly in the weighted $L^n$-space $L^n((\B^n, \psi \vol_n), \R^n \times \R^n)$. The claim then follows by a standard argument involving the Hahn-Banach theorem, see e.g.\ \cite[Proposition 2.3.5]{Heinonen-Koskela-Shanmugalingam-Tyson_Book}.
	\end{proof}
	
	We then point out a similar convergence result for the integrals of the Jacobians of the approximating functions.
	
	\begin{lemma}\label{lem:Jf_convergence}
		Let $h_j, h \in W^{1,n}(\B^n, \R^n)$ be such that $h_j \rightharpoonup h$ weakly in $W^{1,n}(\B^n, \R^n)$. Then for any $\phi \in C^\infty_0(\B^n, \R^n)$ 
		\[
			\int_{\B^n} \phi J_h = \lim_{j \to \infty} \int_{\B^n} \phi J_{h_j}.
		\]
	\end{lemma}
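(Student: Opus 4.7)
My plan is to establish, by induction on $k \in \{1, \ldots, n\}$, the stronger statement that every $k \times k$ subdeterminant $M(Dh_j)$ of the Jacobian matrix converges to $M(Dh)$ weakly in $L^{n/k}(\B^n)$. The base case $k = 1$ is just weak convergence of $Dh_j$ to $Dh$ in $L^n(\B^n)$, which is equivalent to the assumed weak $W^{1,n}$-convergence of $h_j$ to $h$. Taking $k = n$ then gives $J_{h_j} \rightharpoonup J_h$ weakly in $L^1(\B^n)$, and pairing with the bounded compactly supported test function $\phi$ yields the desired limit.

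The inductive step rests on the null Lagrangian (Piola) identity. For a component multi-index $J = (j_1, \ldots, j_k)$ and a coordinate multi-index $I = (i_1, \ldots, i_k)$, the identity
\[
dh^{j_1} \wedge dh^{j_2} \wedge \cdots \wedge dh^{j_k} = d\bigl( h^{j_1} \, dh^{j_2} \wedge \cdots \wedge dh^{j_k} \bigr),
\]
which is a direct consequence of $d(dh^j) = 0$, translates componentwise into the distributional formula
\[
M^I_J(Dh) = \sum_{l=1}^{k} (-1)^{l+1} \partial_{i_l}\bigl( h^{j_1} M^{I_l}_{J_l}(Dh) \bigr),
\]
where $M^{I_l}_{J_l}$ denotes the $(k-1) \times (k-1)$ minor obtained by deleting column $i_l$ and row $j_1$. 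Testing this identity against $\psi \in C^\infty_0(\B^n)$ and integrating by parts produces
\[
\int_{\B^n} \psi\, M^I_J(Dh_j) = -\sum_{l=1}^{k} (-1)^{l+1} \int_{\B^n} \partial_{i_l}\psi \cdot h^{j_1}_j \, M^{I_l}_{J_l}(Dh_j).
\]

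To pass to the limit on the right, I would combine two ingredients: the Rellich-Kondrachov embedding $W^{1,n}(\B^n) \hookrightarrow L^r(\B^n)$ is compact for every $r < \infty$, yielding $h^{j_1}_j \to h^{j_1}$ strongly in $L^r$; and the inductive hypothesis gives $M^{I_l}_{J_l}(Dh_j) \rightharpoonup M^{I_l}_{J_l}(Dh)$ weakly in $L^{n/(k-1)}$. Taking $r$ large enough that $1/r + (k-1)/n < 1$, the standard strong-times-weak product argument yields weak convergence of $h^{j_1}_j M^{I_l}_{J_l}(Dh_j)$ in some $L^q$ with $q > 1$, and pairing with the bounded compactly supported $\partial_{i_l}\psi$ closes the induction. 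The main technical point I expect to verify carefully is the distributional Piola identity above for merely $W^{1,n}$-regular $h$; this is classical (in the tradition of Morrey, Reshetnyak, and Ball), and can be justified by mollifying $h$, applying the pointwise identity for smooth maps, and passing to the distributional limit using only the $L^n$-integrability of $Dh$.
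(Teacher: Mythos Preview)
Your proposal is correct and gives the classical proof of weak continuity of Jacobians via induction on minors and the null-Lagrangian (Piola) identity; this is precisely the approach the paper defers to by citing \cite[(7.9)--(7.11)]{Iwaniec-Martin_book}, so the two are essentially the same. One small imprecision: at the top level $k=n$ your argument does not actually yield weak convergence in $L^1(\B^n)$ (boundedness in $L^1$ gives no weak compactness), but only convergence against $C^\infty_0$ test functions---which is exactly what the lemma states, so this does not affect the conclusion.
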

	\begin{proof}
		See e.g.\ \cite[(7.9)-(7.11)]{Iwaniec-Martin_book}; there, the result is given for the distributional Jacobian, but naturally the integral of $\phi J_u$ coincides with the image of $\phi$ under the distributional Jacobian of $u$ for $u \in W^{1,n}_\loc(\Omega, \R^n)$.
	\end{proof}
	
	With these two results, we can then show the $K$-quasiregularity of the rescaling limit.
	
	\begin{lemma}\label{lem:limit_is_QR}
		Let $h_j = h_{r_j}$, where $h_{r}$ are defined according to the setting of Definition \ref{def:chosen_maps}, and $r_j$ is a sequence of radii tending to 0. If $h_j \to h$ locally uniformly, then $h$ is $K$-quasiregular.
	\end{lemma}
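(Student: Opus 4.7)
The plan is to pass the quasiregular-value inequality to the limit against arbitrary nonnegative test functions. By Lemma \ref{lem:weak_Sobolev_convergence} we may pass to a subsequence so that $h_j \to h$ both locally uniformly and weakly in $W^{1,n}(\B^n, \R^n)$; in particular $h \in W^{1,n}(\B^n, \R^n)$ is continuous. The goal is to show that
\[
	\abs{Dh(x)}^n \leq K J_h(x) \qquad \text{for a.e.\ } x \in \B^n.
\]

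Fix an arbitrary nonnegative $\psi \in C_0^\infty(\B^n)$. By Lemma \ref{lem:hr_qrval_at_origin}, each $h_j$ satisfies the pointwise inequality $\abs{Dh_j}^n \leq K J_{h_j} + \abs{h_j}^n \Sigma_{r_j}$ a.e.\ in $\B^n$. Multiplying by $\psi$ and integrating yields
\[
	\int_{\B^n} \psi \abs{Dh_j}^n \leq K \int_{\B^n} \psi J_{h_j} + \int_{\B^n} \psi \abs{h_j}^n \Sigma_{r_j}.
\]
I will handle the three resulting limits one by one. For the left-hand side, the weak-convergence lower semicontinuity stated in Lemma \ref{lem:Df_convergence} gives
\[
	\int_{\B^n} \psi \abs{Dh}^n \leq \liminf_{j \to \infty} \int_{\B^n} \psi \abs{Dh_j}^n.
\]
For the Jacobian term, Lemma \ref{lem:Jf_convergence} yields $\int_{\B^n} \psi J_{h_j} \to \int_{\B^n} \psi J_h$. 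For the error term, I use the key normalization $\norm{h_j}_{L^\infty(\B^n)} = 1$ from Definition \ref{def:chosen_maps}, which bounds $\abs{h_j}^n \leq 1$ pointwise; combined with the convergence $\norm{\Sigma_{r_j}}_{L^1(\B^n)} \to 0$ from Lemma \ref{lem:hr_qrval_at_origin}, this gives
\[
	0 \leq \int_{\B^n} \psi \abs{h_j}^n \Sigma_{r_j} \leq \norm{\psi}_{L^\infty(\B^n)} \norm{\Sigma_{r_j}}_{L^1(\B^n)} \xrightarrow[j \to \infty]{} 0.
\]

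Combining the three limit estimates, I obtain that for every nonnegative $\psi \in C_0^\infty(\B^n)$,
\[
	\int_{\B^n} \psi \abs{Dh}^n \leq K \int_{\B^n} \psi J_h.
\]
Since $\psi$ is arbitrary in the nonnegative cone of $C_0^\infty(\B^n)$, a standard density argument yields the pointwise inequality $\abs{Dh}^n \leq K J_h$ a.e.\ in $\B^n$. Together with $h \in W^{1,n}(\B^n, \R^n) \cap C(\B^n, \R^n)$, this is precisely the analytic definition of $K$-quasiregularity, completing the proof. There is no real obstacle here: all the analytic ingredients — weak lower semicontinuity of the $n$-energy, weak continuity of the Jacobian, the $L^\infty$-normalization of the rescalings, and the decay of the rescaled weights $\Sigma_{r_j}$ — have already been set up in the preceding lemmas, so the argument reduces to a routine test-function passage to the limit.
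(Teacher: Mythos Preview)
Your proof is correct and follows essentially the same route as the paper: pass to a weakly convergent subsequence, test the rescaled quasiregular-value inequality against nonnegative $\psi \in C_0^\infty(\B^n)$, and use Lemmas \ref{lem:Df_convergence}, \ref{lem:Jf_convergence}, and \ref{lem:hr_qrval_at_origin} together with the normalization $\abs{h_j}\leq 1$ to kill the error term. The only cosmetic difference is that the paper spells out the passage from the integral inequality to the pointwise one by approximating characteristic functions of balls and invoking Lebesgue differentiation, whereas you invoke a ``standard density argument'' --- which is perfectly acceptable here.
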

	\begin{proof}
		By moving to a subsequence, we may assume by Lemma \ref{lem:weak_Sobolev_convergence} that $h_j \rightharpoonup h$ weakly in $W^{1,n}(\B^n, \R^n)$. Let $\phi \in C^\infty_0(\B^n)$ with $\phi \geq 0$ everywhere. We use Lemmas \ref{lem:Df_convergence}, \ref{lem:Jf_convergence}, and \ref{lem:hr_qrval_at_origin} along with the fact that $\abs{h_j} \leq 1$ to conclude that
		\begin{multline*}
			\int_{\B^n} \left( \abs{Dh}^n - K J_h \right) \phi
			\leq \liminf_{j \to \infty } \int_{\B^n} \left( \abs{Dh_j}^n - K J_{h_j} \right) \phi\\
			\leq \liminf_{j \to \infty } \int_{\B^n} \phi \abs{h_j}^n \Sigma_{r_j}
			\leq \liminf_{j \to \infty} \norm{\phi}_{L^\infty(\B^n)}  \norm{\Sigma_{r_j}}_{L^1(\B^n)} = 0.
		\end{multline*}
		
		It follows from this that $\abs{Dh(x)}^n - K J_h(x) \leq 0$ for a.e.\ $x \in \R^n$. Indeed, if $B$ is a ball such that $2B \subset \B^n$, then we may select $\phi_j \in C^\infty_0(\B^n, [0, 1])$ such that $\phi_j \equiv 1$ on $B$ and $\spt \phi_j \subset (1+j^{-1})B$. By applying the previous estimate for these test functions $\phi_j$, it follows that
		\[
			\int_{B} \left( \abs{Dh}^n - K J_h \right) \leq \liminf_{j \to 0}  \int_{[(1+j^{-1})B] \setminus B} \abs{K J_h - \abs{Dh}^n} = 0.
		\]
		Since this holds for all balls $B$ with $2B \subset \B^n$, we obtain that $\abs{Dh(x)}^n - K J_h(x) \leq 0$ for a.e.\ $x \in \B^n$ by the Lebesgue differentiation theorem.
	\end{proof}
	
	\subsection{Non-degeneracy of the limit}
	
	We recall the statement of Theorem \ref{thm:rescaling}.
	
	\begin{customthm}{\ref{thm:rescaling}}
		Let $\Omega \subset \R^n$ be a domain and  $x_0 \in \Omega$. Suppose that $f \colon \Omega \to \R^n$ is a non-constant continuous map having a $(K, \Sigma)$-quasiregular value at $f(x_0)$, where $K \geq 1$ and $\Sigma \in L^{1+\eps}_\loc(\Omega)$ with $\eps > 0$. Then there exist radii $r_j > 0$ and scaling factors $c_j > 0$, $j \in \Z_{> 0}$, such that the functions $h_j \colon \B^n \to \R^n$ defined by
		\[
			h_j(x) = c_j(f(x_0 + r_j x) - f(x_0))
		\]
		converge both locally uniformly and weakly in $W^{1,n}(\B^n, \R^n)$ to a non-constant $K$-quasiregular map $h \colon \B^n \to \R^n$.
	\end{customthm}
	
	Almost all the stated properties for the rescaling process have already been proven. However, one last absolutely crucial detail remains, which is to show that the obtained limit map is non-constant; indeed, without this detail, the statement of Theorem \ref{thm:rescaling} would be trivial, since any continuous map can be rescaled in a way that makes it converge locally uniformly to a constant map. We hence complete the proof of Theorem \ref{thm:rescaling} by showing this fact.
	
	\begin{proof}[Proof of Theorem \ref{thm:rescaling}]
		We let $g_r$ and $h_r$, $r \in (0, r_0]$, be as in the Definition \ref{def:chosen_maps}. We choose our sequence of $r_j$ strategically: we use Lemma \ref{lem:Ufs_are_strong_normal} to select $\rho_j \to 0$ small enough that $U_f(x_0, \rho_j)$ are strong normal neighborhoods of $x_0$, and then select $r_j = l_f^*(x_0, \rho_j)$. Since $f^{-1}\{f(x_0)\}$ is discrete, we have $r_j \to 0$. Moreover, our strategic selection ensures that by Lemma \ref{lem:lin_inv_dil_cancel}, we have $\smallnorm{g_{r_j}}_{L^\infty(\B^n)} = L_f(x_0, r_j) = \rho_j$.
		
		Let then $h_j = h_{r_j}$. By Corollary \ref{cor:normal_family}, after moving to a subsequence, we have $h_j \to h$ locally uniformly in $\B^n$. By Lemma \ref{lem:weak_Sobolev_convergence}, after moving to a further subsequence, we have $h_j \rightharpoonup h$ weakly in $W^{1,n}(\B^n, \R^n)$. Now, with Lemma \ref{lem:limit_is_QR}, we obtain that $h$ is $K$-quasiregular. It remains to show that $h$ is non-constant.
		
		For this, we first consider the case that, for a given $s \in (0, 1)$, we have $h_j \B^n(0, s) \setminus \B^n(0, 1/4) \neq \emptyset$ for infinitely many $j$. Then, by the uniform convergence of $h_j$ to $h$ on $\overline{\B^n}(0, s)$ and the compactness of $\overline{\B^n}(0, s)$, we find a point $x \in \overline{\B^n}(0, s)$ such that $\abs{h(x)} \geq 1/4$. Since $h(0) = 0$, it follows that in this case, $h$ is non-constant.
		
		Therefore, we assume towards contradiction that for every $s \in (0, 1)$, we have $h_j \B^n(0, s) \subset \B^n(0, 1/4)$ for all but finitely many $j$. We fix $s$ and denote $U_j^1 = U_f(x_0, \rho_j/2)$, and $U_j^2 = U_f(x_0, \rho_j/4)$ for brevity, with $\overline{U_j^2} \subset U_j^1$; see Figure \ref{fig:non-const_setup} for an illustration. By moving to a subsequence in $j$, we may assume that $h_j (\B^n(0, s)) \subset \B^n(0, 1/4)$ for every $j$.  This in turn implies that $f \B^n(x_0, s r_j) \subset \B^n(f(x_0), \rho_j/4)$, and consequently, since $\B^n(x_0, s r_j)$ is connected, we have $\B^n(x_0, s r_j) \subset U_j^2$ for every $j$.
		
		\begin{figure}[h]
			\begin{tikzpicture}[scale = 1]
				\draw[fill=lightgray!20]
				plot [smooth cycle,tension=0.5] 
				coordinates {
					(-2.6, 0) (-2.4, 2.7) (0, 2.6) (2, 2) 
					(2.4, 0.3) (2.3, -1.6) (0, -2.4) (-2.3, -2.5)
				};
				\draw [fill=white] 
				plot [smooth cycle,tension=0.5] 
				coordinates{
					(-1.20, -2.1) (-1.20, -1.7) (-0.90, -1.8) (-1.00, -2.1)
				};
				\draw [fill=white] 
				plot [smooth cycle,tension=0.5] 
				coordinates{
					(-1.8, 2.3) (-1.8, 1.9) (-1.4, 2.0) (-1.5, 2.3)
				};
				
				\draw[fill=lightgray!50]
				plot [smooth cycle,tension=0.5] 
				coordinates {
					(-2.2, 0) (-2, 1.5) (0, 2.2) (2, 1.3) 
					(2.2, 0) (2, -1.4) (0, -2.2)  (-1, -1.2) (-2, -1.7)
				};
				\draw [fill=white] 
				plot [smooth cycle,tension=0.5] 
				coordinates{
					(-1.3, 1.4) (-1.4, 1.2) (-0.9, 1.1) (-0.8, 1.4)
				};
				\draw [fill=lightgray!20]
				plot [smooth cycle,tension=0.5] 
				coordinates{
					(-1.3, 1.4) (-1.4, 1.2) (-0.9, 1.1) (-0.8, 1.4)
				};
				
				\draw[fill=gray!50] 
				plot [smooth cycle,tension=0.5] 
				coordinates {
					(-0.9, 0) (-0.7, 1) (0.9, 0.8) (0.82, 0) (0.9, -0.9) (-0.8, -0.8)
				};

				\draw (0.2,0) node[anchor=south east] {$U_j^2$};
				\draw (-1.4,0) node[anchor=center] {$U_j^1$};

				\draw[dashed] (0,0) circle (0.8);
				\draw[dashed] (0,0) circle (2);
				
				\draw (0,0) -- (1.2,0) node[anchor=south] {$r_j$} -- (2, 0);
				\draw (0,0) -- (0.57,-0.57) node[anchor=east] {$sr_j$};

				\draw[fill=lightgray!20] (7.5,0) circle (3);
				\draw[fill=lightgray!50] (7.5,0) circle (1.5);
				\draw[fill=gray!50] (7.5,0) circle (0.75);
				
				\draw (7.5,0) -- (8.25,0);
				\draw (8.25, -0.1) node[anchor=south east] {$\rho_j/4$};
				\draw (7.5,0) -- (7.5,-1.5);
				\draw (7.5, -1.4) node[anchor=south west] {$\rho_j/2$};
				\draw (7.5,0) -- (7.5-2.6,-1.5);
				\draw (7.5-2.1, -1.25) node[anchor=south] {$\rho_j$};
				
				\draw[->] (3,0) -- (3.5,0) node[anchor=south] {$f$} -- (4, 0);		
			\end{tikzpicture}
			\caption{Example illustration of various objects in the proof of Theorem \ref{thm:rescaling}. The counterassumption $h_j \B^n(0, s) \subset \B^n(0, 1/4)$ implies that $U_j^2$ contains the inner circle, while $L_f(x_0, r_j) = \rho_j$ and Lemma \ref{lem:outer_distortion_lemma_with_QRvar} imply that the unbounded component of $\R^n \setminus U_j^1$ meets the outer circle.}\label{fig:non-const_setup}
		\end{figure}
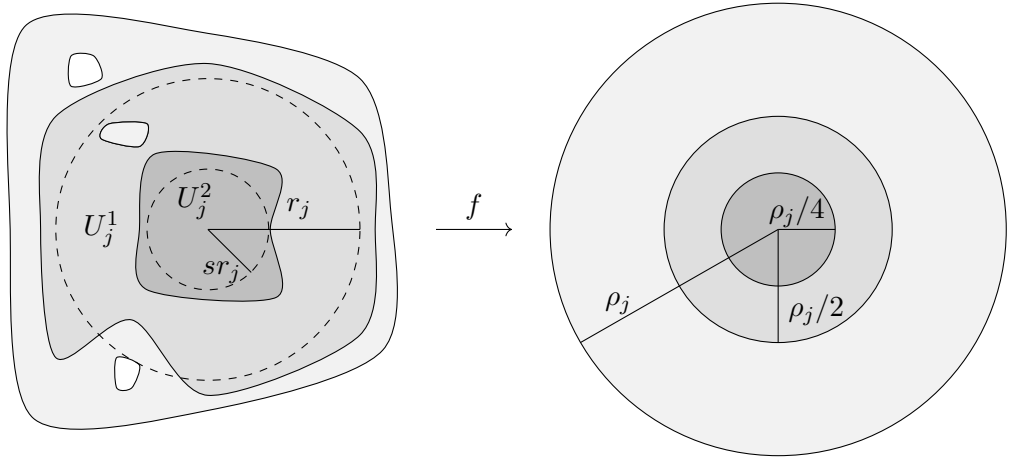
		
		On the other hand, since $L_f(x_0, r_j) = \rho_j$, an application of Lemma \ref{lem:outer_distortion_lemma_with_QRvar} with $\alpha = \sqrt[3]{2}$ yields that, after moving to a further subsequence, we may assume that $\partial \B^n(x_0, r_j)$ meets the unbounded component $C^1_j$ of $\R^n \setminus U^1_j$ for every $j$. Thus, $\diam(U_j^2) \geq \diam(\B^n(x_0, s r_j)) = 2 s r_j$, $\diam (C_j^1) = \infty$, and $\dist(U_j^2, C_j^1) \leq \dist(\B^n(x_0, s r_j), C_j^1 \cap \partial \B^n(x_0, r_j)) = (1-s)r_j$. We may thus use the Loewner property from Lemma \ref{lem:Loewner} and the monotonicity of capacity from \eqref{eq:capac_monotonicity} to conclude that
		\begin{multline}\label{eq:capacity_lower_bound}
			\capac(\overline{U_j^2}, U_j^1)
			\geq \capac(\overline{U_j^2}, \R^n \setminus C^1_j)\\ 
			\geq C(n) \log\frac{2s r_j}{(1-s) r_j} = C(n) \log\frac{2s}{1-s}
		\end{multline}
		for all $j$. Note that this lower capacity bound is independent of $j$, and tends to $\infty$ as $s \to 1$.
		
		But now, we may again apply Lemma \ref{lem:basic_adm_function_estimate} to the condenser $(\overline{U_j^2}, U_j^1)$. Combining this with \eqref{eq:qrval_def} and Lemma \ref{lem:level_set_lemma} \ref{enum:level_set_integral_over_U}, we obtain that
		\begin{multline*}
			\capac(\overline{U_j^2}, U_j^1)
			\leq \frac{1}{\log^n 2} \int_{U_j^1 \setminus U_j^2} \frac{\abs{Df}^n}{\abs{f - f(x_0)}^n}\\
			\leq \frac{1}{\log^n 2} \int_{U_j^1 \setminus U_j^2} \left( \frac{K J_f}{\abs{f - f(x_0)}^n} - \Sigma \right)
			\leq \frac{C(n) K i(x_0, f)}{\log^{n-1} 2} + \frac{1}{\log^{n} 2}\int_{U_j} \Sigma.
		\end{multline*}
		This upper bound in particular is independent of $s$, and tends to a finite value of $C(n) K i(x_0, f) \log^{-(n-1)}(2)$ as $j \to \infty$. Thus, by selecting $s$ close enough to 1 and a sufficiently large $j$, we obtain a contradiction with \eqref{eq:capacity_lower_bound}, completing the proof. 
	\end{proof}
	
	\section{Proof of the small $K$ -theorem}
	
	In this very brief section, we prove Theorem \ref{thm:small_K}. We again first recall the statement.
	
	\begin{customthm}{\ref{thm:small_K}}
		Let $\Omega \subset \R^n$ be a domain, let $x_0 \in \Omega$, and let $f \in W^{1,n}_\loc(\Omega, \R^n)$ be a non-constant continuous map. Suppose that $f$ has a $(K, \Sigma)$-quasiregular value at $f(x_0)$, where $\Sigma \in L^{1+\eps}_\loc(\Omega)$ with $\eps > 0$. If $K < K_0(n)$, where $K_0(n)$ is Rajala's constant, then $i(x_0, f) = 1$.
	\end{customthm}
	
	\begin{proof}
		Let $h_j$ be the rescalings of $f$ from Theorem \ref{thm:rescaling}, in which case $h_j$ converge to a non-constant $K$-quasiregular $h$ locally uniformly in $\B^n$. By the topological invariance of the local index, we see that $i(0, h_j) = i(x_0, f)$ for every $j$. On the other hand, by the small $K$ -theorem, $h$ is an orientation-preserving local homeomorphism, and thus $i(0, h) = 1$.
		
		Since $h$ is quasiregular, we may select $\rho \in (0, 1)$ such that $U_h(0, \rho)$ is a normal neighborhood of $0$, and $r \in (0, 1)$ such that $\B^n(0, r) \subset U_h(0, \rho)$. In particular, since $h^{-1}\{0\} \cap U_h(0, \rho) = \{0\}$, we obtain that the distance $D = \dist(0, h \partial \B^n(0, r))$ is positive. We select $j$ large enough that $\B^n(x_0, r_j)$ is contained in a normal neighborhood $U_0$ of $x_0$ with respect to $f$, and also large enough that local uniform convergence yields us $\abs{h_j - h} < D/2$ uniformly on $\partial \B^n(0, r)$.
		
		Now, we can define a line homotopy $H \colon \overline{\B^n}(0, r) \times [0,1] \to \R^n$ from $h$ to $h_j$, and by the fact that $\abs{h_j - h} < D/2$ on $\partial \B^n(0, r)$, we see that $0 \notin H(\partial \B^n(0, r) \times [0,1])$. Thus, by the homotopy invariance of the degree given in Lemma \ref{lem:top_degree_props} \eqref{enum:deg_homotopy}, we have
		\[
			\deg(h, 0, \B^n(0, r)) = \deg(h_j, 0, \B^n(0, r)).
		\]
		However, we may now use the local index summation formula from Lemma \ref{lem:local_index_summation} to conclude that
		\[
			i(x_0, f) = i(0, h_j) = \deg(h_j, 0, \B^n(0, r)) = \deg(h, 0, \B^n(0, r)) = i(0, h) = 1,
		\]
		completing the proof.
	\end{proof}
	
	%%%%%%%%%%%%%%%%%%%%%%%%%%%%%%%%%%%%%%% Bibliography %%%%%%%%%%%%%%%%%%%%%%%%%%%%%%%%%%%%%%%

\end{document}